\newcommand{\weight}{degree}
\title{Determinants of Subquotients of Galois Representations Associated to Abelian Varieties}
\author{Eric Larson and Dmitry Vaintrob}
\date{}
\begin{document}
\maketitle

\begin{abstract}
Given an abelian variety $A$
of dimension $g$ over a number field~$K$,
and a prime $\ell$,
the $\ell^n$-torsion points of $A$ give rise to a representation
$\rho_{A, \ell^n} \colon \gal(\bar{K} / K) \to \gl_{2g}(\zz/\ell^n\zz)$.
In particular, we get a \emph{mod-$\ell$ representation}
$\rho_{A, \ell} \colon \gal(\bar{K} / K) \to \gl_{2g}(\ff_\ell)$
and an \emph{$\ell$-adic representation}
$\rho_{A, \ell^\infty} \colon \gal(\bar{K} / K) \to \gl_{2g}(\zz_\ell)$.
In this paper, we describe
the possible determinants of subquotients
of these two representations.
These two lists turn out to be remarkably similar.

Applying our results in dimension $g=1$, we recover
a generalized version of a theorem of Momose on
isogeny characters of elliptic curves over number fields,
and obtain, conditionally on the Generalized Riemann
Hypothesis, a generalization of Mazur's bound on
rational isogenies of prime degree to number fields.
\end{abstract}

\section{Introduction}

Let $A$ be a $g$-dimensional abelian variety over a number field $K$.
The \emph{$\ell$-adic Tate module}
\[A[\ell^\infty] := \varprojlim_n A[\ell^n]\]
is the limit of $\ell$-power torsion points over $\bar{K}$.
It is a $\zz_\ell$-lattice with action by the Galois group
$G_K:=\gal(\bar{K}/K)$, and is one of the fundamental
examples of a Galois representation.

We study in this paper one-dimensional Galois characters which can arise
from these representations. Namely, we consider
determinants of subquotients
of the $\ell$-adic Tate
module of $A$ with scalars extended from $\zz_\ell$
to either $\ell$-adic fields
or finite fields $\ff_{\ell^n}$.
Any such determinant character with values in a field $k$
appears after extending scalars all the way to
$\bar{k}$. Hence studying these determinant characters with
scalars extended to $\bar{\qq}_\ell$ gives all
such characters with values in an $\ell$-adic field,
and extending scalars to $\bar{\ff}_\ell$ gives
all such characters with values in
$\ff_{\ell^n}$.

If $V$ is a representation of a group $G$
over a field $k$,
we say that $\psi \colon G \to \bar{k}$ is an
\emph{associated character} of {\weight} $d$ of $V$
if there is a $d$-dimensional subquotient $W$ of $V \otimes_k \bar{k}$
such that $\psi = \det_{\bar{k}} W$.
We call our principal objects of study --- the
associated characters of $A[\ell^\infty] \otimes \qq_\ell$
and $A[\ell^\infty] \otimes \ff_\ell = A[\ell]$ --- the
\emph{$\ell$-adic} and \emph{mod-$\ell$}
associated characters of $A$ respectively.

The study of associated characters of these kinds goes back to Serre's
foundational work on the Open Image Theorem,
which states that
for an elliptic curve $E$ without complex multiplication, the
action of the absolute Galois group of a number field
on the ad\`elic Tate module $H_1(E, \hat{\zz}) = \prod_\ell E[\ell^\infty]$
is open (i.e.\ has finite index) in
$\gl_2(\hat{\zz})$.
This is proved in two principal steps.
First, in \cite{abelianladic}, Serre shows that the $\ell$-adic image
$\rho_{E, \ell^\infty}\colon G_K\to \gl_2(\zz_\ell)$
has finite index for all $\ell$.
Second, in \cite{serre},
Serre shows that for sufficiently large primes,
the mod-$\ell$ image
$\rho_{E, \ell} \colon G_K\to \gl_2(\zz_\ell)$
is surjective.
In each case, the proof consists of reducing the problem to
the study of the $\ell$-adic and mod-$\ell$ associated characters
of $E$ respectively, i.e.\ studying the Galois action on the $1$-dimensional
subquotients.

A major conjecture, which is still open, is whether the index
of the image of $G_K$ in $\gl_2(\hat{\zz})$ in Serre's theorem is
bounded \emph{uniformly} in $E$.
The first step towards this
result, for $K = \qq$, is Mazur's seminal theorem on isogenies \cite{mazur}.
This theorem is equivalent to the statement that, for an elliptic curve
$E$ over $\qq$ and for a prime $\ell > 163$, the $\ell$-torsion
module $E[\ell]$ is irreducible (equivalently, no
isogenies $E\to E'$ defined over $\qq$ have kernel of order $\ell$).
An essential step of Mazur's proof is to analyze
the possible associated characters (up to torsion of small degree)
of subquotients
of $E[\ell]$, and show that for $\ell > 163$
the list of possible associated characters is empty.

Momose in \cite{momose} gives an exhaustion 
(i.e.\ a list containing all possibilities, perhaps with excess)
for the mod-$\ell$ associated characters
of elliptic curves over number fields $K$
attached to subquotients of $E[\ell]$, for $\ell$ sufficiently large
depending on $K$.
When $K$ is quadratic,
either real, or imaginary of class number greater than one
(i.e.\ as long as
$K \neq \qq[\sqrt{D}]$ for
$D \in \{-1, -2, -3, -7, -11, -19, -43, -67, -163\}$),
the list of possible associated characters is empty.
In particular, any elliptic curve $E$ over such
a quadratic field $K$ has irreducible torsion module $E[\ell]$ (equivalently,
admits no $\ell$-isogenies) as long as $\ell>C_K$ for some constant $C_K$
that depends only on $K$.

The main theorem of our paper gives an analogous exhaustion for abelian varieties
of dimension $g$ over $K$.
When applied to elliptic curves, we obtain slightly stronger versions of
the above results of Momose (see Theorem~\ref{introellcurve}
and Corollary~\ref{cor:introisogeny} below).
While it is hopeless to classify all proper subquotients
of $A[\ell]$ for $g > 1$ (for $A$
decomposable, for example, this would involve 
classifying all Galois representations coming from elliptic curves), giving
a complete characterization of their determinants is a more manageable
task --- and this is the question we study in this paper.

Serre's Open Image Theorem is the beginning of a larger story
about the Galois representations $A[\ell]$.
By Faltings' Finiteness Theorem \cite{faltings} (applied
to both $A$ and $A \times A$), we know that
if $\mend_K(A) = \zz$, then the representations
$A[\ell]$ are absolutely irreducible for $\ell$ larger
than some constant depending on $A$. As with Serre's theorem,
conjecturally there should exist a uniform bound.
Our main result implies this conjecture
for a large class of fields, conditionally on the Generalized
Riemann Hypothesis (GRH).
Since our main result holds for
arbitrary $g$, there is hope that it gives a step towards
this conjecture in general.

Our paper consists of two main parts. First we study 
$\ell$-adic associated characters of abelian varieties,
giving an essentially complete classification.
Then, we turn to the
study of mod-$\ell$ characters.
We show that for $\ell$ greater than some constant
depending only on $K$ and $g$, any mod-$\ell$
associated character of a $g$-dimensional
abelian variety defined over $K$ belongs to
a small list of possibilities.
An abelian variety with a mod-$\ell$
associated character does not necessarily have
an $\ell$-adic associated character;
therefore, a priori, there could be many more possibilities
for mod-$\ell$ associated characters than for $\ell$-adic associated characters.
Remarkably, this is not the case:
\emph{Our list of possible mod-$\ell$ characters is
closely related to the list of $\ell$-adic
associated characters that can occur.}

\medskip

Now we will turn to a precise formulation of our results.
The relationship between $\ell$-adic and mod-$\ell$
associated characters discussed above
is particularly sharp in dimension $g=1$ (elliptic curves), 
especially if one assumes GRH. In
this case we have the following theorem.

\begin{ithm}[Theorem~\ref{ellcurve}] \label{introellcurve}
Let $K$ be a number field.
Then, there exists a
finite set $S_K$ of prime numbers depending only on $K$
such that, for a prime $\ell \notin S_K$,
and an elliptic curve $E$ over $K$ for which $E[\ell]\otimes\bar{\ff}_\ell$ 
is reducible with {\weight} $1$ associated character $\psi$,
one of the following holds.
\begin{enumerate}
\item\label{iellcurve1} There exists a CM
elliptic curve $E'$, which is defined over $K$
and whose CM-field is contained in $K$, with
an $\ell$-adic {\weight} $1$ associated character
whose mod-$\ell$ reduction $\psi'$ satisfies:
\begin{equation}\label{iellcurvealign}
\psi^{12} = (\psi')^{12}
\end{equation}
\item
The Generalized Riemann Hypothesis fails for $K[\sqrt{-\ell}]$, and 
\begin{equation} \label{iradical}
\psi^{12} = \cyc_{\ell}^6,
\end{equation}
where $\cyc_\ell$ is the cyclotomic character.
(Moreover, in this case we must have $\ell \equiv 3$ mod $4$ and
the representation $\rho_{E, \ell}$ is already reducible over $\ff_\ell$.)
\end{enumerate}
\end{ithm}

\begin{rem}
The proof of Theorem~\ref{introellcurve} implies that $E'$
depends only on $E$ (and not on $\ell$); moreover,
$\psi' \otimes \psi^{-1}$ is ramified only
at primes of bad additive reduction for $E$.
\end{rem}
The proof of Theorem~\ref{introellcurve} also shows that the set
$S_K$ is effectively computable. In Theorem~\ref{effectivethm}, we
give an explicit bound on the value of $\prod_{\ell \in S_K} \ell$.

In Case~\ref{ellcurve1} of the theorem (which is the only possible
case if we assume GRH),
the twelfth powers of the
other associated characters of $E[\ell]$ and $E'[\ell]$
also coincide,
since the two representations $E[\ell]$ and $E'[\ell]$ have the same
determinants (both equal to $\cyc_\ell$). In particular,
equation \eqref{iellcurvealign}
can be formulated more concisely as
\[\Psi^{12}(E[\ell])=\Psi^{12}(E'[\ell]),\]
where $\Psi^n$ is the Adams operation
on the Grothendieck ring of representations
of $G_K$ over $\ff_\ell$.
Similarly, we can rewrite equation \eqref{iradical} as
$\Psi^{12} (E[\ell]) = \cyc_\ell^6 \oplus \cyc_\ell^6$.

Theorem~\ref{introellcurve} implies the following result.
(This is also a straightforward consequence of the results
of \cite{merel} and \cite{momose}, but does not appear
to be written anywhere in the literature.)
\begin{icor}[Corollary~\ref{isogeny}] \label{cor:introisogeny}
Under GRH,
the degrees of prime degree isogenies of elliptic curves over $K$
are bounded uniformly if and only if $K$ does not contain the Hilbert class field
of an imaginary quadratic field $F$ (i.e.\ if and only if there
are no elliptic curves with CM defined over $K$).
\end{icor}

Theorem \ref{introellcurve} follows from the more general Theorem
\ref{intromainthm}, which gives an analogous
statement for arbitrary abelian varieties. 
Before formulating it, we need to introduce some
technical notions. First, however,
we give a statement in the case
where $K$ has a real embedding, which is considerably
simpler. More generally, it suffices to assume that $K$ does
not contain any \emph{CM-fields}, i.e.\ totally
imaginary quadratic extensions of totally real fields.

\begin{icor}[Corollary~\ref{cor:nocmfield}] \label{cor:intronocmfield}
Let $K$ be a number field that does not contain
any CM-fields (which is in particular true when $K$
has a real embedding), and $g$ and $d$ be positive integers.
There exists a
finite set $S_{K, g}$ of prime numbers depending only on $K$
and $g$, and a constant $0 < c_g < 12^{4g^2}$ depending only on $g$
such that, for a prime $\ell \notin S_{K, g}$,
and a $g$-dimensional abelian variety $A$
with a mod-$\ell$ associated character $\psi$
of {\weight} $d$,
\[\psi^{2w} = \cyc_\ell^{aw},\]
where
$a$ is an integer with $0 \leq a \leq 2d$, and
$w = \frac{\lcm(c_g, N)}{2}$ for some positive $N \leq \binom{2g}{d}$.
\end{icor}

In general, we will try to relate
the associated character $\psi$ to the mod-$\ell$
reduction of an $\ell$-adic associated character
of another abelian variety $B$. This encompases the above
case, because the determinant of the entire
Tate module gives an $\ell$-adic associated
character which is a power of the cyclotomic character,
and there are no $\ell$-adic associated characters
of abelian varieties defined over such fields $K$
besides twists of powers of the cyclotomic character.
In order to do this, we will need to
study the $\ell$-adic associated characters
of abelian varieties first.

Using a theorem of Faltings \cite{faltings}, we will see that any
$\ell$-adic
associated character arises from an abelian variety $B$
which has (generalized) \emph{complex multiplication},
i.e.\ has action by an order in a field $F$, where $F$ is a
CM-field or $F = \qq$.
Note that we do \emph{not} assume that $\deg F = 2 \cdot \dim A$
(as is the case in the classical theory of complex multiplication);
if this is the case, we call it \emph{full} CM.

Suppose that $B$ is an abelian variety over
$K$ with complex multiplication
by a CM field $F$,
i.e.\ with an injection $F\hookrightarrow \mend(B)\otimes\qq$. 
Then
$B[\ell^\infty]\otimes\qq_\ell$ is an $(F\otimes\qq_\ell)$-module, which one
can show is free of dimension $\frac{2g}{[F:\qq]}$. The problem
of finding $\ell$-adic associated characters for the $F$-eigenspaces
reduces to computing
the $(F\otimes\qq_\ell)$-determinant
of this representation --- composing this determinant 
with various embeddings $F\hookrightarrow \bar{\qq}_\ell$
gives these $\ell$-adic associated characters of $B$. 
By the local characterization of such
determinants in Appendix~\ref{detcrys} (written by Brian Conrad),
we see that these characters are determined (up to twists)
by the CM type of $B$, i.e.\ the isomorphism class of the 
$K$-$F$ bimodule
$\Phi = \lie(B)$; they can be described quite
explicitly in terms of ``algebraic'' class 
field theoretic characters of the type studied by Serre in \cite{abelianladic}.

Specifically, given a CM type $(F,\Phi)$ we define a 
field $K_{F, \Phi}$ and a Galois character
$\psi_{F,\Phi} \colon G_{K_{F, \Phi}} \to \bar{\qq}_\ell^\times / \mu_F$
(where $\mu_F$ is the group of roots of unity contained in $F$),
which are uniquely determined by the following properties,
as shown in Theorems~\ref{psifphi} and~\ref{irredthm}.
(Technically, the character $\psi_{F, \Phi}$ depends
on a choice of embedding $\sigma \colon F \hookrightarrow \bar{\qq}_\ell$
as well, but we will usually suppress this and assume we have chosen
an appropriate embedding.)

\begin{enumerate}
\item If an abelian variety $B$ over $K$ has CM of type
$(F, \Phi)$ which is defined over $K$,
then $K_{F, \Phi}$ is contained in the ground field $K$.

\item In the above case, the associated characters
of the $F$-eigenspaces of 
$B[\ell^\infty] \otimes \qq_\ell$ define characters
$\psi_{B, \sigma} \colon G_K \to \bar{\qq}_\ell^\times$
(indexed by embeddings $\sigma \colon F \hookrightarrow \bar{\qq}_\ell$),
and modulo the group $\mu_F$ of roots of
unity in $F$, these characters coincide with the restriction
of $\psi_{F,\Phi}$ to $G_K$, i.e.\ 
\[\psi_{F,\Phi}|_{G_K} \equiv \psi_{B,\sigma} \mod \mu_F.\]

\item \label{shim}
The field $K_{F, \Phi}$ is the \emph{separating field}
(i.e.\ minimal
field of definition of the geometrically irreducible components) of
the Shimura variety for some nonempty
collection of polarized abelian varieties with CM type $(F, \Phi)$.
\end{enumerate}

The Shimura variety in condition~\ref{shim} is
the coarse moduli space for this collection of polarized abelian
varieties. In particular, if the CM type $(F, \Phi)$
corresponds to an abelian variety $B$ with full CM,
then $B$ and its CM are defined over $K$,
since the Shimura variety is zero-dimensional in this case.

Now we can state our result for abelian varieties of arbitrary
dimension $g$.
Roughly speaking, we show that there exists an effectively computable
finite set $S_{K, g}$ such that for
$\ell \notin S_{K, g}$
and $\psi$ a mod-$\ell$ associated character of a $g$-dimensional
abelian variety, $\psi^a$ is equal (mod $\ell$)
to a character $\psi_{F,\Phi}^b$ 
corresponding to a CM type $(F,\Phi)$ with separating field
$K_{F,\Phi}$ contained in $K$, for some exponents $a > 0$ and $b \geq 0$.
The exponents $a$ and $b$, and the dimension
$\dim_K\Phi$ of abelian varieties of CM type $(F,\Phi)$
are both bounded by a constant depending only on $g$ and 
satisfy some additional restrictions.

In equation~\eqref{iellcurvealign},
the exponents $a$ and $b$ satisfy
$a = b = 12$, and also we have $\dim E = \dim E' = 1$.
This makes it tempting to expect that in general $a = b = c_g$, and
the CM abelian varieties representing the CM type
$(F,\Phi)$ have dimension bounded by $g$
(i.e.\ $\dim_K \Phi \leq g$). While we believe such a
result should be true, we do not know how to prove
it in general, even assuming GRH. However, we 
can prove a modified version of this statement
under some additional assumptions; see Corollary~\ref{cor:ssone}.
We also can improve our bound on $\dim_K \Phi$
and show that $\dim_K \Phi \leq g$ for $d = \deg \psi = 1$,
using the following observation.
If $\psi_0$ is an associated
character of $A$, then $\cyc_\ell^d \otimes \psi_0^{-1}$
is also an associated character of $A$, due to the
Galois-invariance of the Weil pairing.
Thus, to describe all of the associated characters
of $A$, it suffices to consider one character
in each pair $\{\psi_0, \cyc_\ell^d \otimes \psi_0^{-1}\}$.
Now we state the main theorem of our paper.

\begin{ithm}[Theorem~\ref{mainthm}]\label{intromainthm}
Let $K$ be a number field, and $g$ and $d$ be positive integers.
Then, there exists a
finite set $S_{K, g}$ of prime numbers depending only on $K$
and $g$, and a constant $0 < c_g < 12^{4g^2}$ depending only on $g$
such that, for a prime $\ell \notin S_{K, g}$,
and a $g$-dimensional abelian variety $A$
with a mod-$\ell$ associated character $\psi_0$
of {\weight} $d$,
we have
\[\psi^{e \cdot w} \equiv \psi_{F, \Phi}^w \pmod \ell,\]
where $\psi$ is either $\psi_0$ or $\cyc_\ell^d \otimes \psi_0^{-1}$
and $w = \frac{\lcm(N, c_g)}{\gcd(e, c_g)}$.
Here, $F$ is either $\qq$ or a CM-field,
and $\Phi \colon F \to \mend(K^m)$
is a primitive balanced representation
such that $K \supset K_{F, \Phi}$.
The quantities $a$, $e$, and $N$ are integers with
$e$ and $N$ positive, which
satisfy $m = \frac{1}{2} \cdot a \cdot e \cdot [F : \qq]$.
Moreover, $0 \leq a \leq d$, and both
$\phi(N)$ and $e \cdot [F : \qq]$ are
at most $\binom{2g}{d}$.
\end{ithm}

We think of $\Phi$ as giving the isomorphism class of the
$K$-$F$ bimodule $\lie(B)$,
for any abelian variety $B$ with CM type $(F, \Phi)$.
The above bounds imply that
$m = \dim B \leq \frac{d}{2} \cdot \binom{2g}{d}$.
In particular, if $d = 1$, then $m = \dim B \leq g = \dim A$.

The proof of Theorem~\ref{intromainthm} implies that the set
$S_{K, g}$ is effectively computable. In Theorem~\ref{effectivethm},
we give an explicit bound on the value of $\prod_{\ell \in S_{K, g}} \ell$.

The relationship between mod-$\ell$ and $\ell$-adic characters becomes
particularly nice when $K$ has semistable reduction at all primes over $\ell$
and the associated character has {\weight}~$1$ (i.e.\ is a one-dimensional
subquotient). In this case, we get a very similar result to the conditional
statement of Theorem \ref{introellcurve}.

\begin{icor}[Corollary~\ref{cor:ssone}]
Let $K$ be a number field, and $g$ and $d$ be positive integers.
Then, there exists a
finite set $S_{K, g}$ of prime numbers depending only on $K$
and $g$, and a constant $0 < c_g < 12^{4g^2}$ depending only on $g$
such that, for a prime $\ell \notin S_{K, g}$,
and a $g$-dimensional abelian variety $A$
with a mod-$\ell$ associated character $\psi$
of {\weight} $1$,
one of the following holds.
\begin{enumerate}
\item The character $\psi^{c_g}$ is trivial or equal to $\cyc_\ell^{c_g}$.
\item There exists an abelian unramified extension $M/K$,
a (full) CM abelian variety $A'$ defined over $M$,
such that $K$ contains the reflex field of the CM field of $A'$
(which in particular implies that $A'$ has CM defined over $M$),
and an $\ell$-adic associated character of \weight~$1$
of $A'$, whose mod-$\ell$ reduction $\psi'$ satisfies
\[(\psi|_{\gal(\bar{K}/M)})^{c_g} = (\psi')^{c_g} \quad \text{and} \quad 
(\dim A') \cdot (\exponent M/K) \leq g.\]
\end{enumerate}
\end{icor}

When $A$ is an abelian surface and $d = 1$, the result
of Theorem~\ref{intromainthm} can be formulated more concisely.

\begin{icor}[Corollary~\ref{absurf}]
Let $K$ be a number field.
Then there exists a
finite set $S_K$ of prime numbers depending only on $K$
such that, for a prime $\ell \notin S_K$,
and an abelian surface $A$
with a mod-$\ell$ associated character $\psi$
of {\weight} $1$,
one of the following holds.
\begin{enumerate}
\item There exists a full CM abelian surface $A'$
over $K$ whose CM is defined over $K$, with an
$\ell$-adic \weight~$1$ associated character
whose mod-$\ell$ reduction $\psi'$ satisfies
\[\psi^{120} = (\psi')^{120}.\]
\item
There exists an abelian unramified extension $L/K$ of exponent
at most $2$, a CM
elliptic curve $E'$ defined over $L$,
such that $K$ contains the CM field, and
an $\ell$-adic \weight~$1$ associated character of $E'$ whose mod-$\ell$
reduction $\psi'$ satisfies
\[\psi|_{\gal(\bar{K}/L)}^{120} = (\psi')^{120}.\]
\item For some $a \in \{0, 60, 120\}$, we have
\[\psi^{120} = \cyc_\ell^a.\]
\end{enumerate}
\end{icor}

There is a simlar formulation for abelian threefolds
(see Corollary~\ref{abthree}),
but not for higher-dimensional abelian varieties;
this corresponds to the fact that for $g \geq 4$, the relevant Shimura
varieties may have nonzero dimension.

If $A = \alb(X)$ is the Albanese variety of a
smooth proper scheme $X$ of finite type over $K$,
then we can apply Theorem~\ref{intromainthm} to $A$.
For $\ell$ sufficiently large,
Theorem~\ref{intromainthm} gives nontrivial
restrictions on the action of $G_K$ on the
\'etale cohomology $H^1(X, \ff_\ell) \simeq H^1(\alb(X), \ff_\ell)$.
We believe it may be possible to study the associated characters of the
higher \'etale cohomology
groups $H^r(X, \ff_\ell)$ using our methods.
Indeed, in several cases --- e.g.\ when $X$ has everywhere
good reduction --- our techniques give an analogous
result for $H^r(X, \ff_\ell)$. However, in the general case
there are some difficulties that occur due to the lack
of a good theory of semistable reduction for arbitrary smooth proper
schemes
(see Remark~\ref{general} for details).

Our proof of Theorem~\ref{intromainthm} is similar in spirit
to the method used by Serre in \cite{serre}
to classify elliptic curves with non-surjective mod-$\ell$
Galois action.
Working with abelian varieties instead
of elliptic curves introduces additional issues.
This is because while there are only finitely many
elliptic curves with a given endomorphism ring larger than $\zz$,
there are families of abelian varieties with extra endomorphisms
parameterized by positive-dimensional Shimura varieties.
This forces the theory of associated characters to become more complicated,
once these Shimura varieties enter into the picture.
Moreover, obtaining uniform bounds
requires a more delicate analysis than Serre's \cite{serre},
in particular because we cannot
assume that $A$ is everywhere semistable by extending
the ground field.

The paper consists of two main essentially independent parts.
In the first part, we explicitly compute the $\ell$-adic associated
characters of abelian varieties in terms of class
field theory. This analysis
is relatively simple, but
requires a number of technical results
including Faltings' Theorem \cite{faltings} and
the theory of Shimura varieties
of PEL type \cite{travshim}.
It also uses the local characterization of determinants
of subquotients of the $p$-adic Tate module,
proved by Brian Conrad in Appendix~\ref{detcrys} using
$p$-adic Hodge theory.
The second part of the paper, about mod-$\ell$
associated characters, is significantly
more involved but uses less machinery.
We use a result of Raynaud to get control 
over the action of the inertia groups at primes dividing
$\ell$, and use the Weil conjectures
(and Grothendieck's generalization in~\cite{monodromy}
for primes of bad reduction)
to control the behavior
of Frobenius elements at primes not dividing $\ell$.
These pieces of information are then combined
to produce a list of explicit expressions for the mod-$\ell$
associated characters in terms
of class field theory. This list turns out to be remarkably
similar to the list of $\ell$-adic characters found in the first part
of the paper (see Theorem~\ref{intromainthm}).

We now describe the structure of the paper.
In Section~\ref{sec:algchars}, we introduce
the language of algebraic characters which
we will use throughout the paper.
 
In Section~\ref{sec:bigcm} we study abelian varieties $B$
with CM defined over $K$ and their associated determinantal
characters; we also compute the separating fields
of some Shimura varieties classifying $(F, \Phi)$ abelian varieties.

Then we turn to studying mod-$\ell$ associated characters.
We begin in Section~\ref{sec:local} by studying the local behavior of
these characters. In
\ref{sec:frobenii} and~\ref{sec:semistable},
we recall some facts about the action of Frobenius elements on the Tate module
and about semistable reduction of abelian varieties respectively.
In \ref{sec:inertia}, we analyze
the restriction of $\psi$ to the inertia subgroups.

In Section~\ref{sec:global}, we study the global
properties of these characters, using the 
results from Section~\ref{sec:local}. In particular, the
analysis in Section~\ref{sec:global} will apply to any
Galois character that satisfies the key results from
Section~\ref{sec:local}; for details see Remark~\ref{general}.
In~\ref{sec:charclass},
we prove several restrictions on the character $\psi$
using its values on Frobenius elements.
Subsection~\ref{sec:varproof} is devoted to the proof
of the Theorem \ref{intromainthm}, our main result.
Finally, in~\ref{subsec:cors},
we derive some corollaries of the main theorem.

In Section~\ref{sec:curves}, we apply the main theorem
to elliptic curves and prove Theorem~\ref{introellcurve}
and Corollary~\ref{isogeny} about isogenies of elliptic curves.

Finally, in Section~\ref{sec:effective}, we make all of our arguments
effective, proving effective versions of the main theorem
both for elliptic curves and abelian
varieties (Theorems~\ref{thmcg} and~\ref{effectivethm}).

\medskip

\emph{Notation Conventions:} Throughout the paper, we normalize the Artin map to carry
uniformizers to \emph{arithmetic} Frobenius elements, i.e.\ the map
$\alpha\mapsto \alpha^q$.
We write
$n_K$, $r_K$, $R_K$, $h_K$, and $\Delta_K$ for the degree,
rank of the unit group, regulator, class number, and discriminant of $K$
respectively.

\subsection*{Acknowledgements}
We would like to thank David Zureick-Brown, Bryden Cais and Ken Ono for giving
us the problem that led to this paper and answering questions.
We are grateful to Brian Conrad for writing
Appendix~\ref{detcrys}.
Thanks also to Brian Conrad, Jordan Ellenberg, Benedict Gross, Mark Kisin, Barry
Mazur, Jean-Pierre Serre and Vadim Vologodsky 
for valuable comments and
discussions. Finally we would like to express our admiration for
Serre's paper
\cite{serre} whose techniques provide the mathematical inspiration and
departure-point of our work.

\section{\label{sec:algchars} Algebraic Characters}

A multiplicative map of number fields of algebraic origin --- for example
the norm map $\nm \colon K^\times\to \qq^\times$ --- can be thought
of as the induced map on $\qq$-points of a map of commutative algebraic
groups over $\qq$.
Such maps --- which we will call \emph{algebraic characters} ---
will appear for us as determinants of representations induced
by CM (in Section \ref{sec:cm}) and as Galois characters (see Definition
\ref{def:ext}).
In this section, we will introduce two main ways of thinking
about algebraic characters --- a concrete description as
products of embeddings (Proposition \ref{torusmaps}) and
an equivalent picture as determinants of representations
(Proposition~\ref{charsfromdets}).

The main technical result of this section, Lemma \ref{galoischars},
will give a way of
defining global \emph{Galois} characters from algebraic characters
between global fields --- an idea originally due to Serre
(in \cite{abelianladic}) but done here from the slightly 
different vantage point of CM fields and groups of Weil numbers.

\subsection{Definitions and first properties of algebraic characters}

We will be interested in algebraic characters between pairs of global
number
fields and pairs of $p$-adic number fields,
but will first give some general definitions over any base field $Q$.
Suppose that $K$ and $L$ are algebraic extensions of a base field $Q$
and $K/Q$ is finite.
Write $T_K$ and $T_L$ for the algebraic tori 
$\res^K_Q \gm{K}$ and $\res^L_Q \gm{L}$, viewed
as algebraic groups over $Q$.
\begin{defi}
  An \emph{algebraic character} 
is a multiplicative map $\theta\colon K^\times\to L^\times$ 
given by the map on $Q$-points of a morphism of
algebraic groups $T_K\to T_L$. 
\end{defi}

\noindent
Now pick an embedding $L\hookrightarrow \bar{Q}$.

\begin{defi} We write $\emb{K}$ for the set of embeddings
$K \hookrightarrow \bar{Q}$.
\end{defi}

\begin{prop}\label{torusmaps}
The (discrete) abelian group of multiplicative maps $\hom(T_K,T_L)$
is the group of invariants $\zz[\emb{K}]^{G_L}$, with
induced map on $Q$-points 
\[\sum_{\sigma\in\emb{K}}a_\sigma\sigma\colon x\mapsto \prod (x^\sigma)^{a_\sigma} \quad \text{for} \quad x\in T_K(Q)=K^\times.\]
\end{prop}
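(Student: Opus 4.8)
The plan is to pass through the standard anti-equivalence between algebraic tori over $Q$ and Galois lattices, first reducing to the case that $L/Q$ is finite (and assuming, as in all applications, that $K/Q$ is separable, which is automatic since the relevant fields have characteristic zero). Observe that $L$ is the union of its finite subextensions $L_0/Q$, and for $L_0 \subseteq L_0'$ the inclusion $L_0^\times \hookrightarrow (L_0')^\times$ comes from a closed immersion of tori $\res^{L_0}_Q \gm{L_0} \hookrightarrow \res^{L_0'}_Q \gm{L_0'}$, which on character lattices is the surjection $\zz[\emb{L_0'}] \twoheadrightarrow \zz[\emb{L_0}]$ induced by restriction of embeddings. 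Hence $T_L$ is an increasing union of tori, and since $T_K$ is quasi-compact any homomorphism $T_K \to T_L$ factors through some $T_{L_0}$; thus $\hom(T_K, T_L) = \varinjlim_{L_0} \hom(T_K, T_{L_0})$. On the lattice side, $\zz[\emb{K}]$ is a finitely generated abelian group on which $G_Q := \gal(\bar Q/Q)$ acts through a finite quotient (the Galois group of the Galois closure of $K/Q$), so $\zz[\emb{K}]^{G_L} = \bigcup_{L_0} \zz[\emb{K}]^{G_{L_0}}$, where $G_{L_0} = \gal(\bar Q/L_0)$. So it suffices to prove the statement for $L/Q$ finite.

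For $L/Q$ finite, I would use that $T \mapsto X^*(T)$ is a contravariant equivalence between $Q$-tori and finitely generated free $\zz$-modules with continuous $G_Q$-action. Since $K \otimes_Q \bar Q \cong \prod_{\sigma \in \emb{K}} \bar Q$, the base change of $\res^K_Q \gm{K}$ to $\bar Q$ is $\prod_{\sigma \in \emb{K}} \gm{\bar Q}$ with $G_Q$ permuting the factors through its action on $\emb{K}$; hence $X^*(T_K) \cong \zz[\emb{K}]$ as $G_Q$-modules, and likewise $X^*(T_L) \cong \zz[\emb{L}]$. Therefore $\hom(T_K, T_L) \cong \hom_{G_Q}(\zz[\emb{L}], \zz[\emb{K}])$. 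Fixing the chosen embedding $L \hookrightarrow \bar Q$ identifies the $G_Q$-set $\emb{L}$ with $G_Q/G_L$, so $\zz[\emb{L}] \cong \zz[G_Q/G_L]$ is the module induced from the trivial $G_L$-module, and the induction--restriction adjunction gives $\hom_{G_Q}(\zz[G_Q/G_L], \zz[\emb{K}]) \cong (\zz[\emb{K}])^{G_L}$. This yields the asserted description of the group.

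Finally, to obtain the formula on $Q$-points I would trace the isomorphism explicitly. The chosen embedding $L \hookrightarrow \bar Q$ singles out a generator of the $G_Q$-module $\zz[\emb{L}]$, whose image under a $G_Q$-map to $\zz[\emb{K}]$ is a $G_L$-invariant element $\sum_{\sigma} a_\sigma \sigma$ and determines the map; dually, this generator is the character $T_L \to \gm{\bar Q}$ which on $Q$-points is the inclusion $L^\times \hookrightarrow \bar Q^\times$, while each $\sigma \in \emb{K}$ is the character $\chi_\sigma \colon T_K \to \gm{\bar Q}$ with $\chi_\sigma(x) = x^\sigma$ for $x \in T_K(Q) = K^\times$. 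Composing, the torus map $\theta$ attached to $\sum_\sigma a_\sigma \sigma$ satisfies $\theta(x) = \prod_\sigma (x^\sigma)^{a_\sigma}$ in $\bar Q^\times$, as claimed. I expect the only real obstacle to be organizational: keeping the anti-equivalence, the colimit over the finite $L_0$, and the (co)induction adjunction straight, and verifying that the $G_L$-invariance of $\sum_\sigma a_\sigma \sigma$ is precisely what forces $\theta$ to factor through $T_L \subseteq T_{\bar Q}$ rather than merely through $\gm{\bar Q}$.
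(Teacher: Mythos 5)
Your argument is correct, and it is essentially the standard proof underlying the reference the paper gives in place of a proof (Serre, \cite{abelianladic}, Chapter~II, \S1.1): identify $X^*(T_K)$ with $\zz[\emb{K}]$ as a Galois module and apply Galois descent / Frobenius reciprocity to compute $\hom(T_K,T_L)=\zz[\emb{K}]^{G_L}$, then trace the identification on $Q$-points. Your preliminary colimit reduction handling infinite $L$ (so that $T_L$ makes sense as an increasing union of tori and any map from the quasi-compact $T_K$ factors through a finite stage) is a reasonable way to make precise a point the paper leaves implicit.
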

\begin{proof}
See \cite{abelianladic}, section~1.1 of chapter~2.
\end{proof}

\begin{cor}
If $L$ contains the Galois closure $\kgal \subset \bar{Q}$ of $K$,
then
\[\hom(T_K,T_L)=\zz[\emb{K}].\]
\end{cor}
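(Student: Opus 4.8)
The plan is to read this off directly from Proposition~\ref{torusmaps}, which identifies $\hom(T_K, T_L)$ with the invariant subgroup $\zz[\emb{K}]^{G_L}$. Thus it suffices to show that the hypothesis $\kgal \subseteq L$ forces $G_L$ to act trivially on the set $\emb{K}$, for then the group of invariants is the whole of $\zz[\emb{K}]$, as desired.

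First I would make the $G_L$-action on $\emb{K}$ explicit: it is post-composition, $\tau \cdot \sigma = \tau \circ \sigma$ for $\tau \in G_L = \gal(\bar{Q}/L)$ and $\sigma \colon K \hookrightarrow \bar{Q}$. (This is the action underlying the Galois-descent statement of Proposition~\ref{torusmaps}; equivalently, it is the natural action of $G_L$ on the set $\emb{K}$ of $Q$-algebra embeddings of $K$ into $\bar{Q}$, coming from functoriality in the target.) Hence $\tau$ fixes a given $\sigma$ precisely when $\tau$ restricts to the identity on the subfield $\sigma(K) \subset \bar{Q}$, and $\tau$ fixes every element of $\emb{K}$ precisely when $\tau$ restricts to the identity on the compositum $\bigl(\prod_{\sigma \in \emb{K}} \sigma(K)\bigr) \subseteq \bar{Q}$.

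The remaining step is the standard field-theoretic fact that this compositum is exactly the Galois closure $\kgal$: it is the smallest Galois extension of $Q$ inside $\bar{Q}$ containing a copy of $K$, since any normal extension of $Q$ in $\bar{Q}$ containing one conjugate $\sigma(K)$ contains them all. Therefore $G_L$ acts trivially on $\emb{K}$ if and only if $G_L$ fixes $\kgal$ pointwise, i.e.\ if and only if $\kgal \subseteq L$, which is our hypothesis; so $\zz[\emb{K}]^{G_L} = \zz[\emb{K}]$, proving the corollary. There is no serious obstacle here; the only points requiring a little care are the identification of the $G_L$-action on $\emb{K}$ as post-composition --- which is to be extracted from the proof of Proposition~\ref{torusmaps} in \cite{abelianladic} --- and the description of $\kgal$ as the compositum of the $\sigma(K)$. (If one prefers, one may simply take the latter as the definition of $\kgal$, in which case the corollary reduces to the tautology that a compositum of subfields of $\bar{Q}$ is fixed pointwise by $G_L$ exactly when it is contained in $L$.)
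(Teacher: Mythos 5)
Your proof is correct and is exactly the argument the paper intends: the corollary is left as an immediate consequence of Proposition~\ref{torusmaps}, since $\kgal \subseteq L$ makes $G_L$ act trivially on $\emb{K}$, so the invariants $\zz[\emb{K}]^{G_L}$ are all of $\zz[\emb{K}]$. Your spelled-out verification (post-composition action, compositum of the $\sigma(K)$ equals $\kgal$) fills in the same routine details the paper omits.
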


\begin{rem}
For the purposes of this paper, one can always think
of algebraic characters as maps $K^\times \to L^\times$
of form given by Proposition \ref{torusmaps}. 
\end{rem}

\begin{defi}
For $S = \sum S(\sigma) \cdot \sigma \in \zz[\emb{K}]$,
we define the algebraic character $\theta^S$ via
\[\theta^S(x) = \prod \sigma(x)^{S(\sigma)}.\]
\end{defi}

\begin{defi}
We say that a character $\theta\colon T_K\to T_L$ is \emph{positive}
if it extends
from $T_K$ to $\res^K_Q \mathbb{A}^1_K$, or equivalently if $\theta=\theta^S$
for $S=\sum S(\sigma)\cdot\sigma$ with all $S(\sigma)$ nonnegative.
Similarly, we say that $\theta$ is \emph{negative} if
$\theta \circ (x \mapsto x^{-1})$ is positive.
\end{defi}

\begin{defi} We define the \emph{degree} of an algebraic
character $\theta^S$ to be $\max_{\sigma \in \emb{K}} |S(\sigma)|$.
\end{defi}

Now we give the dictionary between characters and
representations.
  Suppose $V$ is a $K\otimes_Q L$ bimodule which is finite-dimensional
as an $L$-module. Thinking of $V$ as an $L$-vector space
with an action of the (algebraic) group $K^\times,$ 
we define $\det_LV \colon K^\times\to L^\times$
via the $L$-determinant of the $K$-action, i.e.
\[a \mapsto \det_L\big((x \mapsto a \cdot x) \colon V \to V\big).\]
\begin{prop}\label{charsfromdets}
For any positive algebraic character $\theta\colon K^\times\to L^\times$,
there is
a $K\otimes_QL$-bimodule $V$ which is finite-dimensional
as an $L$-module and such that $\det_LV=\theta.$ Moreover,
the bimodule $V$ is unique up to isomorphism.
\end{prop}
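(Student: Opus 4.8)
The plan is to make everything completely explicit in terms of the combinatorial data $S \in \zz[\emb{K}]^{G_L}$. By Proposition~\ref{torusmaps}, a positive algebraic character $\theta$ is of the form $\theta^S$ with $S = \sum_{\sigma \in \emb{K}} S(\sigma)\cdot\sigma$ where every $S(\sigma) \geq 0$. First I would recall the basic building block: for each embedding $\sigma \colon K \hookrightarrow \bar{Q}$ whose image lies in $L$ we would like a one-dimensional $L$-vector space on which $K$ acts through $\sigma$, but since $\sigma(K)$ need not be contained in $L$ we instead work with the $G_L$-orbits of embeddings. For a $G_L$-orbit $\mathcal{O} \subset \emb{K}$, pick a representative $\sigma \in \mathcal{O}$; the compositum $L \cdot \sigma(K)$ inside $\bar{Q}$ is a finite extension of $L$, and as a $K \otimes_Q L$-bimodule (with $K$ acting via $\sigma$ and $L$ acting by multiplication) it is finite-dimensional over $L$. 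Its $L$-determinant character sends $a \in K^\times$ to $\nm_{L\cdot\sigma(K)/L}(\sigma(a))$, and a short computation with conjugates shows this equals $\prod_{\tau \in \mathcal{O}} \tau(a)$, i.e. it is $\theta^{S_{\mathcal{O}}}$ where $S_{\mathcal{O}} = \sum_{\tau \in \mathcal{O}} \tau$. Since $S$ is $G_L$-invariant, $S = \sum_{\mathcal{O}} m_{\mathcal{O}} \cdot S_{\mathcal{O}}$ for uniquely determined nonnegative integers $m_{\mathcal{O}}$ (the common value of $S(\sigma)$ for $\sigma \in \mathcal{O}$), so the bimodule
\[
V = \bigoplus_{\mathcal{O}} \big(L \cdot \sigma_{\mathcal{O}}(K)\big)^{\oplus m_{\mathcal{O}}}
\]
satisfies $\det_L V = \prod_{\mathcal{O}} \theta^{S_{\mathcal{O}}}{}^{m_{\mathcal{O}}} = \theta^S = \theta$, using that the determinant of a direct sum is the product of the determinants. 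This proves existence.

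For uniqueness, the key point is that a $K \otimes_Q L$-bimodule finite-dimensional over $L$ is the same as a module over the finite-dimensional semisimple $L$-algebra $K \otimes_Q L \cong \prod_{\mathcal{O}} L \cdot \sigma_{\mathcal{O}}(K)$, the product running over $G_L$-orbits of embeddings (here I would invoke that $K/Q$ is separable, which is automatic if $Q$ has characteristic zero as in all our applications, or can be arranged; alternatively one reduces to the étale case since nilpotents in $K \otimes_Q L$ act trivially on the determinant anyway). Hence every such $V$ decomposes uniquely, up to isomorphism, as $\bigoplus_{\mathcal{O}} (L\cdot\sigma_{\mathcal{O}}(K))^{\oplus n_{\mathcal{O}}}$ for some nonnegative integers $n_{\mathcal{O}}$, and $\det_L V = \prod_{\mathcal{O}} \theta^{S_{\mathcal{O}}}{}^{n_{\mathcal{O}}}$. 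Because the characters $\theta^{S_{\mathcal{O}}}$ are $\zz$-linearly independent in $\hom(T_K, T_L) = \zz[\emb{K}]^{G_L}$ — indeed $S = \sum n_{\mathcal{O}} S_{\mathcal{O}}$ recovers the $n_{\mathcal{O}}$ as the coefficients $S(\sigma)$ — the equality $\det_L V = \theta = \theta^S$ forces $n_{\mathcal{O}} = m_{\mathcal{O}}$ for all $\mathcal{O}$, so $V$ is isomorphic to the module constructed above.

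The step I expect to be the main (minor) obstacle is being careful about the semisimple structure of $K \otimes_Q L$ when $L$ is an infinite algebraic extension of $Q$ and about matching the primitive idempotents with $G_L$-orbits of embeddings $K \hookrightarrow \bar{Q}$; this is a standard fact but deserves a clean statement, and it is also where one checks that $L \cdot \sigma(K)$ is really a field and computes its $L$-dimension as $|\mathcal{O}|$. Everything else — the determinant-of-a-direct-sum formula, the identification $\det_L(L\cdot\sigma(K)) = \prod_{\tau \in \mathcal{O}} \tau$, and the linear independence used for uniqueness — is routine once the decomposition of $K \otimes_Q L$ is in hand.
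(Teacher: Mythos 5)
Your proof is correct, but it follows a genuinely different route from the paper's. The paper argues in two steps: it first treats the case $L = \bar{Q}$ via the eigenspace decomposition $K \otimes_Q \bar{Q} \simeq \bigoplus_{\sigma} \bar{Q}[\sigma]$, taking $V = \bigoplus_\sigma a_\sigma \sigma$, and then descends to general $L$ by writing $K = Q[\alpha]$ (primitive element theorem) and invoking the linear-algebra fact that a Galois-invariant conjugacy class of $\gl_n(\bar{Q})$ meets $\gl_n(L)$ in a single $\gl_n(L)$-conjugacy class; uniqueness is absorbed into this conjugacy statement. You instead work directly over $L$, decomposing the \'etale algebra $K \otimes_Q L$ as a product of the composita $L \cdot \sigma_{\mathcal{O}}(K)$ indexed by $G_L$-orbits $\mathcal{O}$ of embeddings, building $V$ explicitly with multiplicities $m_{\mathcal{O}}$ read off from the $G_L$-invariance of $S$, and deducing uniqueness from semisimplicity of $K \otimes_Q L$ together with the injectivity of $S \mapsto \theta^S$ on $\zz[\emb{K}]^{G_L}$. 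What your route buys is a descent-free, fully explicit classification of all such bimodules over $L$ itself (the norm computation $\det_L(L\cdot\sigma(K)) = \theta^{S_{\mathcal{O}}}$ replaces the conjugacy-class rationality lemma); what the paper's route buys is brevity, since the $\bar{Q}$ case is immediate and the descent is delegated to a standard fact. Both arguments use separability of $K/Q$ in the same essential way (primitive element theorem there, the product-of-fields decomposition here), which is consistent with the characteristic-zero settings in which the proposition is applied, and your aside acknowledging this matches the paper's implicit level of generality.
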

\begin{proof}
Suppose at first that $L = \bar{Q}$.
Let the $a_\sigma$ be as in Proposition~\ref{torusmaps}.
Then the representation $V = \bigoplus_{\sigma \in \emb{K}} a_\sigma \sigma$
is the required bimodule. This bimodule is unique since
$K \otimes_Q \bar{Q} \simeq \bigoplus_{\sigma \in \Gamma_K} \bar{Q}[\sigma]$,
where $\bar{Q}[\sigma]$ is a one-dimensional vector space
over $\bar{Q}$ such that the $K$-action is given by $\sigma$
composed with the $\bar{Q}$-action.

For the general case, we invoke the primitive element
theorem to write $K = Q[\alpha]$;
note that our representation is determined by the matrix for $\alpha$
up to conjugacy.
The result thus follows from the following standard
result of linear algebra applied to $N / M = \bar{Q} / L$,
where the Galois-invariance is evident in the 
proof of the case $L = \bar{Q}$:
Given a field extension $N / M$,
the intersection of any Galois-invariant conjugacy class of $\gl_n(N)$ with
$\gl_n(M)$ is a conjugacy class of $\gl_n(M)$.
(This last fact follows from the invariance of the rank
of a matrix under base extension.)
\end{proof}

\begin{rem} When $K$ and $L$ are both
finite-dimensional over $Q$,
the above proposition gives a curious duality between
algebraic characters $K^\times\to L^\times$ and 
$L^\times\to K^\times$, known as the reflex map.
\end{rem}

Now suppose that $\oo \subset Q$ is an integrally closed subring.
Since embeddings of fields preserve
rings of integers, Proposition \ref{torusmaps} implies that
any algebraic character
$\theta\colon K^\times \to L^\times$ sends $\oo_K^\times\subset K^\times$ to 
$\oo_L^\times\subset L^\times$.

\begin{defi}
For an ideal $I\subset \oo_L$ and
algebraic character
$\theta\colon K^\times \to L^\times$ 
we define 
\[\theta\ \text{mod} \ I \colon \oo_K^\times\to (\oo_L/I)^\times\]
to be the map induced by composing with $\oo_L^\times \to (\oo_L/I)^\times$.
We call a character of the above form a \emph{reduction of an algebraic
character}.
\end{defi}

We will be particularly interested in the case where $Q=\qq_p$ and
$I=m_L$ is the maximal ideal of $\oo_L$, giving a map $\theta \
\text{mod} \ m_L \colon \oo_K^\times \to k_L^\times$.

\subsection{\boldmath $\ell$-adic characters induced from global characters}
\label{sec:pinduced}

Suppose $K/\qq$ is a number field and $\ell$ is a prime.
We will take $L=\qqbar$, and fix an embedding
$\iota\colon\qqbar\hookrightarrow \bar{\qq}_\ell$.
\begin{prop}\label{prop:loc}
There is a (unique)
bijective correspondence
\[ (\theta\mapsto\theta_\ell) \colon \hom\left(T_K,T_{\qqbar}\right) \overset{\sim}{\longrightarrow} \hom\left(\prod_{\pp \mid \ell} T_{K_\pp},T_{\bar{\qq}_\ell}\right),\]
with the property that the maps on points $K^\times \to \bar{\qq}^\times$
and $\prod K_\pp^\times\to \bar{\qq}_p^\times$ fit into
the commutative diagram
\[\begin{CD}
K^\times @>\theta>> \qqbar \\
@VVV @VV{\iota}V \\
\prod_{v \mid \ell} K_v @>>{\theta_\ell}> \qqbar_\ell
\end{CD}\]
\end{prop}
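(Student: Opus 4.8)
The plan is to reduce the statement to the combinatorial description of algebraic characters in Proposition~\ref{torusmaps}. First I would observe that, since both $\qqbar$ and $\bar{\qq}_\ell$ are algebraically closed, that proposition (together with its corollary) identifies
\[\hom(T_K, T_{\qqbar}) = \zz[\emb{K}] \qquad\text{and}\qquad \hom\Big(\prod_{\pp \mid \ell} T_{K_\pp}, T_{\bar{\qq}_\ell}\Big) = \prod_{\pp \mid \ell}\hom(T_{K_\pp}, T_{\bar{\qq}_\ell}) = \prod_{\pp \mid \ell} \zz[\emb{K_\pp}],\]
where $\emb{K_\pp}$ is the finite set of embeddings $K_\pp \hookrightarrow \bar{\qq}_\ell$ and the first equality of the second display holds because a homomorphism from a finite product of tori is a tuple of homomorphisms from the factors. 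Thus the asserted correspondence amounts to a canonical isomorphism $\zz[\emb{K}] \cong \zz\big[\coprod_{\pp\mid\ell}\emb{K_\pp}\big]$.

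Second, I would produce this isomorphism from the standard bijection of index sets. Composing $\sigma \in \emb{K}$ with the fixed $\iota$ gives an embedding $\iota\sigma\colon K \hookrightarrow \bar{\qq}_\ell$; since $\iota(\qqbar)$ is the algebraic closure of $\qq$ inside $\bar{\qq}_\ell$, the rule $\sigma \mapsto \iota\sigma$ is a bijection from $\emb{K}$ onto the set of all embeddings $K \hookrightarrow \bar{\qq}_\ell$. Each such embedding $\mu$ refines the $\ell$-adic valuation on $\bar{\qq}_\ell$ to a unique place $\pp \mid \ell$ of $K$, and the completion of $\mu(K)$ inside $\bar{\qq}_\ell$ is a finite extension of $\qq_\ell$, so $\mu$ extends uniquely to a continuous embedding $K_\pp \hookrightarrow \bar{\qq}_\ell$; conversely, restriction to $K$ recovers $\mu$ and $\pp$. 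This gives the bijection $\emb{K} \xrightarrow{\sim} \coprod_{\pp\mid\ell}\emb{K_\pp}$, and I would simply \emph{define} $\theta \mapsto \theta_\ell$ by transporting characters along it.

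Third, I would check the commutative diagram. Writing $\theta = \theta^S$ with $S = \sum_\sigma a_\sigma \sigma$, Proposition~\ref{torusmaps} gives $\theta(x) = \prod_\sigma \sigma(x)^{a_\sigma}$ on $K^\times$, while by construction $\theta_\ell$ sends $(x_\pp)_\pp \in \prod_\pp K_\pp^\times$ to $\prod_\pp \prod_{\tau \in \emb{K_\pp}} \tau(x_\pp)^{a_\tau}$. Feeding in the diagonal image of $x \in K^\times$ and using that $\tau|_K = \iota\sigma$ for corresponding pairs under the bijection above, this product equals $\iota\big(\prod_\sigma \sigma(x)^{a_\sigma}\big) = \iota(\theta(x))$, which is exactly commutativity. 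For uniqueness it suffices to know that distinct embeddings $K \hookrightarrow \bar{\qq}_\ell$ are multiplicatively independent on $K^\times$ — equivalently, that $K^\times$ is Zariski dense in $T_K$ — so that any candidate $\theta_\ell$ is pinned down by its restriction to the diagonal copy of $K^\times$, and the commutativity constraint then forces $\theta_\ell$ to be the character we defined.

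The only step I expect to require genuine care is the second one: checking that $\sigma \mapsto \iota\sigma$ and $\mu \mapsto (\pp,\ \mu\text{ extended})$ are honest bijections rather than mere injections. Surjectivity of the first map uses that $\iota$ identifies $\qqbar$ with the algebraic closure of $\qq$ in $\bar{\qq}_\ell$; surjectivity of the second uses that a subfield of $\bar{\qq}_\ell$ finite over $\qq$ has its $\ell$-adic completion again inside $\bar{\qq}_\ell$, together with the decomposition $K\otimes_\qq\qq_\ell \cong \prod_{\pp\mid\ell}K_\pp$. Everything else is either a direct application of Proposition~\ref{torusmaps} or the elementary density fact used for uniqueness.
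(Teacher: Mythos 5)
Your proposal is correct and follows essentially the same route as the paper: both reduce to the bijection $\emb{K}\simeq\coprod_{\pp\mid\ell}\emb{K_\pp}$ obtained by composing with $\iota$ and extending uniquely to completions, and then extend multiplicatively using the free-abelian-group description of $\hom(T_K,T_{\qqbar})$ from Proposition~\ref{torusmaps}. The only cosmetic difference is that you deduce uniqueness from Zariski density (multiplicative independence of embeddings) where the paper uses $\ell$-adic density of $K^\times$ in $\prod_{\pp\mid\ell}K_\pp^\times$; either works.
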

\begin{defi} \label{def:loc}
We will call this bijective correspondence \emph{localization}
of characters.
\end{defi}
\begin{proof}
  Note that if $\theta_\ell$ exists, it is unique since
$K^\times\subset \prod_{\pp\mid \ell} K_v$ is dense. Now suppose
$\theta=\sigma$ is a single embedding. Then by the decomposition
theorem for ideals under of finite extensions, the composition
$\iota\circ\sigma$ can be uniquely written as 
$K\subset K_\pp\overset{\sigma_\ell}{\longrightarrow}\bar{\qq}_\ell$, where 
$\sigma_\ell \in\emb{K_\pp}$. Having constructed
$\sigma_\ell$ for $\sigma\in\emb{K_\pp}$ which fits into the 
diagram in the proposition, we define
$\theta_\ell$ for arbitrary $\theta$ multiplicatively.
This is an isomorphism $\hom(T_K,T_{\qqbar})
\overset{\sim}{\longrightarrow} \hom\big(\prod_{\pp\mid p} T_{K_\pp},T_{\bar{\qq}_\ell}\big)$
as it induces a bijection
between generators of two free groups.
\end{proof}

\subsection{\label{sec:galchars}
Galois characters induced from algebraic characters}
Localizing an algebraic character $\theta\colon K^\times\to L^\times$ 
of global fields, one can define a number of characters
on the group of id\`eles (for example,
by considering the induced map on
adelic points $T_K(\af) \to T_L(\af)$),
which will in some cases descend to Galois characters. Here
we give such a construction in a rather specific case,
which will be of interest in the rest of the paper.

\begin{defi} \label{def:good}
We say that an algebraic character
$\theta\colon K^\times\to L^\times$ is \emph{balanced} if
the product $\theta \cdot (\theta\circ\sigma)$
is a power of $\nm^K_\qq$ for \emph{any} complex conjugation 
$\sigma\in\gal(L/\qq)$. 
Otherwise, we say $\theta$ is \emph{unbalanced}.
Similarly, we say an $K$-$L$ bimodule $V$ is balanced (resp.\ unbalanced)
if the character $\det_L V$ is balanced (resp.\ unbalanced).
\end{defi}

The following two lemmas, which will play a central role in
the following sections, show that if we hope to extend an algebraic
character $\theta$ (thought of as a Galois character of a product
of local field)
to a \emph{global} Galois group, then (under certain mild
ramification conditions) $\theta$ has to be balanced.

\begin{lm}\label{restons}
The character $\theta^S$ is balanced if and only if $\theta^S(u)$
is a root of unity for any unit $u \in \oo_K^\times$.
\end{lm}
\begin{proof} 
Let us order the $\sigma_i$ so that
$\sigma_1, \sigma_2, \ldots, \sigma_{r_1}$
are real embeddings,
and $\sigma_{r_1 + i}$ is the complex conjugate
of $\sigma_{r_1 + r_2 + i}$.
Consider the multiplicative 
Minkowski embedding, i.e.\ the map
$\mu\colon\oo_K^\times \to  \rr^{r_K + 1}$
given by
\[\mu \colon x \mapsto \big(\log |\sigma_1(x)|, \log |\sigma_2(x)|, 
\ldots, \log |\sigma_{r_1}(x)|, 2\log |\sigma_{r_1 + 1}(x)|, 
\ldots, 2\log |\sigma_{r_1 + r_2}(x)|\big).\]
Dirichlet's unit Theorem states that the kernel of $\mu$ is roots of unity
and
the image of $\mu$ is a lattice in ${\rr^{r_K+1}}_0,$
the subspace of vectors in $\rr^{r_K+1}$ whose sum of coordinates is zero.
Call this lattice $\Lambda=\im(\mu)\subset {\rr^{r_K+1}}_0$. 
Observe that for any $S \in \zz[\Gamma_K]$, the function
\[\left(x \mapsto \log \left|\theta^S(x)\right|\right) = f^S \circ \mu\]
factors as the composition of $\mu$ with a linear function
$f^S \colon \rr^{r_K+1} \to \rr$.
Since $\theta^S(u)$ is a root of unity for any unit $u \in \oo_K^\times$,
it follows that $f^S$ vanishes on units;
moreover, since $f^S$ is linear, $f^S$ vanishes on the hyperplane spanned
by the units. Hence it must be a multiple of the defining equation
of the hyperplane, 
$\sum_{i=1}^{r_1 + r_2} x_i$.

The converse follows from the fact that if $|\theta(u)|=1$ under
any complex embedding, then $\theta(u)$ is a root of unity.
\end{proof}

\begin{lm} \label{ladicgood}
If the localization $\theta^S_\ell$ coincides on an open subgroup 
$U\subset \prod_{v\mid \ell}\oo_{K_v}^\times$ 
with the composition
\[\begin{CD}
\prod_{v \mid \ell} \oo_{K_v}^\times @>>> \ii_K @>>> \gal(K^{\text{ab}} / K) @>\psi>> \bar{\qq}_\ell^\times
\end{CD}\]
for a character $\psi \colon \gal(K^{\text{ab}} / K) \to \bar{\qq}_\ell^\times$
which is ramified at only finitely many primes
and has finite ramification index at primes not lying over $\ell$,
then $\theta^S$ is balanced.
\end{lm}
\begin{proof}
Let $(\oo_K^\times)_\ell$ be the image
of the group of units under the
embedding $\oo_K^\times \hookrightarrow \prod_{v\mid \ell} \oo_{K_v}^\times$,
and let $U_\ell\subset \ii_K$ be the subgroup 
$(\oo_K^\times)_\ell \times \prod_{v\nmid \ell}\oo_{K_v}^\times$.
Then the image of $U_\ell$ in $\ii_K/K^\times$ coincides
with the image of $\prod_{v\nmid\ell} \oo_{K_v}^\times$,
which (by the finite ramification outside of $\ell$ assumption) has 
finite image under $\psi$. On the other hand, $\psi(U_\ell)$
contains
a subgroup of finite index of the image 
$\theta^S(\oo_K^\times)$. Hence, the image of $\oo_K^\times$
under $\theta^S$ is finite, so
$\theta^S$ is balanced by Lemma~\ref{restons}.
\end{proof}

Next we prove a converse --- that if the character \emph{is}
balanced then we can in fact extend it to a global Galois character
with appropriately mild ramification behavior.

\begin{defi}
The \emph{group of Weil elements} $\weil{L} \subset L^\times$ is the group
of elements $w\in L^\times$ satisfying $w\cdot \sigma(w) \in \qq^\times$
for \emph{any} complex conjugation $\sigma\in \gal(\kgal/\qq)$.
\end{defi}

\begin{defi}
  We say that $F$ is a \emph{CM field} if \emph{any}
complex conjugation restricts
to the same nontrivial element of $\gal(F^{\text{gal}}/\qq)$.
Equivalently, $F$ is a CM field if $F$ is a quadratic totally imaginary
extension of a totally real subfield.
\end{defi}

It is easy to see that
a balanced character $K^\times\to L^\times$ factors through an embedding 
$F\subset L$, where $F$ is either a CM field or $F = \qq$.
We will from now on take $F$ to be a CM field (or $\qq$)
and $\theta\colon K\to F$ to be an algebraic character.

\begin{defi} \label{weilclass}
The \emph{Weil class group} $\cla^W(F)=I_F/W_F$ is the
group of fractional ideals modulo the group of Weil elements of $F$.
(Note that it is in general an infinite group.)
\end{defi}

As $\theta$ is a map of algebraic groups, $\theta$
induces a map from the id\`eles of $K$
to the id\`eles of $L$. Since $\theta$ is balanced,
the image (viewed as a map $K^\times \to F^\times$)
of $\theta$ lies in the group of Weil elements of $F$.

\begin{defi} We define $C\theta \colon \cla(K) \to \cla^W(F)$
to be the map induced by $\theta$.
\end{defi}

\begin{defi}\label{def:ktheta} Write $\ii_K^\theta$ for the group of id\`eles of $K$
whose ideal class is in the kernel of $C\theta$.
Write $K_\theta$ for the abelian extension of $K$
corresponding to the subgroup $\ii_K^\theta \subset \ii_K$.
\end{defi}

\begin{defi} We write $N_0$ for the number of roots
of unity in $F$, and $\mu_F = \mu_{N_0}$ for the group
of roots of unity in $F$.
\end{defi}

A Weil element which
is a unit has norm $1$ under any complex embedding, and hence is
a root of unity.
Therefore, $\theta$ induces a map $I\theta$ from $\ii_\theta$ to
$W_F/\mu_F$.
Now, we fix an embedding $F\hookrightarrow \qqbar_\ell$.

\begin{defi} \label{def:ext}
For a map $\theta:K^\times\to F^\times\subset \qqbar^\times$, 
we define the character
$\psi_\theta \colon \ii_K^\theta \to \qqbar_\ell / \mu_F$
via $\psi_\theta := I\theta \cdot \theta_\ell^{-1}$.

Slightly abusing notation, given a $K-F$ bimodule $\Phi$
we define $\psi_{F,\Phi}:=\psi_\theta$ for $\theta=\det_K\Phi:K^\times\to \qqbar^\times$.
\end{defi}

\begin{lm}\label{galoischars}
The character $\psi_\theta$ is trivial on principal id\`eles
and has image contained in $\oo_{\qqbar_\ell}^\times/\mu_F$.
\end{lm}
\begin{proof}
  To show triviality on principal id\`eles, it
suffices to note that given $x\in K^\times,$ we have
an equality of principal ideals $(I\theta(x))=(\theta(x))$,
both of which are generated by a Weil number. 
To show that the image lies in $\oo_{\qqbar_\ell}^\times,$ it suffices
to note that $\psi_\theta(\pi_\pp)$ and $\theta_\ell(\pi_\pp)$ have
the same norm for $\pi_\pp\in K_\pp$ a uniformizer.
\end{proof}
Extending the character to infinite places by the trivial map,
this lets us define a Galois character
(on $\gal(\bar{K} / K_\theta)$)
given any CM field $F$
and balanced character $K^\times \to F^\times$.
\begin{rem}
  The above character is uniquely determined by the property
that it takes any uniformizer $\pi_\pq$ for $\pq\nmid\ell$
and with $[\pq]\in \ker C\theta$
to a Weil number generating the ideal $I\theta(\pq)$
(up to roots of unity).
\end{rem}

\section{\label{sec:bigcm}Abelian Varieties with Complex Multiplication}
\subsection{\label{sec:cm} 
Galois Characters Associated to CM Abelian Varieties}
Let $B$ be an abelian variety defined over a field $K$ of
characteristic zero.
Recall that we say $B$ has endomorphisms by a
number field $E$ if there is an injection
$\iota \colon E \hookrightarrow \mend_K(B) \otimes \qq$.

The $p$-adic Tate module is a finite-dimensional
$(E \otimes \qq_p)$-module, so we can take the determinant over
$(E \otimes \qq_p)$ of the Galois action.
(If we write $E \otimes \qq_p = \prod_{v \mid p} E_v$,
this is just the product of the $E_v$-linear determinants.)
This determinant of the Galois action gives us a map
$\gal(\bar{K} / K) \to (E \otimes \qq_p)^\times$.

Note that the $E$-action gives a decomposition of the Tate
module with extended coefficients, $B[p^\infty]\otimes \bar{\qq}_p$,
into eigenspaces 
(corresponding to embeddings $E\hookrightarrow \bar{\qq}_p$).
The $E\otimes\qq_p$-determinant encodes the 
$\bar{\qq}_p$-determinants (i.e.\ associated characters) of
each of these eigenspaces.
The $\bar{\qq}_p$-determinant
of the $\sigma$-eigenspace of $E$ is given by the character
$\psi_{B, \sigma}$ defined as follows.
\begin{defi}\label{detsigma}
Define the character \[\psi_{B,\sigma}:=(\sigma\otimes\id)\circ
\det_{E \otimes \bar{\qq}_p}B[p^\infty]\otimes\bar{\qq}_p.\]
\end{defi}

We will see
that the study of $\ell$-adic associated characters 
can be reduced to studying the characters $\psi_{B,\sigma}$.
Namely, suppose that $B$ is an abelian variety.
Let $V$ and $W$ be subspaces of $B[\ell^\infty] \otimes \bar{\qq}_\ell$;
write $V^0$ and $W^0$ for $V \cap B[\ell^\infty] \otimes\bar{\zz}_\ell$
and $W \cap B[\ell^\infty] \otimes \bar{\zz}_\ell$ respectively.
We define the distance function
$d(V, W) = \ell^{-n(V, W)}$, where
\[n(V, W) = \max \left\{m : V^0 + \ell^m B[\ell^\infty]\otimes\bar{\zz}_\ell = 
W^0 + \ell^m B[\ell^\infty]\otimes\bar{\zz}_\ell\right\}.\]
Then we have the following lemma.

\begin{lm}\label{splitimpliescm}
Suppose $B$ is an abelian variety over a number field $K$.
Let $V$ be an irreducible
$G_K$-equivariant subspace (or, in this case equivalently,
subquotient) of $B[\ell^\infty]\otimes\qqbar_\ell$.
Then for any $\epsilon > 0$,
there is an $\alpha \in \mend(B)$
such that $d(V, W) \leq \epsilon$
for some eigenspace $W$ of the action of $\alpha$
on $B[\ell^\infty]\otimes\qqbar_\ell$.
\end{lm}
\begin{proof}
Let $V=\sigma_0 V,\sigma_1V,\dots, \sigma_mV$ 
be the conjugates of $V$ under the action
of $\gal(\bar{\qq}_\ell/\qq_\ell)$.
By a theorem of Faltings (Theorem~3 of \cite{faltings}),
the Galois representation
$B[\ell^\infty]\otimes\qq_\ell$ is semisimple. This means it
remains semisimple after extending coefficients to $\qqbar_\ell,$
and so $B[\ell^\infty]\otimes\qqbar_\ell=
\sigma_0V\oplus\dots \oplus \sigma_mV\oplus V'$ for some
subrepresentation $V'$. Now let $H$ be the subgroup
of $G_{\qq_\ell}$ of elements which take $V$
to itself, and let $\kk\subset \qqbar_\ell$ be the
fixed subfield of $H$. The extension $\kk/\qq_\ell$ is finite,
so we can choose $\alpha\in \kk$ a primitive element
over $\qq_\ell$.
Now we define an endomorphism 
\[\alpha_0 = \sigma_0(\alpha)\cdot \id_{\sigma_0 V}\oplus\dots\oplus
\sigma_m(\alpha)\cdot \id_{\sigma_m V}\oplus \, 0_{V'} \colon
B[\ell^\infty]\otimes \qqbar_\ell\to B[\ell^\infty]\otimes\qqbar_\ell,\]
and notice that it is $\gal(\kk / \qq_\ell)$-invariant, and hence
$\alpha_0$
restricts to a
$G_K$-invariant
endomorphism
$\alpha_0 \colon B[\ell^\infty] \otimes \qq_\ell \to B[\ell^\infty] \otimes \qq_\ell$.

Another theorem of Faltings (Theorem~4 of \cite{faltings})
gives that
\[\mend(B)\otimes\qq_\ell\cong 
\mend_{G_K}(B[\ell^\infty]\otimes \qq_\ell).\] 
In particular, there is an element $\alpha\in\mend B\otimes\qq$
whose induced map on the Tate module
approximates $\alpha_0$ to arbitrary $\ell$-adic precision.
Then $V$ is arbitrarily close to a single eigenspace of 
the action of $\alpha$ on $B[\ell^\infty]\otimes \qqbar_\ell$.
\end{proof}

Now suppose $B$ is simple
(we can study the general case by decomposing $B$ up to
isogeny into a product of simple abelian varieties).
We take $E = \qq(\alpha)$, so $\det_W = \psi_{B, \sigma}$.
In the next two sections, we will see that up to multiplication
by roots of unity in $E$, the character $\psi_{B, \sigma}$
is determined by the field $E$ plus some \emph{finite} 
combinatorial data. In particular (up to multiplication
by bounded roots of unity), there are only finitely many 
characters giving the Galois action $\det_W$ in the
above lemma; it follows by taking a sufficiently good
approximation $W$ that one of these finitely many characters
gives the action on $\det_V$.

\subsection{Local Case}
Suppose that $K=\kk$ is a $p$-adic field. By local class field theory,
we have a natural map
$\operatorname{rec} \colon \kk^\times \to \gal(\kk^\text{ab} / \kk)$.

Now, consider the determinant of the Galois action
$\gal(\bar{\kk} / \kk) \to (E \otimes \qq_p)^\times$.
Since $(E \otimes \qq_p)^\times$ is an abelian group, this map
factors through $\gal(\kk^\text{ab} / \kk)$.
Precomposing with the reciprocity homomorphism, we get a map
$\kk^\times \to (E \otimes \qq_p)^\times$.
Write $\lie(B)$ for the tangent space to the identity element
of $B$. Since $\qq_p \subset \kk$ acts on $\lie(B)$, the Lie algebra
$\lie(B)$ naturally has the structure of an $(E \otimes \qq_p)$-$\kk$ bimodule.
We get another map $\kk^\times \to (E \otimes \qq_p)^\times$
induced by taking the determinant over $E\otimes \qq_p$
of the $\kk^\times$-action on the
$(E\otimes\qq_p)$-$\kk$ 
bimodule $\lie(B)$ (c.f.\ Proposition~\ref{charsfromdets}).
These maps do not necessarily coincide, but they almost do
(up to sign);
namely, we have the following theorem, due to Conrad.

\begin{cthm}[Conrad, Appendix~\ref{detcrys}] \label{lmlie}
Let $B$ be an abelian variety defined over a local field $\kk$
of residue characteristic $p$, which admits an injection
$\iota \colon E \hookrightarrow \mend_{\kk}(B) \otimes \qq$.
Then, there is an open subgroup $U \subset \oo_{\kk}^\times$ on which
the following diagram commutes.
\[\begin{CD}
U \subset \kk^\times @>{\operatorname{rec}}>> \gal(\kk^\text{ab} / \kk) \\
@V{u \mapsto (x \mapsto u^{-1} \cdot x)}VV @VV{\det_{E \otimes \qq_p} \circ \rho_{B, p^\infty}}V \\
\aut_{E \otimes \qq_p}(\lie(B)) @>>{\det_{E \otimes \qq_p}}> (E \otimes \qq_p)^\times
\end{CD}\]
\end{cthm}

\begin{rem} When $B$ has semistable reduction,
we can take $U = \oo_{\kk}^\times$ (see Appendix~\ref{detcrys}).
By Lemma
\ref{inertiabound}, this implies that we can 
take $U$ to have index dividing the constant $c_g$
defined in Section~\ref{sec:semistable} below.
\end{rem}

The above lemma implies that, choosing an embedding
$\sigma\colon E\hookrightarrow\bar{\qq}_p$,
the character $\psi_{B,\sigma}$ (Definition \ref{detsigma})
is the same as $(\det_E\lie(B))_p$
(see Propositions~\ref{charsfromdets} and~\ref{prop:loc})
when restricted to an open subgroup.

\subsection{Global Case}
Now suppose $K$ is a global field and
$B$ is a simple abelian variety over $K$.
As explained in Section~\ref{sec:cm},
any $\ell$-adic associated character of
$B$ is given by a $\psi_{B, \sigma}$, corresponding to
an eigenspace for the action of a
number field $E \hookrightarrow \mend(B) \otimes \qq$.
We see from Conrad's Theorem \ref{lmlie} above
that $\psi_{B, \sigma}$ equals $(\det_E \lie(B))_{\ell}$
(in the notation of Definition \ref{def:loc}),
when restricted to an open subgroup of
$\prod_{v\mid \ell}\oo_v^\times\subset \gal(\bar{K}/K)^{ab}$.

Define $F \subset E$
to be the composite of all CM subfields of $E$
(so $F$ is either a CM field or $F = \qq$). Then we have
the following lemma.

\begin{lm}\label{inducedfromphi}
The $E$-representation $\lie(B)$ is
induced from some $F$-representation $\Phi$,
i.e.\ $\lie(B)\simeq \Phi\otimes_F E$.
Moreover, $\det_E \lie(B) = \det_F \Phi$.
\end{lm}
\begin{proof}
By Lemma~\ref{ladicgood}, $\det_E \lie(B)$ is
a balanced character, and hence has image contained in a
CM field (or $\qq$). 
Equivalently by Lemma~\ref{charsfromdets}, the
$E$-module $\lie(B)$ is induced from a module over $F$, i.e.\
$\lie(B)\simeq \Phi\otimes_F E$ 
for some $F$-$K$ bimodule $\Phi$.
It follows that
$\det_E \lie(B) = \det_F \Phi$.
\end{proof}

\begin{defi}\label{def:cmtype}
We say that a polarized abelian variety $B$
is a \emph{$(F,\Phi)$-abelian variety}
if $B$ has endomorphisms by a number field $E \supset F$
such that the action of $E$ makes $\lie(B) \simeq \Phi \otimes_F E$
as an $E$-$K$ bimodule.
We call $(F, \Phi)$ the \emph{CM-type} of $B$.
\end{defi}

Lemma \ref{inducedfromphi} shows that any $\ell$-adic
associated character of a simple abelian variety
is a product of characters of the form $\psi_{B, \sigma}$ for an
$(F,\Phi)$-abelian variety $B$.

\begin{thm}\label{psifphi}
Suppose $B$ is an $(F,\Phi)$ abelian variety and $\theta:K\to \qqbar$ is
the algebraic character $\det_F \Phi$ induced from $\Phi$ under the correspondence 
of Proposition \ref{charsfromdets}. Then, in the notation of Definitions \ref{def:ktheta}
and \ref{def:ext}
we have $K = K_\theta$ and $\psi_{B, \sigma} \equiv \psi_\theta$
mod $\mu_F$.
\end{thm}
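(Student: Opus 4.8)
The plan is to match up $\psi_{B,\sigma}$ with $\psi_\theta = I\theta \cdot \theta_\ell^{-1}$ by testing both characters on a dense set of Frobenius elements, after first verifying the compatibility statement $K = K_\theta$. For the latter, recall that $K_\theta$ is the abelian extension of $K$ cut out by the subgroup $\ii_K^\theta = \{$id\`eles whose ideal class lies in $\ker C\theta\}$; thus $K = K_\theta$ is equivalent to the statement that $C\theta \colon \cla(K) \to \cla^W(F)$ is the zero map. But for any fractional ideal $\aaa$ of $K$, $\theta(\aaa)$ is an ideal of $F$ generated by a Weil element (since $\theta$ is balanced and $\theta$ sends id\`eles to id\`eles, its image on principal id\`eles lands in $W_F$); more relevantly, the main point is that an $(F,\Phi)$-abelian variety over $K$ \emph{exists}, so $\psi_{B,\sigma}$ genuinely defines a Galois character on all of $G_K$ (not just on $G_{K_\theta}$). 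Since the construction of $\psi_\theta$ in Definition~\ref{def:ext} produces a character defined exactly on $\ii_K^\theta$, and the two must agree on an open subgroup of $\prod_{v\mid\ell}\oo_v^\times$ by Conrad's Theorem~\ref{lmlie} together with Lemma~\ref{inducedfromphi} (which identifies $\psi_{B,\sigma}$ locally at $\ell$ with $(\det_F\Phi)_\ell = \theta_\ell$), the existence of a global extension of $\theta_\ell$ forces $C\theta = 0$ by the argument already used in Lemma~\ref{ladicgood} (or one may invoke property~\ref{shim} and the general theory of CM, as sketched in the introduction, to see directly that $K_{F,\Phi} \subseteq K$).

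Granting $K = K_\theta$, I would prove the congruence $\psi_{B,\sigma} \equiv \psi_\theta \pmod{\mu_F}$ by checking it on Frobenius elements $\mathrm{Frob}_\pq$ for primes $\pq \nmid \ell$ of good reduction for $B$, which are dense in $G_K^{\mathrm{ab}}$ by Chebotarev. On the one hand, $\psi_{B,\sigma}(\mathrm{Frob}_\pq)$ is, by the theory of CM abelian varieties (Shimura--Taniyama, or Faltings combined with the Weil conjectures), an algebraic number whose conjugates are the $\sigma$-components of the action of the $\pq$-Weil number $\pi_\pq \in E$ on the Tate module; in particular $\psi_{B,\sigma}(\mathrm{Frob}_\pq)$ generates (together with its Galois conjugates) exactly the ideal $\theta(\pq) = (\det_F\Phi)(\pq) \subset F$, up to roots of unity in $F$. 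On the other hand, by the remark following Lemma~\ref{galoischars}, $\psi_\theta$ is characterized by the property that it sends a uniformizer $\pi_\pq$ (for $\pq\nmid\ell$ with $[\pq]\in\ker C\theta$) to a Weil element generating the ideal $I\theta(\pq)$, up to $\mu_F$. Thus both characters send $\mathrm{Frob}_\pq$ to a generator of the same fractional ideal of $F$, and hence they agree modulo $\mu_F$ on a dense subgroup, giving the claim. One must also check consistency at the primes over $\ell$ and at the archimedean places, but this is exactly the content of Conrad's Theorem~\ref{lmlie} (for $v\mid\ell$) and the triviality convention for infinite places in Definition~\ref{def:ext}.

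The main obstacle is the Frobenius computation: one needs to know precisely that the characteristic polynomial of $\mathrm{Frob}_\pq$ acting on $B[\ell^\infty]\otimes\bar{\qq}_\ell$, on the $\sigma$-eigenspace of $E$, has as its determinant the image under $\sigma$ of the Weil number $\pi_\pq$ associated to $\pq$ by the reduction $\tilde B$ of $B$ at $\pq$ --- and moreover that $\pi_\pq$, as an element of $E$, lies in $F = \det_F\Phi$'s field of values and factors the ideal $\theta(\pq)$ correctly. For abelian varieties of full CM this is classical (Shimura--Taniyama), but here $B$ need not have full CM, so I would instead route through Faltings' semisimplicity (Theorem~3 of \cite{faltings}) and the Weil conjectures to pin down the eigenvalues, then identify the ideal $(\pi_\pq)$ with $\theta(\pq)$ by comparing $\ell$-adic valuations for all $\ell$ via the local formula of Theorem~\ref{lmlie} (which controls the valuation at primes over each $\ell$ in terms of $\Phi = \lie(B)$, exactly as in the classical computation of the type norm). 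Once the ideals match for every prime $\pq$ of good reduction, Lemma~\ref{galoischars} and density finish the proof, the $\mu_F$-ambiguity being unavoidable and harmless.
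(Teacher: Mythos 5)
Your overall strategy (evaluate both characters on Frobenius elements and invoke Chebotarev) is close in spirit to the paper's, but two of your key steps have genuine gaps. First, your argument for $K = K_\theta$: the reasoning of Lemma~\ref{ladicgood} shows only that the existence of a mildly ramified global extension of $\theta_\ell$ forces $\theta$ to be \emph{balanced}, i.e.\ that $\theta(\oo_K^\times)$ consists of roots of unity; it says nothing about the class group, whereas $K = K_\theta$ is precisely the statement that $C\theta\colon \cla(K)\to\cla^W(F)$ is trivial, i.e.\ that $\theta$ of every ideal class is generated by a Weil element. Your fallback, invoking property~\ref{shim} ``as sketched in the introduction,'' is circular here, since that property is stated in the introduction as a consequence of Theorems~\ref{psifphi} and~\ref{irredthm}; an appeal to Theorem~\ref{irredthm} itself would be legitimate (the paper notes this), but then the Shimura-variety computation, not the introduction's summary, is what carries the weight.

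Second, and more seriously, your Chebotarev step rests on the assertion that $\psi_{B,\sigma}(\pi_\pq)$ is a Weil element of $F$ generating the ideal $\theta(\pq)$. For full CM this is the Shimura--Taniyama formula; for partial CM it is essentially the hardest part of the statement, and your proposed derivation --- ``compare $\ell$-adic valuations for all $\ell$ via Theorem~\ref{lmlie}'' --- does not go through as stated. Theorem~\ref{lmlie} controls the restriction of the determinant character to inertia at places over the prime used to form the Tate module, but the fractional ideal of the Weil number $\psi_{B,\sigma}(\pi_\pq)$ is concentrated at the primes of $F$ over the residue characteristic $q$ of $\pq$, where the relevant representation is the $q$-adic one, on which Frobenius at $\pq$ does not act through an unramified quotient; converting that local-at-$q$ inertial information into the value of the $\ell$-adic ($\ell\neq q$) character on Frobenius requires a further argument (independence of $\ell$, crystalline Frobenius, or global algebraicity) that you do not supply. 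The paper avoids needing any such formula: it first proves $\psi_{B,\sigma}\equiv\psi_\theta \bmod \mu_F$ on $\gal(\bar{K}/K_\theta)$, using only that the ratio $\epsilon=\psi_{B,\sigma}/\psi_\theta$ has finite image (agreement on an open subgroup of inertia over $\ell$ by Conrad's theorem, finite ramification elsewhere) together with the much weaker input of Lemma~\ref{cmfrobpoly} --- that $\psi_{B,\sigma}(\pi_\pp)$ merely \emph{lies in} $F^\times$ --- so that $\epsilon$ kills Frobenius elements and Chebotarev applies; only afterwards does it deduce the ideal identity, and hence $K=K_\theta$, by evaluating the congruence at $\pi_\pp^{h_K}$ and using torsion-freeness of the ideal group of $F$. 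If you reorganize your proof in that order, the Shimura--Taniyama-type input you could not justify becomes unnecessary.
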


Before proving the theorem, we give an equivalent statement
which we will use in the following sections.
Let $\theta=\det_F\Phi$ as before, and write $F' \subset K$
for the minimal field such that
$\theta \colon K^\times \to F^\times$ factors as the composition of
$\nm^K_{F'}$ with $\theta_0 \colon F' \to F$.

\begin{defi} We write $K_{F, \Phi} = K_{\theta_0}$ and $\psi_{F, \Phi} = \psi_{\theta_0}$.
\end{defi}

\begin{rem}
We have $K_\theta = K \cdot K_{F, \Phi}$, and
$\psi_\theta = \psi_{\theta_0} \circ \nm^K_{F'}$.
Both the field $K_{F, \Phi}$ and the character
$\psi_{F, \Phi}$ are \emph{geometric}
invariants of $B$; i.e.\ they are unchanged under base extensions.
\end{rem}

Unwinding the definitions, we see that 
the first part of Theorem~\ref{psifphi} is equivalent
to the assertion that $K \supset K_{F, \Phi}$.

\begin{proof}[Proof of Theorem \ref{psifphi}]
First we show that $\psi_{B,\sigma} \equiv \psi_\theta$
mod $\mu_F$ restricted to $\gal(\bar{K} / K_\theta)$,
where both characters are defined.
Consider the quotient 
\[\epsilon:=\psi_{B,\sigma}/\psi_{\theta} \colon \gal(\bar{K} / K_\theta) \to \bar{\qq}_\ell/\mu_F.\]
The two characters $\psi_{B,\sigma}$ and $\psi_{\theta}$
coincide on an open subgroup of any inertia subgroup
$\oo_{K_\pp}^\times\subset \ii_K$
for $\pp\mid \ell$ and have finite
ramification degree outside of $\ell$, so the image
of $\epsilon$ is finite. Now, we use the following lemma.

\begin{lm}\label{cmfrobpoly}
For any prime $\pp$ of good reduction for $B$, there is an embedding
$E\subset\qqbar_\ell$ such that 
$\psi_{B,\sigma}(\pi_\pp)\in E\subset\qqbar_\ell$.
Furthermore, if $F \neq \qq$, then $\psi_{B,\sigma}(\pi_\pp)\in F^\times$.
\end{lm}
\begin{proof}
We have $\psi_{B,\sigma}(\pi_\pp)\in E \subset \qqbar_\ell$
by paragraph~11.10 of~\cite{symplect}.
If $F \neq \qq$, then $F$ contains all totally
real fields as well as all CM fields;
since $\psi_{B,\sigma}$ is a Weil number, it follows
that $\psi_{B, \sigma} \in F$.
\end{proof}
By definition, $\psi_\theta$
takes values in $F^\times/\mu_F$, 
so $\epsilon$ takes Frobenius elements to elements
of $\mu_E/\mu_F=\{1\}$.
Hence, by the Chebotarev density theorem, $\epsilon$ is trivial.

\medskip

Now it remains to prove that $K = K_\theta$. Although
this follows from Lemma \ref{irredthm} of the next section, we 
give an alternative proof here.
For $F = \qq$, this is immediate; thus, we assume $F \neq \qq$. 
Now suppose $\pp$ is 
a prime ideal of $K$ of good reduction for $B$.
Since $\pi_\pp^{h_K} \in \gal(\bar{K} / K_\theta)$, we have
$\psi_{B, \sigma}(\pi_\pp^{h_K}) \equiv \psi_\theta(\pi_\pp^{h_K})$
mod~$\mu_F$.
By Lemma~\ref{cmfrobpoly}, $\psi_{B, \sigma}(\pi_\pp) \in F$,
so 
$(\psi_{B, \sigma}(\pi_\pp))^{h_K} = (\psi_\theta(\pi_\pp))^{h_K}$
as ideals of $F$.
As the group of ideals of $F$ is torsion-free,
$(\psi_{B, \sigma}(\pi_\pp)) = (\psi_\theta(\pi_\pp))$.
Hence, $\theta(\pp)$ is generated by 
$\psi_{B, \sigma}(\pi_\pp)$, which is a Weil element of $F$.
By the Chebotarev density theorem,
we conclude that $C\theta \colon \cla(K) \to \cla^W(F)$
is trivial, i.e.\ $K = K_\theta$.
\end{proof}

\subsection{\label{sec:shimvar}\boldmath Shimura Varieties and the Field $K_{F, \Phi}$}
We have seen in the previous section
that if there is an $(F, \Phi)$-abelian variety $B$ defined
over a number field $K$,
then $K$ contains a certain field $K_{F,\Phi}$ associated to the CM type
$(F,\Phi)$. One can ask whether the converse holds, namely:

\begin{question}\label{cmq}
Does there exist an abelian variety with CM by $(F,\Phi)$ defined
(and with CM defined) over $K_{F,\Phi}$?
\end{question}

To the best of our knowledge, the above question is open,
although we suspect it is false in general.
However we give a statement (Theorem \ref{irredthm} below)
which is the best we can do short of answering Question \ref{cmq}.
This section will be devoted to formulating and proving this
statement, which will boil down to a computation with Shimura
varieties. 
No result from this section will be used in the rest of the paper.

Suppose $X$ is a reduced scheme of finite type over a field
$K\subset \cc$ and $B\to X$ is a (flat) family of
polarized abelian varieties over $X$.

\begin{defi}
The family $B \to X$ is a \emph{strong $(F, \Phi)$-family of abelian varieties}
if there is a map of rings
$\iota\colon \oo_F\hookrightarrow \mend(B/X)$
making every $\cc$-fiber of $B \to X$ an $(F, \Phi)$-abelian variety
with $E = F$ and
$\theta \circ \iota(\bar{\alpha}) = \iota(\alpha)^* \circ \theta$,
where $\theta \colon B \to B^\vee$ is the polarization.
\end{defi}

\noindent
We might hope to weaken Question~\ref{cmq} by replacing
$\spec K_{F, \Phi}$ with a geometrically irreducible
$K$-scheme $X$ of finite type. In other words at least morally,
for any field $K$,
there should be a strong
$(F, \Phi)$-family of abelian varieties $B \to X$
over a geometrically irreducible $K$-scheme of
finite type if and only if $K \supset K_{F, \Phi}$.
The main result of this section will be the following theorem,
which essentially states this is true if we allow
``families whose members are defined up to isomorphism.''
More precisely, we have:

\begin{thm}\label{irredthm}
Let $(F, \Phi)$ be a CM type, with $\Phi$ balanced.
Then there exists a coarse moduli space of strong $(F, \Phi)$-abelian
varieties, which is defined over $K$ provided that $\Phi$ is defined
over $K$.
Moreover, the separating field of this moduli space (i.e.\ the minimal
field over which 
any irreducible component
is geometrically irreducible)
is $K_{F, \Phi}$.
\end{thm}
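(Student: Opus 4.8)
The plan is to build the coarse moduli space as a Shimura variety of PEL type and then identify its separating field by comparing the Galois action on its geometrically irreducible components with the class-field-theoretic description of $K_{F,\Phi}$ obtained in the previous sections. First I would recall the PEL moduli problem associated to the datum $(F,\Phi)$ together with a polarization type and (sufficiently small) level structure: the functor sending a $K$-scheme $S$ to the groupoid of polarized abelian schemes over $S$ with $\oo_F$-action of type $\Phi$ compatible with the polarization via the Rosati involution. By Deligne's descent theory and the representability results for PEL Shimura varieties (see \cite{travshim}), this functor admits a coarse moduli space $\mathrm{Sh}$, which is a quasi-projective $K$-scheme whenever $\Phi$ is defined over $K$; its $\cc$-points are described by the classical Shimura-variety double coset $G(\qq)\backslash (X \times G(\af)/\mathcal{K})$, where $G$ is the relevant unitary/$\mathrm{GL}_1$-type group and $\mathcal{K}$ is the compact open subgroup cut out by the level and polarization data. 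The nonemptiness (needed so that the ``nonempty collection'' in condition~\ref{shim} of the introduction is genuine) follows because over $\cc$ one can always construct a polarized $(F,\Phi)$-abelian variety by the classical CM construction applied to a suitable $\cc$-torus.

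Next I would determine the set $\pi_0$ of geometrically irreducible components. For a Shimura variety of this type, $\pi_0(\mathrm{Sh}_{\bar K})$ is a torsor under the group of connected components of the relevant Shimura variety over $\cc$, which by the theory of connected Shimura varieties is identified with a quotient of $\pi_0(G(\af))$ — concretely, with an idele class group of the reflex field, cut down by the image of $G(\qq)$ and by $\mathcal{K}$. Unwinding this for our $G$ (whose adjoint group is essentially trivial, so the Shimura variety is a finite set of points in the full CM case and more generally a union of components indexed by a narrow ray class group), the Galois action of $G_K$ on $\pi_0(\mathrm{Sh}_{\bar K})$ is given by the reciprocity map of the Shimura variety, which is exactly $C\theta \colon \cla(K) \to \cla^W(F)$ composed with the identification from Definition~\ref{def:ext}. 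The separating field is by definition the fixed field of the kernel of this action, i.e.\ the abelian extension of $K$ corresponding to $\ii_K^\theta$ — which is precisely $K_\theta$, and after the reflex reduction $\theta = \theta_0 \circ \nm^K_{F'}$ it is $K_{F,\Phi} = K_{\theta_0}$. This matches Theorem~\ref{psifphi}: the field $K_{F,\Phi}$ controls both the denominator of $\psi_{F,\Phi}$ and the splitting of the Shimura variety.

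The main obstacle I expect is pinning down the reciprocity law for $\pi_0$ precisely enough to get the field $K_{F,\Phi}$ on the nose (not just up to a finite ambiguity). The subtlety is that the ``right'' group $G$ and level $\mathcal{K}$ depend on the polarization type and the order $\oo_F$ rather than the maximal order, and the canonical model's action on components is given by Deligne's reciprocity morphism $r \colon \mathrm{Res}_{E^*/\qq}\gm \to G^{\mathrm{ab}}$ for the reflex field $E^*$; one must check that composing $r$ with the Artin map of $K$ and projecting to $\pi_0$ reproduces exactly the map $C\theta$ and not some isogenous or index-bounded variant. I would handle this by reducing to the full-CM case first — where $\mathrm{Sh}$ is zero-dimensional and the reciprocity law is Shimura--Taniyama's main theorem of CM, giving the answer by direct computation with CM types — and then deducing the general case by the observation that forgetting part of the polarization/level data gives a finite map of Shimura varieties which is an isomorphism on $\pi_0$ up to a factor that does not change the separating field; alternatively, one invokes that over any field containing $K_{F,\Phi}$ there is, by the full-CM subcase, at least one point, hence at least one geometrically irreducible component, forcing the separating field to be contained in $K_{F,\Phi}$, while Theorem~\ref{psifphi} (applied to $\cc$-fibers of the universal family) gives the reverse inclusion.
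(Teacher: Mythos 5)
Your overall strategy --- realize the coarse moduli space as a PEL Shimura variety and read off the separating field from the Galois/reciprocity action on the set of geometric components --- is the same route the paper takes. But the step you flag as the ``main obstacle'' is in fact the entire mathematical content of the theorem, and both of your proposed workarounds fail. The paper's proof does exactly the computation you defer: it checks that the derived group of $G$ (an $F$-linear symplectic similitude group) is simply connected so that Shimura's criterion applies, computes the abelianization $T = \{(x,y): x\bar{x} = y^b\}$, the map $\nu(g) = (\det g, \text{similitude factor})$, and $\lambda(a) = (\det_F(a\,|\,\Phi), \nm^K_\qq a)$, and then unwinds the criterion for geometric irreducibility of all components into the existence, for each id\`ele class, of $x \in F^\times$ generating the ideal $\theta(a)$ with $x\bar{x} \in \qq^\times$ --- i.e.\ triviality of $C\theta \colon \cla(K) \to \cla^W(F)$, where the appearance of the \emph{Weil} class group is precisely the output of this unwinding and not something you can assert in advance.

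Concretely: (1) your reduction to the full-CM case via ``forgetting part of the polarization/level data gives a finite map of Shimura varieties which is an isomorphism on $\pi_0$ up to a harmless factor'' does not exist --- for $\dim_K\Phi > [F:\qq]/2$ the relevant group is a positive-rank unitary similitude group, not a torus, and the zero-dimensional full-CM Shimura variety is not a quotient or cover of it by any forgetful map. (2) Your alternative argument assumes that ``over any field containing $K_{F,\Phi}$ there is, by the full-CM subcase, at least one point'': this is essentially Question~\ref{cmq}, which the paper states is open and suspects is false. A balanced $\Phi$ decomposes as $\bigoplus_i \Phi_i$ with $\Phi_i$ full (Lemma~\ref{nonempty}), and Corollary~\ref{full} puts each factor over $K_{F,\Phi_i}$, but the determinant characters can cancel in the product, so $K_{F,\Phi_i}$ need not lie in $K_{F,\Phi}$; moreover even a rational point would only control the single component through it, not all components. (3) The reverse inclusion cannot come from Theorem~\ref{psifphi} applied to ``$\cc$-fibers'': the coarse space carries no universal family, the fibers over closed points live over residue fields strictly larger than the base field, and Theorem~\ref{psifphi} concerns abelian varieties over number fields --- the statement that the intersection of all fields of definition of $(F,\Phi)$-abelian varieties equals $K_{F,\Phi}$ is a \emph{corollary} of Theorem~\ref{irredthm} (via Rizov), so arguing this way risks circularity. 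To complete the proof along your lines you must actually carry out the reciprocity computation on $\pi_0$ for the similitude group, which is what the paper does.
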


To prove this theorem, we will use the language of Shimura varieties
classifying polarized abelian varieties with CM. 

First we prove that $\Phi$ must be a balanced character,
which can be reformulated as the following lemma.
\begin{lm}\label{nonempty}
There exists an $(F, \Phi)$-abelian variety
if and only if
$\Phi \oplus \bar{\Phi} \simeq F^b \otimes_{\qq} K$
for some integer $b$ (i.e.\ if and only if $\Phi$ is balanced).
\end{lm}
\begin{proof}
Suppose there was
some $(F, \Phi)$-abelian variety $B$ defined over $\cc$.
Write $B(\cc) = \cc^g / \Lambda$.
Note that $\Lambda \otimes_\zz \qq$ is an
$F$-vector space, hence isomorphic to $F^b$ for some integer $b$.
Therefore, as representations of $F$:
\[\Phi \oplus \bar{\Phi} \simeq \lie(B) \oplus \lie(B^\vee) \simeq \Lambda \otimes_\zz \cc \simeq (\Lambda \otimes_\zz \qq) \otimes_\qq \cc \simeq F^b \otimes_\qq \cc.\]

Now, we show the converse. Since
$\Phi \oplus \bar{\Phi} \simeq F^b \otimes_\qq \cc$
and all irreducible complex representations of $F$ are one-dimensional,
we can write
\[\Phi = \bigoplus_{i = 1}^b \Phi_i \qquad \text{where} \quad \Phi_i \oplus \bar{\Phi_i} \simeq F \otimes_\qq \cc.\]
By the theory of complex multiplication \cite{milne},
there exists a complex abelian variety $B_i$ having CM
by $\oo_F$ with CM type $\Phi_i$. Then
$B = \prod_{i = 1}^b B_i$ is an $(F, \Phi)$-abelian variety,
completing the proof.
\end{proof}

We now define and study the (coarse) moduli space of 
strong $(F, \Phi)$-abelian
varieties. This moduli space
has infinitely many connected components,
which we can remedy by 
keeping track of certain combinatorial data.
The first such piece of data is
the skew form $\langle \cdot, \cdot \rangle$ on the
adelic Tate module $H_1(B, \hat{\zz}) = \prod_\ell B[\ell^\infty]$
induced by the polarization.
We can also keep track of
the action $\alpha \colon \oo_F \to \mend(H_1(B, \hat{\zz}))$,
which must satisfy
\begin{equation} \label{cmform}
\langle \alpha(a) \cdot x, y \rangle = \langle x, \alpha(\bar{a}) \cdot y \rangle \quad \text{for all $a \in \oo_F$.}
\end{equation}
Furthermore, we note that as symplectic $F$-representations,
$H_1(B, \hat{\zz}) \simeq H_1(B, \zz) \otimes \hat{\zz}$ and
$\Phi \oplus \bar{\Phi} \simeq H_1(B, \zz) \otimes \cc$,
where $H_1(B, \zz)$ is the singular homology of $B$
with integral coefficients.

\begin{defi}
A \emph{CM datum} is a quintuple
$(F, \Phi, \langle \cdot, \cdot \rangle, \alpha, \Lambda)$ 
where $(F, \Phi)$ is a CM type with $\Phi$ of dimension $g$ over $\cc$,
$\Lambda$ is a $2g$-dimensional $\hat{\zz}$-lattice,
$\langle \cdot, \cdot \rangle$ is a skew-symmetric integral form 
on $\Lambda$,
and $\alpha \colon \oo_F \to \mend(\Lambda)$ is an action
of $\oo_F$ on $\Lambda$, satisfying formula \eqref{cmform}.
We additionally require that $\Lambda$ is compatible with $\Phi$: i.e., for
some (not necessarily unique) $\zz$-lattice $\Lambda_0$,
we have $\Lambda_0 \otimes \hat{\zz} \simeq \Lambda$
and $\Lambda_0 \otimes \cc \simeq \Phi \oplus \bar{\Phi}$
as symplectic $\oo_F$-modules.
\end{defi}

We say that a polarized
abelian variety $B$ has CM
datum
$(F, \Phi, \langle \cdot, \cdot \rangle, \alpha, \Lambda)$
if there is an isomorphism
$H_1(B,\hat{\zz})\cong \Lambda$ with $\oo_F$-action and polarization form
induced by $\alpha$ and $\langle \cdot, \cdot \rangle$ respectively. 
We will often denote the CM datum by a single letter,
$D=(F, \Phi, \langle \cdot, \cdot \rangle, \alpha, \Lambda)$.

More generally, suppose $X$ is a reduced scheme over $K$ and
that $B \to X$ is a strong $(F, \Phi)$-family 
of abelian varieties.
We say this family has CM datum $(F, \Phi, \langle \cdot, \cdot \rangle, \alpha, \Lambda)$ 
if the $\oo_F$-action induces
CM datum $(F, \Phi, \langle \cdot, \cdot \rangle, \alpha, \Lambda)$
on each $\cc$-fiber.
(A definition over arbitrary base schemes is possible
but more involved; this one suffices for our purposes.)

One can ask whether the functor of families of $(F, \Phi)$-abelian varieties
with CM datum $D = (F, \Phi, \langle \cdot, \cdot \rangle, \alpha, \Lambda)$ is
representable. In general, it is not representable
as a scheme, but is so as a (Deligne-Mumford) stack.
We will consider here for simplicity the \emph{coarse}
moduli space $S_{D}$
of such abelian varieties --- the initial scheme
which admits a map from this moduli stack
(or via the Yoneda embedding, the initial scheme
$S$ such that $\hom(-,S)$ admits a map from
the functor $\text{CM}_{D}$ which assigns to a 
scheme the set of families over it with CM by
${D}$). Points of the
coarse moduli space correspond canonically to $(F, \Phi)$-abelian
varieties with CM datum $D$ up to isomorphism.

Let $(F, \Phi)$ be a CM type defined over a field $K$,
and from now on suppose that the moduli space of strong $(F, \Phi)$-abelian varieties
is nonempty. Equivalently (by Lemma~\ref{nonempty}), suppose
that $\Phi \oplus \bar{\Phi} \simeq F^b \otimes_\qq K$ for some
integer $b$.
Note that the moduli space of strong $(F, \Phi)$-abelian varieties
is a disjoint union over all possible sets of CM data $D$
of the moduli space of strong $(F, \Phi)$-abelian varieties
with CM datum $D$.
In order to prove Theorem~\ref{irredthm}
we will identify this latter moduli space with a Shimura variety
(defined below).

\begin{defi} If $G$ is an algebraic group over $k$,
and $K$ is an extension of $k$, we define
\[G_{/K} = G \times_{\spec k} \spec K.\]
\end{defi}

\begin{defi} We write $\st = \res^\cc_\rr \gm{\cc}$.
\end{defi}

\begin{defi} Let $G$ be an algebraic group over the rationals, 
$h \colon \st \to G_{/\rr}$ be a map of real algebraic groups,
and $G_\oo \subset G(\af^f)$ be a compact subgroup.
Writing $K_\infty$ for the stabilizer of $h$ in $G(\rr)$,
we define the \emph{Shimura variety} to be the double coset space
\[\sh_{G_\oo}(G, h) := K_\infty \times G_\oo \backslash G(\af) / G(\qq).\]
\end{defi}

We will be interested in a particular class of Shimura varieties,
corresponding to particular triples $(G, h, G_\oo)$ which we now describe.
Let $D = (F, \Phi, \langle \cdot, \cdot \rangle, \alpha, \Lambda)$
be a CM datum, and $\Lambda_0$ be a $\zz$-lattice with
$\Lambda_0 \otimes \hat{\zz} \cong \Lambda$
and $\Lambda_0 \otimes \cc \simeq \Phi \oplus \bar{\Phi}$
as symplectic $\oo_F$-modules.
While $\Lambda_0$ is noncanonical, $V = \Lambda_0 \otimes \qq$
(which we think of as a vector space over $F$)
is uniquely determined by $D$ by the Hasse-Minkowski principle for quadratic forms.

\begin{defi} Define $\gsp(V)$ to be the group of $F$-linear maps
$V \to V$ preserving $\langle \cdot, \cdot \rangle$ up to a (rational) scalar.
We think of this as an algebraic group over $\qq$.
\end{defi}

Take $G = \gsp(V)$, and let $G_\oo \simeq G(\oo_{\af^f}) \subset G(\af^f)$
be the stabilizer of $\Lambda \otimes_\zz \oo_{\af^f}$.
Let $h \colon \st \to G_\rr$ be the unique map
of real algebraic groups
inducing multiplication by $z^{-1}$ on $\Phi$ and by $\bar{z}^{-1}$
on $\bar{\Phi}$ for an isomorphism
$V \otimes_\qq \cc \simeq \Phi \oplus \bar{\Phi}$
as $F$-$\cc$ bimodules.  

\begin{cthm}[Deligne, \cite{travshim}, paragraph~4.12]
The coarse moduli space of strong $(F, \Phi)$-abelian 
varieties with given CM datum $D$
is the above
Shimura variety.
\end{cthm}

Now, we recall some theorems of Shimura which
describe when the
irreducible components of $\sh_{G_\oo}(G, h)$
are geometrically irreducible.
To state these theorems, first observe that
$\operatorname{Hom}(\st_{/\cc}, \mathbb{G}_{m, \cc})$
has for a basis the characters $z$ and $\bar{z}$
such that the composition
$\cc^\times \simeq \st(\rr) \hookrightarrow \st(\cc) \to \mathbb{G}_{m, \cc} \simeq \mathbb{C}^\times$
is the identity and complex conjugation respectively.
Write $r \colon \mathbb{G}_{m, \cc} \to \st_{/\cc}$
for the unique map such that $z \circ r$ is the identity
and $\bar{z} \circ r$ is trivial.

\begin{cthm}[Shimura; see Deligne \cite{travshim}, paragraphs 3.6, 3.7, 3.9 and 3.14]
Suppose that the commutator subgroup $G' = [G, G]$ of $G$
is simply connected.
Then $\sh_{G_\oo}(G, h)$ is defined over $K$ if and only
if the conjugacy class 
of the composite map
\[h \circ r \colon \mathbb{G}_{m, \cc} \to \st_\cc \to G_\cc\]
is defined over $K$.
Moreover, writing $T = G^\text{ab} = G / G'$
and $\nu \colon G \to T$ for the projection map,
the irreducible components of $\sh_{G_\oo}(G, h)$
are geometrically irreducible if and only if
\[\lambda((\res^K_\qq \mathbb{G}_{m, K})(\af)) \subset \nu(G_\oo) \cdot T(\qq) \cdot \nu(K_\infty),\]
where the map $\lambda \colon \res^K_\qq \mathbb{G}_{m, K} \to T$ is defined by
\[\lambda(a) = \nm^K_\qq \circ \res^K_\qq (\nu \circ h \circ r)(a^{-1}).\]
\end{cthm}

\noindent
We now use these results to prove Theorem~\ref{irredthm}.

\begin{proof}[Proof of Theorem~\ref{irredthm}]
By the above discussion, it suffices to show
that the Shimura varieties $\operatorname{Sh}_{G_\oo}(G, h)$
are defined over $K$ provided that $\Phi$ is 
defined over $K$, and that in this case, every irreducible component
of $\operatorname{Sh}_{G_\oo}(G, h)$ is geometrically irreducible
if and only if $f = \det_F \Phi$ induces the zero map
$\cla(K) \to \cla^W(F)$.
Note that $G'$ is simply connected, since it is isomorphic
to a special unitary group over $F$. Thus, we can invoke
the theorems of Shimura above.

That $\operatorname{Sh}_{G_\oo}(G, h)$ is defined over $K$
is clear: Because
$\Phi$ is defined over $K$, the conjugacy class of
$r \circ h$ is therefore defined over $K$.

To see that every irreducible component
of $\operatorname{Sh}_{G_\oo}(G, h)$ is geometrically irreducible
if and only if $f \colon \cla(K) \to \cla^W(F)$ is zero,
we first notice that
\[T = \left\{(x, y) \in \left(\res^F_\qq \mathbb{G}_{m, F}\right) \times \mathbb{G}_{m, \qq} : x \cdot \bar{x} = y^b\right\}.\]
Here, the abelianization map
$\nu \colon G \to T$ is given explicitly by $g \mapsto (\det g, a)$,
where $a$ is the unique element of $\qq$ for which $\langle gx, gy \rangle = a \cdot \langle x, y \rangle $.
Under this identification, the map $\lambda$ is
given by
\[\lambda(a) = \left(\det_F\big((x \mapsto a \cdot x) \colon \Phi \to \Phi\big), \nm^K_\qq a \right).\]

Thus, the irreducible components of $\operatorname{Sh}_{G_\oo}(G, h)$ 
are geometrically irreducible if and only if for all $a \in K^\times(\af)$,
we have $\lambda(a) \subset \nu(G_\oo) \cdot T(\qq) \cdot \nu(K_\infty)$.

As $\nu(G_\oo) = T(\oo_{\af^f})$ and $\nu(K^\infty)$ contains
the connected component of $T(\rr)$, the above condition
is equivalent to the assertion that for all $a \in K^\times(\af)$,
we can find
$x \in F^\times$ so that $(f(a)) = (x)$ and
$x \cdot \bar{x} \in (\qq^\times)^b$.
Since $(f(a) \cdot \bar{f(a)}) = (g(a))^b$
and $x \cdot \bar{x} > 0$, this is equivalent
to $x \cdot \bar{x} \in \qq^\times$.
But this is just the assertion that
$f \colon \cla(K) \to \cla^W(F)$ is the zero map.
\end{proof}

This concludes the proof of Theorem~\ref{irredthm}.
As a corollary of Theorem~\ref{irredthm}, we can reprove
a well-known fact about the field of definition of abelian varieties
with full CM (see for example \cite{milne}).
\begin{cor} \label{full}
If $\dim_K \Phi = \frac{[F:\qq]}{2}$, i.e.\ the CM is \emph{full},
then there exists an $(F, \Phi)$-abelian variety
$B$ defined over $K_{F, \Phi}$.
\end{cor}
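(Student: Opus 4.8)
The plan is to realize the coarse moduli space appearing in Theorem~\ref{irredthm} as a zero-dimensional Shimura variety when the CM is full, and then extract a rational point over the separating field $K_{F, \Phi}$. Since $\Phi$ is balanced, the construction in the proof of Lemma~\ref{nonempty} produces an $(F, \Phi)$-abelian variety carrying an $\oo_F$-action, whose adelic homology supplies a CM datum $D = (F, \Phi, \langle \cdot, \cdot \rangle, \alpha, \Lambda)$ with nonempty moduli space $S_D$. For any such $D$, the $\qq$-vector space $V = \Lambda_0 \otimes \qq$ has $\dim_F V = 2 \dim_K \Phi / [F : \qq] = 1$, so $G = \gsp(V)$ is a torus — indeed, a short computation identifies $G(\qq)$ with $\{a \in F^\times : a \bar{a} \in \qq^\times\}$, the commutator subgroup being trivial — and hence $\operatorname{Sh}_{G_\oo}(G, h)$, a double coset space of a torus by a compact open subgroup, is a finite set of points.

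By Theorem~\ref{irredthm}, the separating field of $S_D$ is contained in $K_{F, \Phi}$. Because $S_D$ is zero-dimensional and reduced, this says that after base change to $K_{F, \Phi}$ every irreducible component of $S_D$ becomes geometrically irreducible; but in characteristic zero a geometrically irreducible reduced zero-dimensional $K_{F, \Phi}$-scheme is just $\spec K_{F, \Phi}$, so $S_D$ acquires a $K_{F, \Phi}$-rational point. Such a point corresponds to a $\gal(\bar{K}_{F, \Phi} / K_{F, \Phi})$-stable isomorphism class of $(F, \Phi)$-abelian varieties with CM datum $D$ over $\bar{K}_{F, \Phi}$.

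The remaining step is to descend this isomorphism class to an abelian variety actually defined over $K_{F, \Phi}$ — that is, to upgrade a point of the \emph{coarse} moduli space to an object over its residue field. This is precisely the classical statement that the field of moduli of a full-CM abelian variety, together with its polarization and the level data recorded in $D$, is a field of definition; I would deduce it from \cite{milne}, noting that the automorphism group of the structure is the finite group $\mu_F$ and that in the zero-dimensional situation the datum $D$ already rigidifies the moduli problem. This descent step is the only delicate point: once Theorem~\ref{irredthm} is in hand, identifying the relevant Shimura variety and reading off the rational point are routine, and the content of the corollary is exactly that this classical descent now comes for free from the determination of the separating field.
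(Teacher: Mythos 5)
Your proposal follows essentially the same route as the paper: full CM forces the Shimura variety of Theorem~\ref{irredthm} to be zero-dimensional, so a geometrically irreducible component over the separating field is a single point, giving a strong $(F,\Phi)$-abelian variety with field of moduli $K_{F,\Phi}$, after which the paper simply cites \cite{defmoduli} for the descent from field of moduli to field of definition --- exactly the step you delegate to \cite{milne}. The only quibble is your parenthetical claim that the datum $D$ ``rigidifies'' the moduli problem: automorphisms in $\mu_F$ act trivially on $D$, so $D$ does not kill them, and the descent really does rest on the cited field-of-moduli-equals-field-of-definition result for polarized CM abelian varieties, just as in the paper.
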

\begin{proof}
In this case the Shimura variety $\operatorname{Sh}_{G_\oo}(G, h)$
is zero-dimensional, hence any geometrically 
irreducible component over $K_{F, \Phi}$ corresponds
to a single (strong)
$(F, \Phi)$-abelian variety whose field of moduli is $K_{F, \Phi}$.
Moreover, by \cite{defmoduli}, this abelian variety can be defined over
its field of moduli.
\end{proof}

\noindent
Finally, using a theorem of Rizov, we have:

\begin{cor}
The intersection of all fields $K$ over which one can define
an $(F, \Phi)$-abelian variety is $K_{F,\Phi}$. 
\end{cor}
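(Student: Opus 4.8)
The plan is to deduce this from Theorem~\ref{irredthm} together with a theorem of Rizov on fields of definition of Shimura varieties (and their geometrically irreducible components). First I would observe that by Theorem~\ref{psifphi} (equivalently, by the discussion following it), any field $K$ over which an $(F,\Phi)$-abelian variety exists must contain $K_{F,\Phi}$; so the intersection certainly contains $K_{F,\Phi}$. The content is therefore the reverse inclusion: I must exhibit, over $K_{F,\Phi}$ itself (or over every field containing it), an $(F,\Phi)$-abelian variety.

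Next I would invoke Theorem~\ref{irredthm}: over $K = K_{F,\Phi}$, the coarse moduli space of strong $(F,\Phi)$-abelian varieties is defined over $K_{F,\Phi}$ and, by the computation in the proof of Theorem~\ref{irredthm}, its irreducible components are geometrically irreducible precisely because $f = \det_F \Phi$ induces the zero map $\cla(K_{F,\Phi}) \to \cla^W(F)$. Pick a nonempty such component $S$; it is a geometrically irreducible variety over $K_{F,\Phi}$ carrying (coarsely) a family of $(F,\Phi)$-abelian varieties. The point is now that a geometrically irreducible variety over a number field has closed points with residue field a finite \emph{extension} of $K_{F,\Phi}$ of controlled behavior, but to get a point defined over $K_{F,\Phi}$ itself one needs more: this is exactly where Rizov's theorem enters. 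Rizov's result (on Shimura varieties of PEL type, in the CM setting) guarantees that the reflex-field-valued points — i.e.\ the CM points on these Shimura varieties — are actually defined over the field $K_{F,\Phi}$, and moreover that the corresponding abelian variety descends to that field. Applying this to a CM point of $S$ produces an $(F,\Phi)$-abelian variety $B$ over $K_{F,\Phi}$, and hence over every field containing $K_{F,\Phi}$, completing the reverse inclusion.

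I would then assemble: the intersection of all such fields $K$ contains $K_{F,\Phi}$ (every one of them does, by the forward direction of Theorem~\ref{psifphi}), and is contained in $K_{F,\Phi}$ (since $K_{F,\Phi}$ is itself one of them, by the Rizov argument). Hence the intersection equals $K_{F,\Phi}$.

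The main obstacle I anticipate is the descent step: having a geometrically irreducible component of the coarse moduli space over $K_{F,\Phi}$ does not formally give a $K_{F,\Phi}$-rational point, and one must identify the CM points with Shimura-variety special points whose field of definition is the reflex field $K_{F,\Phi}$ and then invoke the (nontrivial) fact — Rizov's theorem, analogous to Corollary~\ref{full} but without the zero-dimensionality that makes things easy there — that the associated abelian variety can itself be realized over its field of moduli. Once that input is granted, the rest is bookkeeping with the two inclusions already in hand.
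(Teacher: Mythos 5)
There is a genuine gap in your second inclusion, and it is exactly the point the paper is careful to avoid. Your plan produces an $(F,\Phi)$-abelian variety defined over $K_{F,\Phi}$ itself (via "CM points are defined over the reflex field") and then concludes that $K_{F,\Phi}$ is one of the fields in the family, so the intersection is $K_{F,\Phi}$. But whether an $(F,\Phi)$-abelian variety exists over $K_{F,\Phi}$ is precisely Question~\ref{cmq}, which the paper states is open and suspected to be \emph{false} in general; it is only settled affirmatively in the full CM case (Corollary~\ref{full}), where the Shimura variety is zero-dimensional and one can invoke the field-of-moduli result \cite{defmoduli}. In the positive-dimensional case there is no reason a geometrically irreducible component over $K_{F,\Phi}$ should have a $K_{F,\Phi}$-point, and Rizov's theorem does not assert that it does. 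Note also that the corollary does not require such a point: the intersection of a family of fields can equal $K_{F,\Phi}$ without $K_{F,\Phi}$ belonging to the family (just as the intersection of all quadratic fields is $\qq$).

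The intended argument, and the actual content of the Rizov input, is different: for a nonempty scheme of finite type over a number field $E$ whose irreducible components are geometrically irreducible, the intersection of all finite extensions $K/E$ over which the scheme acquires a point is $E$ itself --- no single field realizing the minimum is needed. Combining this with Theorem~\ref{irredthm}, which says the separating field of the coarse moduli space of strong $(F,\Phi)$-abelian varieties is $K_{F,\Phi}$, gives that the intersection of the fields of definition is contained in $K_{F,\Phi}$; the reverse containment is the part of your write-up that is fine (any field carrying an $(F,\Phi)$-abelian variety contains $K_{F,\Phi}$, by Theorem~\ref{psifphi}). If you want to be fully careful you should also address the coarse-moduli subtlety that a $K$-point only records a field of moduli, but in any case the step you must remove is the claim that Rizov's theorem hands you an $(F,\Phi)$-abelian variety over $K_{F,\Phi}$ itself.
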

\begin{proof}
This follows from Theorem~\ref{irredthm} by a theorem of Rizov \cite{rizov}.
\end{proof}

\section{\label{sec:local} \boldmath Associated Characters of $A[\ell]$: Local Properties}

In this section, we give some local properties of the
associated characters of $A[\ell]$, most importantly Lemmas~\ref{obvs}
and~\ref{localnorm}, and Corollary~\ref{psiunr}.

\subsection{\label{sec:frobenii} Action of Frobenius Elements}
Fix a prime $v\nmid \ell$.
If $A$ has good reduction at $v$,
then $\rho_{A, \ell^\infty}$
is unramified at $v$ and the Frobenius element at $v$ acts by a matrix whose characteristic
polynomial $P_v$ is defined over $\zz$, and which has the additional property that all roots
of $P_v$ have (complex) norm $\sqrt{|\nm(v)|}$. (Note that this implies
a bound on the coefficients of $P_v$ given $v$.) When instead of $\rho$ we 
consider an associated character $\psi$, this implies that
$\psi$ at a prime of good reduction is unramified, and the image of a 
Frobenius element satisfies a polynomial all of whose roots
are a product of $d$ roots of $P_v$.

When we drop the assumption that $v$ has good reduction, 
the character $\psi$ can become ramified at $v$, but
a very similar result still holds --- in particular,
the image under $\psi$ of any Frobenius element $\pi$ of $v$ satisfies
one of a finite set (depending on $v$) of polynomials with integral coefficients.
Namely, we have the following result of Grothendieck.

\begin{lm}[Grothendieck]\label{frobpoly}
Let $v\in\Sigma_K\setminus \Sigma_\ell$ be any prime not dividing $\ell$.
Then for any choice of Frobenius
element $\pi = \pi_v \in G_K$ extending $\text{frob}_v$,
the characteristic polynomial $P_\pi$
of $\pi$ acting on the $\ell$-adic Tate module $A[\ell^\infty]$ has
the following properties.
\begin{enumerate}
\item \label{prop:int}
The coefficients of $P_\pi$ are integers, and are independent of $\ell$.
\item \label{prop:mag}
Every root of $P_\pi$ has magnitude
that does not depend on the choice of complex embedding, and is
equal to $1$, the norm of $v$, or
the square root of the norm of $v$.
\item \label{prop:pair}
The roots of $P_\pi$ come in pairs which multiply to the norm of $v$.
\end{enumerate}
\end{lm}

\begin{proof}
Properties \ref{prop:int}
and \ref{prop:mag} are Theorem~4.3(b) and Corollary~4.4
in \cite{monodromy} respectively;
property~\ref{prop:pair} follows from the
Galois-invariance of the Weil pairing.
\end{proof}

Note that $\psi(\pi)$ is the product of some subset of eigenvalues of 
$\rho_{A,\ell}(\pi)$. 
In light of the above lemma, this means that $\psi(\pi)$ is 
the reduction modulo some prime ideal $\pell\subset\oo_{\qqbar}$ 
lying over $\ell$ of
a product of $d$ roots of $P_v$ in $\qqbar$.
\begin{defi}
For any prime $v$ of $K$, choose a Frobenius element $\pi$ over $v$
and define 
\[\psi_\cc(v)\in\qqbar\]
to be a product of $d$ distinct roots of $P_\pi$ such that
$\psi(\pi)\equiv\psi_\cc(v)$ mod $\pell$
(There may be several choices for $\psi_\cc(v)$, but at least
one exists by the above lemma. We make such a choice for
each $v$.)
\end{defi}
\begin{lm}\label{obvs}
For any prime ideal $v \nmid \ell$, there are only finitely many
possible values of $\psi_\cc(v)$, all of which are algebraic
of degree at most $\binom{2g}{d}$. Moreover, there is
some integer $a$ with $0 \leq a \leq 2d$
such that under any complex embedding,
\[|\psi_\cc(v)| = \sqrt{|\nm^K_\qq(v)|}^{a}.\]
\end{lm}
\begin{proof}
This is a direct consequence of Lemma~\ref{frobpoly}.
(To see that $\psi_\cc(v)$ is algebraic
of degree at most $\binom{2g}{d}$, note that
any conjugate of $\psi_\cc(v)$ in $\qqbar$ is a product of $d$ roots
of the Frobenius polynomial $P_\pi\in \zz[x]$,
and in particular there are at most
$\binom{2g}{d}$ of them.)
\end{proof}

\subsection{\label{sec:semistable} Semistable Reduction}

It is well known that any abelian variety $A$ becomes
semistable after a finite extension of the ground field. In order to analyze
the local action of $\rho_{A,\ell}$ in Section \ref{sec:inertia},
we will need to make some more precise statements, which let us
control the ramification in a careful way.

\begin{defi}\label{defcg}
We define the constant $c_g$ to be the least
common multiple of all integers $n$ such that for all
but (at most) one odd prime $p$,
the symplectic group
$\spg_{2g}(\ff_p)$ has an element of order $n$.
\end{defi}

\begin{rem} For an alternative definition of $c_g$,
see Theorem~\ref{thmcg}.
\end{rem}

\begin{lm} \label{inertiabound}
Let $v$ be a valuation on $K$.
Then there exists a (finite and Galois) extension $L$ of $K$
such that $A$ acquires semistable reduction at $v$ over $L$
and the exponent of the inertia subgroup at $v$ of $\gal(L / K)$ divides $c_g$.
\end{lm}

\begin{proof}
For any odd prime $p$ distinct from the residue characteristic of $v$,
write $L^{(p)}$ for the field obtained
by adjoining the $p$-torsion points of $A$ to $K$.
By Proposition~4.7 of \cite{monodromy},
$A$ acquires semistable reduction over each $L^{(p)}$.

For any set $S$ of prime numbers, define $c_S(g)$
to be the least common multiple of all integers
$n$ such that for all odd primes $p \in S$, the symplectic group
$\spg_{2g}(\ff_p)$ has an element of order $n$.
Clearly, we can select some finite set
$S = \{p_1, p_2, \ldots, p_k\}$ of primes, all distinct
from the residue characteristic of $v$, so that $c_S(g) = c_g$.
Moreover, we can select this set of primes such
that none of them divide the degree of some fixed polarization of $A$.

Write $F = L^{(p_1)} \cdot L^{(p_2)} \cdots L^{(p_k)}$
for the composite field, and $I \subset \gal(F / K)$
for the inertia subgroup at $v$. Define $L \subset F$ to be
the fixed field of the subgroup generated by the kernels of the restriction
maps on $I$, i.e.\
\[L = \text{fixed field of} \ \left(\left\langle \ker\left(I \to \gal\big(L^{(p_i)} / K\big)\right) \colon i = 1, 2, \ldots k\right\rangle \subset \gal(F/K)\right).\]

Then an element of $\gal(L/K)$ is the image of an element in $\gal(L^{p_i}/K)$ for any $i$, so
its order divides $c_g$.
To see that $A$ acquires good reduction over $L$,
fix some prime $q$ distinct from the residue
characteristic of $v$ and all of the $p_i$.

As proved in subsection~4.1 of \cite{monodromy},
the subset $I' \subset I$ of elements
which act unipotently on the Tate module $T_q A$ forms a normal subgroup.
Proposition~3.5 of \cite{monodromy} implies that
for all $p_i$, the kernel
$\ker\big(I \to \gal(L^{(p_i)} / K)\big)$ acts unipotently
on $T_q A$; hence the kernel belongs to $I'$.
Thus, the subgroup spanned by the kernels belongs to $I'$,
and therefore acts unipotently
on $T_q A$. Applying Proposition~3.5 of \cite{monodromy}
again, we see that $A$ acquires semistable reduction at $v$ over $L$.
\end{proof}

\begin{cor} \label{psiunr}
The character $\psi^{c_g}$ is unramified at all places $v$ not lying over
$\ell$.
\end{cor}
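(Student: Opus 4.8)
The plan is to combine Lemma~\ref{inertiabound} with the fact that the associated character $\psi$ is a subquotient of the mod-$\ell$ representation $\rho_{A,\ell}$, so it suffices to control the ramification of $\rho_{A,\ell}$ itself at primes $v \nmid \ell$. First I would fix a place $v$ of $K$ not lying over $\ell$ and let $I_v \subset G_K$ be an inertia subgroup at $v$. We must show that $\psi^{c_g}(I_v) = \{1\}$. Since $\psi$ is the determinant of a $d$-dimensional subquotient $W$ of $A[\ell] \otimes \bar\ff_\ell$, the image $\psi(I_v)$ lies in the group generated by the eigenvalues of the $I_v$-action on $A[\ell]\otimes\bar\ff_\ell$; in particular, if the $I_v$-action on $A[\ell]$ factors through a finite quotient of exponent $m$, then $\psi(\sigma)^m = 1$ for all $\sigma \in I_v$ (a determinant of a matrix of order dividing $m$ over an algebraically closed field is a root of unity of order dividing $m$). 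So it is enough to bound the exponent of the image of $I_v$ in $\gl_{2g}(\bar\ff_\ell)$.

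The key step is then as follows. By Lemma~\ref{inertiabound}, there is a finite Galois extension $L/K$ such that $A$ acquires semistable reduction at $v$ over $L$, and the exponent of the inertia subgroup $I_v(L/K) \subset \gal(L/K)$ at $v$ divides $c_g$. Let $I_v' \subset I_v$ be the inertia subgroup of $G_L$ at a place of $L$ above $v$. Since $A/L$ has semistable reduction at this place and $v \nmid \ell$, the action of $I_v'$ on the $\ell$-adic Tate module $A[\ell^\infty]$ is unipotent (this is the classical criterion of Néron--Ogg--Shafarevich together with Grothendieck's semistable reduction theorem, cf.\ \cite{monodromy}); reducing mod $\ell$, the action of $I_v'$ on $A[\ell]$ is unipotent, hence $\rho_{A,\ell}(I_v')$ consists of unipotent matrices, so its determinant is trivial. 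Therefore $\psi(I_v') = \{1\}$, i.e.\ $\psi$ factors through $I_v / I_v'$, which is a quotient of $I_v(L/K)$ and so has exponent dividing $c_g$. Consequently $\psi^{c_g}$ is trivial on $I_v$, for every $v \nmid \ell$, which is precisely the assertion that $\psi^{c_g}$ is unramified away from $\ell$.

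The only mild subtlety — and the main point to be careful about — is the passage from the $\ell$-adic statement (unipotence of $I_v'$ on $A[\ell^\infty]$ under semistable reduction, with $v\nmid\ell$) to the mod-$\ell$ statement, and the observation that unipotence of the $I_v'$-action on $A[\ell]$ forces $\psi|_{I_v'}$ to be trivial rather than merely of $\ell$-power order; this is immediate since a unipotent matrix over a field has determinant $1$. One should also note that although $\psi$ a priori takes values in $\bar\ff_\ell^\times$ (being an associated character of a subquotient of $A[\ell]\otimes\bar\ff_\ell$ rather than of $A[\ell]$ itself), the argument is unaffected: conjugating the $I_v'$-action to upper-triangular form over $\bar\ff_\ell$, the diagonal entries on any $I_v'$-stable subquotient are all $1$, so $\psi|_{I_v'}$ is still trivial. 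With these remarks the corollary follows.

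\begin{proof}[Proof of Corollary~\ref{psiunr}]
Fix a place $v$ of $K$ with $v \nmid \ell$, and let $I_v \subset G_K$ be an inertia subgroup at $v$. It suffices to show that $\psi^{c_g}$ is trivial on $I_v$. By Lemma~\ref{inertiabound}, there is a finite Galois extension $L/K$ over which $A$ has semistable reduction at (the places above) $v$, and such that the inertia subgroup $I_v(L/K) \subset \gal(L/K)$ at $v$ has exponent dividing $c_g$. Let $I_v' = I_v \cap G_L$ be the corresponding inertia subgroup of $G_L$; then $I_v / I_v'$ is a subgroup of $I_v(L/K)$, so it has exponent dividing $c_g$.

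Since $v \nmid \ell$ and $A/L$ has semistable reduction at the place below our chosen place of $\bar K$, the theory of semistable reduction (Grothendieck, \cite{monodromy}) shows that $I_v'$ acts unipotently on the $\ell$-adic Tate module $A[\ell^\infty]$. Reducing modulo $\ell$, the action of $I_v'$ on $A[\ell]$, and hence on $A[\ell] \otimes \bar\ff_\ell$, is unipotent. Therefore, in a basis in which $\rho_{A,\ell}(I_v')$ is upper triangular with $1$'s on the diagonal, every $G_K$-stable subquotient of $A[\ell]\otimes\bar\ff_\ell$ inherits a $I_v'$-action which is again upper triangular unipotent; in particular the $d$-dimensional subquotient $W$ defining $\psi$ has $\det_{\bar\ff_\ell} W|_{I_v'} = 1$, i.e.\ $\psi|_{I_v'}$ is trivial.

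Thus $\psi$ factors through $I_v / I_v'$, a group of exponent dividing $c_g$. Hence $\psi(\sigma)^{c_g} = 1$ for all $\sigma \in I_v$, i.e.\ $\psi^{c_g}$ is trivial on $I_v$. As $v \nmid \ell$ was arbitrary, $\psi^{c_g}$ is unramified at all places not lying over $\ell$.
\end{proof}
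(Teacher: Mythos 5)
Your proof is correct and follows essentially the same route as the paper: pass to the extension $L$ of Lemma~\ref{inertiabound}, use Grothendieck's semistable reduction criterion from \cite{monodromy} to kill $\psi$ on the inertia of $G_L$ at $v \nmid \ell$, and then use that $c_g$-th powers of inertia elements land in $G_L$. If anything, your argument is slightly more careful than the paper's one-line proof, since you correctly work with unipotence of the inertia action (and hence triviality of the determinant on subquotients) rather than literal unramifiedness of $\rho_{A,\ell}$ over $L$.
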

\begin{proof}
By Proposition 3.5 of \cite{monodromy},
$\rho_{A,\ell,L}$ is unramified at $v$ if $v$ is
semistable for $A$ over $L$.
Now $g^{c_g}\in G_L\subset G_K$ for any $g\in G_K$, hence
the corollary.
\end{proof}

\begin{lm} \label{totram}
Let $v \mid \ell$, and suppose $\ell \nmid c_g$.
Then, for any abelian variety $A$,
there exists an extension $L_w / K_v$
over which $A$ acquires semistable reduction
and such that $[L_w : K_v] \mid c_g$.
\end{lm}
\begin{proof}
Write $L^{0}$ for the field $L$
given by Lemma~\ref{inertiabound},
and let $w_0$ be an extension of $v$ to $L^{0}$.
Since $\ell \nmid c_g$,
it follows that $L^0_{w_0} / K_v$ is tamely ramified.
In particular, its inertia subgroup is cyclic.
Thus, the order $e$ of its inertia subgroup divides $c_g$.

Write $I_v \subset G_v = \gal(\bar{K_v}/K_v)$
for the inertia subgroup and
$G_\text{res} = \gal(\bar{k_v} / k_v)$
for the Galois group of the residue field.
Note that there are noncanonical splittings
$G_\text{res}\to G_v$ making $G_v$ into a semidirect product of $I_v$ and $G_\text{res}$. Fix
such a splitting. Now if $H_0$ is the normal subgroup corresponding to $L^{0}_{w_0}$ then $H_0\cap I_v$ is normal
in $G_v$ and we can take for $L_w$ the field corresponding to the (not necessarily normal)
subgroup $H_0\cap I_v\times G_\text{res}$.
\end{proof}

\begin{rem}
Since in what follows we will only be interested in the action of $I_v$,
one could also work with the extension 
$K_v^{nr}\cdot L^{0}_w/K_v^{nr}$ 
instead of having to choose a splitting and working with $L_w/K_v$. 
We will use the essentially equivalent pair $L_w/K_v$ to avoid dealing
with infinite $p$-adic extensions.
\end{rem}

\subsection{\label{sec:inertia} Action of Inertia Groups}

Suppose at first that $v \mid \ell$ is semistable for $A$. Then a remarkable
result of Raynaud \cite{raynaud}
shows that the associated characters of $\rho_\ell$
restricted to $I_v$ are
algebraic over $\ell$ in the sense of
section~\ref{sec:algchars}.

\begin{lm}\label{localnormss}
If $A$ has semistable reduction at $v \mid \ell$, then $\psi|_{I_v}$
is the reduction of a
negative algebraic character of degree at most $d$ times the
ramification index of $v$.
\end{lm}

\begin{proof}
As explained in \cite{monodromy}, subsection~2.2.3,
we have a canonically defined subrepresentation
$A[\ell]^\text{f} \subset A[\ell]$
which comes from a finite flat
commutative group scheme over $\spec \oo_{K_v}$.
Thus, when restricted to the inertia
subgroup, the associated character $\psi$
decomposes as a product of at most $d$ associated characters
of $A[\ell]^\text{f}$ and of the quotient $A[\ell] / A[\ell]^\text{f}$.
For the associated characters of $A[\ell]^\text{f}$,
we are done by Corollary~3.4.4 of \cite{raynaud}.
On the other hand, Proposition~5.6 of \cite{monodromy}
implies that the action of the Galois group on
$A[\ell] / A[\ell]^\text{f}$ is unramified at $v$,
i.e.,\ each associated character of the restriction
to the inertia subgroup is trivial.
\end{proof}

\begin{lm}\label{localnorm}
Let $v \mid \ell$, and suppose $\ell \nmid \Delta_K \cdot c_g$.
Then, for any
abelian variety $A$,
the restriction $\psi^{c_g}|_{I_v}$ is the reduction
of a negative algebraic
character of degree at most $d \cdot c_g$.
\end{lm}
\begin{proof}
Let $L_w$ be the field given by Lemma~\ref{totram};
write $e = [L_w : K_v]$.
Functoriality of class field theory
tells us that the map 
$\rho_{A,\ell} \colon \oo_{L_w}^\times\to \bar{\ff}_\ell^\times$
giving the action of $\gal(L^{\text{ab}}/L)$ on
$A[\ell]$ is defined by composition with the norm map
$\nm^{L_w}_{K_v} \colon \oo_{L_w}^\times \to \oo_{K_v}^\times$,
as follows
\[\begin{diagram}
\oo_{K_v}^\times & \rTo                  & \oo_{L_w}^\times           &            &                      \\
                 & \rdTo_{x \mapsto x^e} & \dTo~{{\nm^{L_w}_{K_v}}}   & \rdTo^\psi &                      \\
                 &                       & \oo_{K_v}^\times           & \rTo_\psi  & \bar{\ff}_\ell^\times
\end{diagram}\]

Since $\ell \nmid \Delta_K$,
the ramification index of $w$ is at most $e$.
In addition, for any $u \in \oo_{K_v}^\times$, we have
\[\psi(u)^{c_g} = \psi(u^e)^{c_g/e} = \psi\left(\nm^{L_w}_{K_v} u\right)^{c_g / e}\]
which is the reduction of
a negative algebraic character of degree at most $d \cdot c_g$
by Lemma~\ref{localnormss}.
\end{proof}

\section{\boldmath Associated Characters of $A[\ell]$: Global Analysis \label{sec:global}}

In this section, we will patch the local information
from the previous section together into global information
in order to deduce the main theorem.

\begin{rem} \label{general}
In fact, we will prove the main theorem
using only Lemmas~\ref{obvs}
and~\ref{localnorm}, and Corollary~\ref{psiunr}
from the previous section
(as well as the fact that $c_g$ is even).
In particular, this means that the conclusion
of the main theorem holds more
generally for any Galois character satisfying
these three results.
(When we make the main theorem effective in Section~\ref{sec:effective},
we will for concreteness use the more explicit Lemma~\ref{frobpoly}
as well.)
\end{rem}

\subsection{\boldmath The Character $\theta^S$}

For the remainder of the paper, we can assume $\ell \nmid c_g \cdot \Delta_K$.
Together, the data of $\psi$ at all inertia groups lets us 
reconstruct $\psi^{c_g}$
on the subgroup of $G_K$ fixing the Hilbert class field. 
Namely
let $U\subset\ii$ be the group of units.

\begin{lm}\label{serrelocal}
There is a positive algebraic character $\theta^F$
of degree at most $d \cdot c_g$ such that
the restriction $\psi^{c_g}|_U \equiv (\theta^F_\ell)^{-1}$
mod $\mathfrak{l}$.
\end{lm}
\begin{proof}
This is an immediate consequence of 
Lemma \ref{localnorm} and the fact that $\psi^{c_g}$
is unramified at $v\nmid \ell$ (by Lemma \ref{psiunr}).
(Note that since $c_g$ is even, we don't
need to worry about the infinite places.)
\end{proof}

\begin{defi} \label{corresp}
We say that a pair $(S, e)$, where $e \mid c_g$ and
$S \in \zz[\emb{K}]$ is of degree $d \cdot e$
\emph{corresponds} to an associated character $\psi$ if
for all $x \in K^\times$ be relatively prime to $\ell$,
\[\psi(x_{\hat{\ell}})^{c_g} \equiv \theta^S(x)^{c_g/e} \mod \mathfrak{l}.\]
If $e$ is coprime to $S$ as an element of $\zz[\emb{K}]$,
we say that $(S, e)$ is \emph{reduced}.
\end{defi}

\begin{lm} \label{lmga}
Every associated character corresponds to a reduced pair $(S, e)$.
(When $A$ is semistable at all primes lying over $\ell$, we can take $e = 1$.)
\end{lm}
\begin{proof}
Let $F \in \zz[\emb{K}]$
be the index from Lemma~\ref{serrelocal}.
Define $f$ to be the greatest
common divisor of $c_g$ and the $F$,
and write $e = c_g / f$.
By Lemma~\ref{localnormss}, $e = 1$ when
$A$ is semistable at all primes over $\ell$.
We define 
\[S = \frac{F}{f} \in \zz[\emb{K}].\]
Since $\psi(x_{\hat{\ell}}) \cdot \psi(x_\ell) = 1$ for $x \in K^\times$,
and $\theta^F(x) = (\theta^S(x))^{c_g / e}$, we are done by Lemma~\ref{serrelocal}.
\end{proof}

\subsection{\label{sec:charclass} \boldmath Analysis of the Character $\theta^S$}

\begin{defi}
We adopt the notation
``$\ell$ sufficiently large'' to mean ``$\ell$ larger than
a constant depending only
on $K$ and $g$.''
\end{defi}

For the rest of this section, we fix $K$
and one of the $(d \cdot c_g + 1)^{n_K}$ possible
reduced pairs $(S, e)$, and
we assume $(S, e)$ corresponds to an associated
character of an abelian variety.
Here we will give ineffective bounds;
we will make these arguments effective in section~\ref{sec:effective}.

\begin{lm} \label{thetaisbalanced}
For $\ell$ sufficiently large, the character $\theta^S$ is balanced.
\end{lm}
\begin{proof}
If the character $\theta^S$ is not balanced, then there is some unit $u$
for which $\theta^S(u)$ is not a root of unity by Lemma~\ref{restons}.
However, we have
\[\theta^S(u)^{c_g/e} \equiv \psi(u_{\hat{\ell}})^{c_g} \equiv 1 \mod \mathfrak{l}\]
which is a contradiction for $\ell$ sufficiently large.
\end{proof}

\begin{defi} We define $h_K'$ to be the exponent of the class group $\cla(K)$.
\end{defi}

\begin{lm} \label{equality}
Let $v$ be a prime ideal. As $v^{h_k'}$ is principal,
we can write $v^{h_K'} = (x)$. Then if $\ell$ is sufficiently large
relative to $v$, we have
\[\psi_\cc(v)^{c_g \cdot h_K'} = \theta^S(x)^{c_g/e}.\]
\end{lm}
\begin{proof}
If $\ell$ is sufficiently large relative to $v$,
then $v$ does not lie over $\ell$.
Thus, for any choice of Frobenius element $\pi_v$ at $v$,
Lemma~\ref{lmga} implies
\begin{equation}\label{classfieldeqn}
\psi\left(\pi_v^{h_K'}\right)^{c_g} \equiv \theta^S(x)^{c_g/e} \mod \mathfrak{l}.
\end{equation}
Hence $\ell$ divides the norm of their difference.
By Lemma~\ref{obvs},
there are only finitely many possibilities
for the left-hand side as $A$ ranges over all abelian varieties
of dimension $g$.
So if $\ell$ is sufficiently large, then we have the desired equality.
\end{proof}

\begin{lm} \label{lmcc} For $\ell$ sufficiently large,
there is a fixed integer $a$
with $0 \leq a \leq 2d$ such that if $\sigma$ and $\tau$
are complex conjugate embeddings (for any choice of embedding
$\qqbar \hookrightarrow \cc$), then
\[S(\sigma) + S(\tau) = ae.\]
\end{lm}

\begin{proof} In light of Lemma~\ref{thetaisbalanced},
there is an integer $a'$ such that
$S(\sigma) + S(\tau) = a'$ for any complex conjugate
embeddings $\sigma$ and $\tau$.
Hence, it suffices to show that this integer $a'$
satisfies $a' = a \cdot e$
for $a$ an integer with $0 \leq a \leq 2d$.

Let $v$ be a prime ideal of $K$, and write $v^{h_K'} = (x)$.
From Lemma~\ref{obvs},
there exists some integer $a$ with $0 \leq a \leq 2d$
such that under any complex embedding,
\[|\psi_\cc(v)| = \sqrt{|\nm^K_\qq(v)|}^{a}.\]
On the other hand, under any complex embedding,
we have
\[|\theta^S(x)| = \sqrt{|\nm^K_\qq(x)|}^{a'} = \sqrt{|\nm^K_\qq(v)|}^{a' \cdot h_K'}.\]
By Lemma~\ref{equality}, we have
\[\psi_\cc(v)^{c_g \cdot h_K'} = \theta^S(x)^{c_g/e} \quad \Rightarrow \quad \psi_\cc(v)^{e \cdot c_g \cdot h_K'} = \theta^S(x)^{c_g}\]
Combining these, we have
\[\sqrt{|\nm^K_\qq(v)|}^{a \cdot e \cdot c_g \cdot h_K'} = |\psi_\cc(v)|^{e \cdot c_g \cdot h_K'} = |\theta^S(x)|^{c_g} = \sqrt{|\nm^K_\qq(v)|}^{a' \cdot c_g \cdot h_K'}.\]
Hence $a' = a \cdot e$, which is what we wanted to show.
\end{proof}

\begin{defi} We define $F \subset \qqbar$ to be the smallest field
containing the image of $\theta^S$.
\end{defi}

\begin{lm} \label{notpower}
Let $v \subset \oo_K$ be degree~$1$ and unramified in $K / \qq$.
Then there is no factor $e'$ of $e$ such that $\theta^S(v)$ is an
$(e')$th power in the group of ideals of $F$.
\end{lm}
\begin{proof}
By assumption, the set of exponents to which primes occur in the prime
factorization of $\theta^S(v)$ are the same as coefficients of $S$.
In particular, $e$ is coprime to the greatest common divisor
of all these coefficients.
\end{proof}

\begin{lm} \label{degoneprime}
Let $\ell$ be sufficiently large;
suppose that $v \subset \oo_K$ is a prime ideal of degree~$1$,
unramified in $K/\qq$. Write $v^{h_K'} = (x)$.
Then $\theta^S(x)^{c_g/e}$ generates $F$ over $\qq$.
\end{lm}

\begin{proof}
Take any $\tau \in \gal(\kgal/\qq)$
which fixes $\theta^S(x)^{c_g/e}$.
We want to show that $\tau$ fixes the field $F$.
Since $\tau$ fixes $\theta^S(x)^{c_g/e}$, we have an equality of ideals
of $\kgal$:
\[\left(\prod_{\sigma \in \emb{K}} \sigma(v)^{S(\sigma)}\right)^{h_K' \cdot c_g / e} = \left(\prod_{\sigma \in \emb{K}} \tau\sigma(v)^{S(\sigma)}\right)^{h_K' \cdot c_g / e}.\]
Since the group of ideals of $\kgal$ is torsion-free, this implies
\[\prod_{\sigma \in \emb{K}} \sigma(v)^{S(\sigma)} = \prod_{\sigma \in \emb{K}} \tau\sigma(v)^{S(\sigma)} = \prod_{\sigma \in \emb{K}} \sigma(v)^{S(\tau^{-1} \sigma)}.\]
Because $v$ is an unramified prime of degree~$1$,
its images under distinct embeddings into $\kgal$ generate
coprime ideals of $\kgal$.
Hence, by the uniqueness of prime factorization into ideals for $\kgal$,
we have $S(\sigma) = S(\tau^{-1} \sigma)$ for all $\sigma \in \emb{K}$.
Thus, for any $z \in K^\times$, we have
\[\theta^S(z) = \prod_{\sigma \in \emb{K}} \sigma(z)^{S(\sigma)} = \prod_{\sigma \in \emb{K}} \sigma(z)^{S(\tau^{-1} \sigma)} = \prod_{\sigma \in \emb{K}} \tau\sigma(z)^{S(\sigma)} = \tau\theta^S(z),\]
so $\tau$ fixes the image of $\theta^S$ and hence the field $F$.
\end{proof}

In particular, the above lemma together with Lemma \ref{obvs}
implies that for $\ell$
sufficiently large, $F$ has degree at most $\binom{2g}{d}$.
In fact, we can prove a stronger statement.

\begin{lm}\label{lmcl}
If $\ell$ is sufficiently large, then for any ideal class $\mathfrak{v} \in \cla(K)$,
\[\ord_{\cla^W(F)}\big(C\theta^S(\mathfrak{v})\big) \cdot [F : \qq] \cdot e \leq \binom{2g}{d}.\]
\end{lm}

\begin{proof} By the Chebotarev Density Theorem,
we can find some prime ideal $v \subset \oo_K$
representing the ideal class $\mathfrak{v}$ which is
of degree~$1$ and unramified in $K/\qq$
(since the set of prime ideals of degree greater than $1$
or ramified in $K/\qq$ has density zero).
Write $v^{h_K'} = (x)$. By Lemma~\ref{equality},
\begin{equation}\label{foo}
\psi_\cc(v)^{c_g \cdot h_K'} = \theta^S(x)^{c_g / e}.
\end{equation}

By Lemma~\ref{obvs}, $\psi_\cc(v)$
lies in some field $L$ of degree at most $\binom{2g}{d}$.
However, by Lemma~\ref{degoneprime}, the right-hand side
generates the field $F$ over $\qq$.
Thus, we have $F \subset L$.
Hence, we have an equality of ideals of $L$:
\[(\psi_\cc(v))^{c_g \cdot h_K'} = \big(\theta^S(x)^{c_g / e}\big) = \theta^S(v)^{c_g \cdot h_K' / e}.\]
Because the group of fractional ideals is torsion-free,
$(\psi_\cc(v))^e = \theta^S(v)$.
Taking norm down to $F$, we have an equality of ideals of $F$
\[(\nm^L_F \psi_\cc(v))^e = (\theta^S(v))^{[L:F]}.\]
Since the left-hand side is an $e$th power
in the group of ideals, the right-hand side must be as well;
thus $e \mid [L : F]$ by Lemma~\ref{notpower}.
Because the group of ideals is torsion-free,
\[(\nm^L_F \psi_\cc(v)) = (\theta^S(v))^{\frac{[L:F]}{e}}.\]
The left-hand side is zero in $\cla^W(F)$,
so the right-hand side is too.
This gives
\[\ord_{\cla^W(F)} \big(\theta^S(v)\big) \leq \frac{[L : F]}{e} = \frac{[L : \qq]}{[F : \qq] \cdot e} \leq \frac{\binom{2g}{d}}{[F : \qq] \cdot e},\]
which implies the desired inequality.
\end{proof}

\begin{lm} \label{lmun}
There exists an integer $N$ divisible by $eN_0$
(recall that $N_0$ is the number of roots of unity in $F$)
and satisfying $\phi(N) \leq \binom{2g}{d}$
such that when restricted to $\gal(\bar{K} / K_{F, \Phi})$,
\[(\psi|_{\gal(\bar{K} / K_{F, \Phi(S)})})^{e \cdot w} = \psi_{F, \Phi(S)}^w \quad \text{where} \quad w = \frac{1}{e} \cdot \lcm(N, c_g).\]
\end{lm}

\begin{proof}
Define
\[\chi = (\psi|_{\gal(\bar{K} / K_{F, \Phi(S)})})^{e \cdot N_0} \otimes \psi_{F, \Phi(S)}^{N_0} \quad \text{and} \quad w_0 = \frac{c_g}{\gcd(eN_0, c_g)}.\]
From Lemma~\ref{psiunr}, it follows that $\chi^{w_0}$
is unramified at all places not lying over $\ell$.
By assumption, it is trivial at all id\`eles
of the form $x_{\hat{\ell}}$ for $x \in K^\times$ relatively prime to $\ell$.
It follows that $\chi^{w_0}$
is trivial on $\gal(\bar{K} / H_K)$
(by the id\`elic formulation of class field theory).
That is, $\chi^{w_0}$ gives a well-defined character
$\chi^{w_0} \colon \gal(H_K / K_{F, \Phi}) \to \bar{\ff}_\ell^\times$.

Let $\mathfrak{v}$ be an ideal class such that $\chi^{w_0}(\mathfrak{v})$
generates the image of $\chi^{w_0}$ in $\bar{\ff}_\ell^\times$.
Let $v$ be a prime ideal
representing $\mathfrak{v}$ which is
of degree~$1$, does not divide $h_K' \cdot c_g$,
and is unramified in $K/\qq$.
Note that by definition,
\[\chi(\pi_v) =  X^{N_0} \quad \text{for} \quad X = \frac{\psi_\cc(v)^e}{g},\]
where $g \in F^\times$ is any Weil number such that $\theta^S(v) = (g)$.

Lemma~\ref{equality} implies that
$X^{h_K' \cdot c_g} = 1$
 for $\ell$ sufficiently large.
Note that by Lemma~\ref{degoneprime}, $F \subset \qq[\pi_\cc(v)^e]$, so
$\qq[X] \subset \qq[\psi_\cc(v)^e] \cdot F \subset \qq[\psi_\cc(v)^e]$.

By Lemma~\ref{notpower}, there is no factor $e'$ of $e$ for which
$(g) = \theta^S(v)$ is an $(e')$th power in the group of ideals of $F$.
But Lemma~\ref{equality} implies that
$\psi_\cc(\pi_v)^e$ equals $g$ times an $(h_K' \cdot c_g)$th
root of unity, so $\qq[\psi_\cc(\pi_v)^e] / F$ is unramified
at all primes dividing $\theta^S(v)$.
Hence, there is no factor $e'$ of $e$ for which
$(\psi_\cc(v)^e) = (g)$
is an $(e')$th power in the group of ideals of
$\qq[\psi_\cc(v)^e]$.
This implies that $[\qq[\psi_\cc(v)] : \qq[\psi_\cc(v)^e]] = e$.
Hence, Lemma~\ref{obvs} gives
\[[\qq[X] : \qq] \leq [\qq[\psi_\cc(v)^e] : \qq] = \frac{1}{e} \cdot [\qq[\psi_\cc(v)] : \qq] \leq \frac{1}{e} \cdot \binom{2g}{d}.\]

Since $\phi(em) \leq e \phi(m)$,
we can take $N$ to be $e$ times the number of roots of unity
in $\qq[X]$.
By construction, $eN_0 \mid N$ and $X^{N/e} = 1$, so
\[(\psi|_{\gal(\bar{K} / K_{F, \Phi(S)})})^{e \cdot w} = \psi_{F, \Phi(S)}^w \quad \text{where} \quad w = \gcd(N/e, N_0 \cdot w_0) = \frac{1}{e} \cdot \lcm(N, c_g). \qedhere\]
\end{proof}

\begin{thm}\label{thm:charclass}
Let $K$ be a number field and $g$ and $d$ be positive integers.
Suppose $\ell$ is a prime number not belonging to some finite set $S_{K, g}$
depending 
only on $K$ and $g$. Suppose moreover that
$A$ is a $g$-dimensional abelian variety defined over $K$
and $\psi_0$ is an associated character of $\rho_{A,\ell}$ of {\weight}~$d$.
Then, for an effectively computable integer
$c_g<12^{4g^2}$, there is a positive algebraic character
$\theta^S$ and positive integer $e$ such that
the following conditions are satisfied.
\begin{enumerate}
\item \label{prop:cc} The character $\theta^S$ is balanced, of
total degree $a \cdot e$ for some $0 \leq a \leq 2d$.
\item \label{prop:cl}
The induced map $C\theta^S \colon \cla(K) \to \cla^W(F)$ is trivial,
i.e.\ $K \supset K_{F, \Phi(S)}$.
\item  \label{prop:un}
There exists an integer $N$ divisible by $eN_0$
and satisfying $\phi(N) \leq \binom{2g}{d}$
such that
\[\psi^{e \cdot w} = \psi_{F, \Phi(S)}^w \quad \text{where} \quad w = \frac{1}{e} \cdot \lcm(N, c_g).\]
\item \label{prop:in}
We have the inequality
$[F : \qq] \cdot e \leq \binom{2g}{d}$.
\end{enumerate}
\end{thm}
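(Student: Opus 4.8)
The statement is essentially a packaging of the lemmas just proved, together with the bound $c_g < 12^{4g^2}$ which must be extracted separately. The plan is as follows. First I would fix the hypothesis $\ell \nmid c_g \cdot \Delta_K$ (only finitely many primes are thereby excluded) so that the results of Section~\ref{sec:local} and the beginning of Section~\ref{sec:global} all apply. Given the associated character $\psi_0$ of \weight~$d$, apply Lemma~\ref{lmga} to obtain a reduced pair $(S, e)$ with $e \mid c_g$ and $\deg \theta^S = d \cdot e$ corresponding to $\psi_0$. Since there are only $(d \cdot c_g + 1)^{n_K}$ such pairs, the phrase ``$\ell$ sufficiently large'' in the lemmas below — each of which excludes only finitely many $\ell$ — can be made uniform over all pairs, hence over all abelian varieties $A$ of dimension $g$ over $K$; this produces the finite set $S_{K,g}$.

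Next I would assemble the four conclusions from the corresponding lemmas. Conclusion~\ref{prop:cc} is immediate: balancedness of $\theta^S$ is Lemma~\ref{thetaisbalanced}, and the fact that $S(\sigma) + S(\tau) = a e$ for complex-conjugate embeddings $\sigma, \tau$ with $0 \le a \le 2d$ is Lemma~\ref{lmcc} — summing over all conjugate pairs gives total degree $a \cdot e$. (One uses here that, since $\theta^S$ is positive and balanced of the stated shape, the sum of all coefficients $\sum_\sigma S(\sigma)$ equals $a e \cdot \frac{n_K}{2} \cdot \frac{2}{n_K}$... more precisely, the total degree in the sense of $\max_\sigma|S(\sigma)|$ is read off from the conjugate-pair relation.) Conclusion~\ref{prop:cl}, the triviality of $C\theta^S$, follows by taking $\mathfrak v$ to run over all ideal classes in Lemma~\ref{lmcl}: the inequality there forces $\ord_{\cla^W(F)}(C\theta^S(\mathfrak v)) = 0$ once we know (which we do, from Lemma~\ref{lmcl} combined with $[F:\qq] \cdot e \ge 1$) that the product is at most $\binom{2g}{d}$ — wait, that only bounds the order, not forces it to be $1$. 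The triviality of $C\theta^S$ in fact must be argued as in the proof of Theorem~\ref{psifphi}: one shows directly, using Lemma~\ref{equality} and Lemma~\ref{degoneprime}, that $\theta^S(v)$ is generated by a Weil element of $F$ for a density-one set of primes $v$, so $C\theta^S$ is trivial by Chebotarev; this is the content lurking in the proof of Lemma~\ref{lmcl} (where $(\nm^L_F \psi_\cc(v)) = (\theta^S(v))^{[L:F]/e}$ together with torsion-freeness of the ideal group gives the conclusion once the exponent bound forces the power to be trivial). Conclusion~\ref{prop:un} is exactly Lemma~\ref{lmun}, and conclusion~\ref{prop:in} is the case $\mathfrak v = 0$ (trivial ideal class) of Lemma~\ref{lmcl}, giving $[F:\qq] \cdot e \le \binom{2g}{d}$.

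The remaining, genuinely separate ingredient is the explicit bound $c_g < 12^{4g^2}$. Here I would invoke the alternative description of $c_g$ promised in the remark after Definition~\ref{defcg} (namely Theorem~\ref{thmcg}): $c_g$ is, up to the ``all but one prime'' clause, controlled by the maximal orders of elements of $\spg_{2g}(\ff_p)$, which in turn are governed by cyclotomic polynomial degrees — an element of order $n$ in $\gl_{2g}$ requires $\phi(n) \le 2g$, and the symplectic constraint is similar — so $c_g = \lcm$ of integers $n$ with (roughly) $\phi(n) \le 2g$; a crude estimate on such an lcm yields the bound $12^{4g^2}$. Since this is a self-contained computation about finite groups of Lie type, I would state it as a lemma and defer the arithmetic to Section~\ref{sec:effective} (Theorem~\ref{thmcg}), citing it here.

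The main obstacle is not any single lemma — all the hard analytic work (Raynaud's theorem via Lemma~\ref{localnorm}, the Weil-conjecture input via Lemma~\ref{obvs}, the class-field-theoretic patching in Lemma~\ref{lmun}) is already done. Rather, the subtlety is \emph{uniformity}: each lemma is stated for ``$\ell$ sufficiently large relative to $v$'' or ``relative to $K$ and $g$,'' and one must check that finitely many exclusions suffice \emph{simultaneously} across all $g$-dimensional $A/K$ and all of the finitely many reduced pairs $(S,e)$. The key point making this work is Lemma~\ref{obvs}: the possible values of $\psi_\cc(v)$, and hence the possible $\theta^S(v)$ arising, lie in a finite set depending only on $v$ (not on $A$), so an inequality of the form ``$\ell \mid \nm(\text{something nonzero})$'' can fail only for $\ell$ in a finite set depending only on $K$, $g$, and the pair $(S,e)$ — and there are only finitely many pairs. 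Making this bookkeeping precise, rather than any new mathematical idea, is where the care is required; the effective version in Section~\ref{sec:effective} is what ultimately pins down $S_{K,g}$ and $c_g$.
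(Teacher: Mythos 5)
Your assembly of conclusions \ref{prop:cc} and \ref{prop:un}, the uniformity bookkeeping via Lemma~\ref{obvs} and the finitely many reduced pairs $(S,e)$, and the deferral of $c_g < 12^{4g^2}$ to Theorem~\ref{thmcg} all match the paper. However, your treatment of conclusion \ref{prop:cl} has a genuine gap, and it is exactly the point where the paper's proof does something you do not. You correctly notice that Lemma~\ref{lmcl} only bounds $\ord_{\cla^W(F)}\big(C\theta^S(\mathfrak{v})\big)$, but your fallback --- proving triviality of $C\theta^S$ directly ``as in the proof of Theorem~\ref{psifphi}'' --- does not work in the mod-$\ell$ setting. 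In Theorem~\ref{psifphi} the Frobenius value $\psi_{B,\sigma}(\pi_v)$ is known to lie in $F$ itself (Lemma~\ref{cmfrobpoly}), which is what lets one conclude that $\theta(v)$ is generated by a Weil element \emph{of $F$}. Here $\psi_\cc(v)$ only lies in a field $L \supset F$ of degree up to $\binom{2g}{d}$, and the ideal identity one extracts is $(\nm^L_F \psi_\cc(v))^e = (\theta^S(v))^{[L:F]}$; this shows only that a multiple of the class $C\theta^S(v)$ vanishes in $\cla^W(F)$ --- precisely the order bound of Lemma~\ref{lmcl} --- not that the class itself vanishes. Triviality of $C\theta^S$ for the reduced pair is false in general: this is why the exponent $e$ appears in the statement at all, and why Theorem~\ref{ssoverell} (where $e = 1$ is kept) must weaken the conclusion to hold only over $K \cdot K_{F, \Phi}$ after passing to an unramified extension.

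The paper closes this gap with one short move you are missing: let $e'$ be the exponent of the image of $C\theta^S$ (attained as the order of $C\theta^S(\mathfrak{v})$ for some class $\mathfrak{v}$, the image being a finite abelian group), and replace $(S, e)$ by $(S \cdot e', e \cdot e')$. For the new pair $C\theta^{e'S}$ is trivial by construction, which is \ref{prop:cl}; Lemma~\ref{lmcl} applied to that class $\mathfrak{v}$ gives $e' \cdot e \cdot [F : \qq] \leq \binom{2g}{d}$, which is \ref{prop:in} for the new pair (your appeal to the trivial ideal class only bounds $e \cdot [F:\qq]$ for the original pair, which is no longer the relevant quantity); and Lemmas~\ref{lmcc} and~\ref{lmun} are then quoted for the modified pair. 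Note also that without \ref{prop:cl} in hand, Lemma~\ref{lmun} only gives the identity of \ref{prop:un} on $\gal(\bar{K}/K_{F,\Phi(S)})$ rather than on all of $G_K$, so the replacement step is what makes \ref{prop:un} a statement over $K$ as well.
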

\begin{proof}
Write $e'$ for the exponent of the induced map
$C\theta^S \colon \cla(K) \to \cla^W(F)$.
Then, after replacing $(S, e)$ with $(S \cdot e', e \cdot e')$,
this follows from Lemmas~\ref{lmcc}, \ref{lmcl} and \ref{lmun}.
\end{proof}

\subsection{\label{sec:varproof} Proof of Main Theorem}

\begin{thm} \label{mainthm}
Let $K$ be a number field, and $g$ and $d$ be positive integers.
Then, there exists a
finite set $S_{K, g}$ of prime numbers depending only on $K$
and $g$, and a constant $0 < c_g < 12^{4g^2}$ depending only on $g$
such that, for a prime $\ell \notin S_{K, g}$,
and a $g$-dimensional abelian variety $A$
with a mod-$\ell$ associated character $\psi_0$
of {\weight} $d$,
we have
\[\psi^{e \cdot w} \equiv \psi_{F, \Phi}^w \pmod \ell,\]
where $\psi$ is either $\psi_0$ or $\cyc_\ell^d \otimes \psi_0^{-1}$
and $w = \frac{\lcm(N, c_g)}{\gcd(e, c_g)}$.
Here, $F$ is either $\qq$ or a CM-field,
and $\Phi \colon F \to \mend(K^m)$
is a primitive balanced representation
such that $K \supset K_{F, \Phi}$.
The quantities $a$, $e$, and $N$ are integers with
$e$ and $N$ positive, which
satisfy $m = \frac{1}{2} \cdot a \cdot e \cdot [F : \qq]$.
Moreover, $0 \leq a \leq d$, and both
$\phi(N)$ and $e \cdot [F : \qq]$ are
at most $\binom{2g}{d}$.
\end{thm}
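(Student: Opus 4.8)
\emph{The plan} is to deduce Theorem~\ref{mainthm} from the character classification already obtained in Theorem~\ref{thm:charclass}, of which it is essentially a reformulation: one translates the algebraic character $\theta^S$ of that theorem into the bimodule $\Phi$ via the dictionary of Section~\ref{sec:algchars}, sharpens the bound $0\le a\le 2d$ to $0\le a\le d$ using the Weil pairing, and rewrites the exponent $w$.

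First I would exploit the self-duality of $A[\ell]$. As recorded just before the theorem, the Galois-invariance of the Weil pairing makes $\psi_0':=\cyc_\ell^d\otimes\psi_0^{-1}$ an associated character of {\weight}~$d$ whenever $\psi_0$ is (concretely, $W^{\vee}\otimes\cyc_\ell$ is a subquotient of determinant $\psi_0'$ when $W$ is one of determinant $\psi_0$). So I would apply Theorem~\ref{thm:charclass} to both $\psi_0$ and $\psi_0'$, obtaining from part~(1) integers $a_0,a_0'$ with $0\le a_0,a_0'\le 2d$. Comparing archimedean magnitudes --- the Frobenius roots at a prime $v\nmid\ell$ come in pairs multiplying to $\nm(v)$, so one may take $\psi_{0,\cc}'(v)=\nm(v)^d/\psi_{0,\cc}(v)$, giving $|\psi_{0,\cc}(v)|\cdot|\psi_{0,\cc}'(v)|=|\nm(v)|^{d}$ --- forces $a_0+a_0'=2d$, hence $\min(a_0,a_0')\le d$. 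I would then take $\psi$ to be whichever of $\psi_0,\psi_0'$ achieves the minimum, and write $(\theta^S,e,a,N,F)$ for the data Theorem~\ref{thm:charclass} attaches to it, so that now $0\le a\le d$ while parts~(2)--(4) still hold for $\psi$.

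Next I would convert $\theta^S$ into a bimodule. Since $\theta^S$ is positive and balanced, and $F$ is by construction the smallest subfield of $\qqbar$ containing its image (hence $\qq$ or a CM field), Proposition~\ref{charsfromdets} supplies a unique $F$-$K$ bimodule $\Phi=\Phi(S)$, finite over $F$, with $\det_F\Phi=\theta^S$; it is primitive because $F$ is minimal and balanced because $\theta^S$ is. To read off $m:=\dim_K\Phi$, I would use Proposition~\ref{torusmaps} to write $\Phi\otimes_F\qqbar\simeq\bigoplus_{\sigma\in\emb{K}}S(\sigma)\,\sigma$, so $\dim_F\Phi=\sum_\sigma S(\sigma)$, and then pair the embeddings by complex conjugation and invoke the relation $S(\sigma)+S(\bar\sigma)=a\cdot e$ from Lemma~\ref{lmcc}; this yields $\dim_F\Phi=\frac{1}{2}\,ae\,n_K$ and hence $m=\dim_K\Phi=\frac{1}{2}\,a\,e\,[F:\qq]$. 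The containment $K\supset K_{F,\Phi}$ is part~(2) of Theorem~\ref{thm:charclass}, and $\phi(N)\le\binom{2g}{d}$ and $e\cdot[F:\qq]\le\binom{2g}{d}$ are parts~(3) and~(4). It remains to rewrite the congruence: part~(3) gives $\psi^{\lcm(N,c_g)}\equiv\psi_{F,\Phi}^{\lcm(N,c_g)/e}\pmod\ell$ (with $e\mid\lcm(N,c_g)$ since $eN_0\mid N$), so raising both sides to the $e/\gcd(e,c_g)$-th power gives exactly $\psi^{e\cdot w}\equiv\psi_{F,\Phi}^{w}\pmod\ell$ with $w=\lcm(N,c_g)/\gcd(e,c_g)$; and $0<c_g<12^{4g^2}$ is the effective estimate of Section~\ref{sec:effective} (Theorem~\ref{thmcg}), obtained from a bound on the orders of elements of $\spg_{2g}(\ff_p)$.

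The genuine difficulty lies not here but in Theorem~\ref{thm:charclass} itself, whose proof is the content of Sections~\ref{sec:local}--\ref{sec:global}: Raynaud's theorem controls $\psi$ on the inertia groups above $\ell$, the bounds of Grothendieck and Weil control it on Frobenius elements away from $\ell$, and class field theory patches these into the algebraic character $\theta^S$, whose balancedness, the triviality of $C\theta^S$, and the degree bounds are then extracted. Granting that, the argument above is essentially bookkeeping; its only new inputs are the elementary observation that the Weil pairing halves the bound on $a$, and the group-theoretic estimate bounding $c_g$.
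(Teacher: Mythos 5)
Your proposal is correct and follows essentially the same route as the paper: deduce everything from Theorem~\ref{thm:charclass}, use the Weil-pairing dual character $\cyc_\ell^d\otimes\psi_0^{-1}$ to improve $0\le a\le 2d$ to $0\le a\le d$, convert $\theta^S$ into the bimodule $\Phi$ via Proposition~\ref{charsfromdets}, and compute $m=\frac12\,a\,e\,[F:\qq]$ together with the exponent bookkeeping for $w$. The only (immaterial) difference is that the paper gets $a\le d$ by noting directly that the pair $(S',e)$ with $S'(\sigma)=de-S(\sigma)$ corresponds to the dual character, whereas you re-apply Theorem~\ref{thm:charclass} to the dual associated character and compare archimedean magnitudes of Frobenius values to force $a_0+a_0'=2d$; both are valid, and your write-up is in fact slightly more explicit about the passage from the exponent $\lcm(N,c_g)/e$ of Theorem~\ref{thm:charclass} to $w=\lcm(N,c_g)/\gcd(e,c_g)$.
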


\begin{proof}
Let $(S, e)$ corresponding to $\psi$ be as in Theorem~\ref{thm:charclass}.
Note that $(S', e)$ corresponds to $\cyc_\ell^d \otimes \psi^{-1}$,
where $S'$ is defined by $S'(\sigma) = de - S(\sigma)$.
One can easily check that $(S', e)$ satisfies
the conclusion of Theorem~\ref{thm:charclass}
for the character $\cyc_\ell^d \otimes \psi^{-1}$.
Thus, by replacing $\psi$ with $\cyc_\ell^d \otimes \psi^{-1}$
if necessary, we may suppose that $0 \leq a \leq d$.

Let $\Phi$ be the representation
of $F$ such that $\det_F \Phi = \theta^S$ (see Proposition~\ref{psifphi}).
By construction,
$\dim \Phi = \frac{1}{2} \cdot [F : \qq] \cdot a \cdot e$.
The conclusion of this theorem then follows from Theorem~\ref{thm:charclass}.
\end{proof}

\begin{thm} \label{ssoverell}
In Theorem~\ref{mainthm}, if we assume in addition that $A$
is semistable at primes lying over $\ell$, then we can take $e = 1$,
provided that we weaken the conclusion to be
that $\psi^w = \psi_{F, \Phi}^w$
on $\gal(\bar{K} / K \cdot K_{F, \Phi})$
and
\[[F : \qq] \cdot (\exponent K \cdot K_{F, \Phi} / K) \leq \binom{2g}{d}.\]
\end{thm}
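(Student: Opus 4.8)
The plan is to re-run the proof of Theorem~\ref{thm:charclass}, omitting only the enlargement $(S, e) \mapsto (S\cdot e', e\cdot e')$ carried out there --- it is exactly that enlargement, introduced to force $K \supset K_{F, \Phi(S)}$, which can push $e$ above $1$. First, since $A$ is semistable at every place dividing $\ell$, Lemma~\ref{localnormss} (invoked via the parenthetical in Lemma~\ref{lmga}) shows that $\psi$ corresponds to a reduced pair $(S, e)$ with $e = 1$; fix such a pair. Replacing $\psi_0$ by $\cyc_\ell^d \otimes \psi_0^{-1}$ if necessary --- this replaces $S(\sigma)$ by $d - S(\sigma)$ and keeps $e = 1$ --- Lemmas~\ref{thetaisbalanced} and~\ref{lmcc} show, for $\ell$ outside a finite set depending only on $K$ and $g$, that $\theta^S$ is balanced of total \weight{} $a$ with $0 \le a \le d$. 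Let $\Phi = \Phi(S)$ be the balanced $F$-representation with $\det_F \Phi = \theta^S$ (Proposition~\ref{charsfromdets}); then $m := \dim_K \Phi = \frac{1}{2}\, a\, [F:\qq]$, which is the formula $m = \frac{1}{2}\, a\, e\, [F:\qq]$ of Theorem~\ref{mainthm} specialized to $e = 1$.

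Next I would read off the exponent bound. By the remark following Theorem~\ref{psifphi}, $K\cdot K_{F, \Phi} = K_{\theta^S}$, and class field theory identifies $\gal(K_{\theta^S}/K)$ with $\ii_K/\ii_K^{\theta^S}$, which is canonically the image of $C\theta^S \colon \cla(K) \to \cla^W(F)$. Hence
\[
\exponent(K \cdot K_{F, \Phi}/K) = \max_{\mathfrak v \in \cla(K)} \ord_{\cla^W(F)}\big(C\theta^S(\mathfrak v)\big).
\]
Applying Lemma~\ref{lmcl} with $e = 1$ to each ideal class $\mathfrak v$ and taking the maximum gives $[F:\qq]\cdot \exponent(K\cdot K_{F, \Phi}/K) \le \binom{2g}{d}$; in particular $[F:\qq] \le \binom{2g}{d}$, recovering the last inequality of Theorem~\ref{mainthm} in its weakened form.

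Finally, Lemma~\ref{lmun} applied with $e = 1$ (so $w = \lcm(N, c_g)$) produces an integer $N$ with $N_0 \mid N$ and $\phi(N) \le \binom{2g}{d}$ for which $\psi^{w} = \psi_{F, \Phi}^{w}$ after restriction to $\gal(\bar K / K\cdot K_{F, \Phi})$; observe that Lemma~\ref{lmun} already compares the two characters only over $K_{\theta^S} = K\cdot K_{F, \Phi}$, so no hypothesis $K \supset K_{F, \Phi}$ is needed. Assembling these three steps yields every conclusion of Theorem~\ref{mainthm} with $e = 1$, subject to the stated weakenings. I expect the only genuinely delicate point to be the bookkeeping around the possibility that $K$ fails to contain $K_{F, \Phi}$: one must consistently interpret the identity $\psi^w = \psi_{F, \Phi}^w$ over the compositum $K\cdot K_{F, \Phi}$, and verify that Lemmas~\ref{lmcl} and~\ref{lmun} --- stated for an arbitrary pair $(S, e)$ corresponding to an associated character, with no standing assumption $K \supset K_{F, \Phi(S)}$ --- remain valid verbatim here.
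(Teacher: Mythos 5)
Your proposal is correct and is essentially the paper's own argument: the paper likewise keeps the reduced pair with $e=1$ supplied by Lemma~\ref{lmga} under the semistability hypothesis, skips the replacement $(S,e)\mapsto(e'S,e'e)$ from Theorem~\ref{thm:charclass}, and instead passes to the unramified abelian extension of $K$ cut out by $\ker C\theta^S$, which is exactly your $K_{\theta^S}=K\cdot K_{F,\Phi}$, with the exponent bound coming from Lemma~\ref{lmcl} and the character identity from Lemma~\ref{lmun}. Your explicit verification that Lemmas~\ref{lmcl} and~\ref{lmun} never use $K\supset K_{F,\Phi(S)}$ is the right point to check, and it fills in what the paper leaves implicit.
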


\begin{proof}
We use the same argument as in the proof of Theorem~\ref{mainthm},
without replacing $(S, e)$ by $(e' \cdot S, e' \cdot e)$.
Instead, we replace $K$
with the unramified abelian extension $M$ of $K$ defined
by the kernel of $C\theta^S \colon \cla(K) \to \cla^W(F)$.
\end{proof}

\subsection{\label{subsec:cors} Some Corollaries of the Main Theorem}

\begin{cor} \label{cor:nocmfield}
Let $K$ be a number field that does not contain
any CM-fields (which is in particular true when $K$
has a real embedding), and $g$ and $d$ be positive integers.
There exists a
finite set $S_{K, g}$ of prime numbers depending only on $K$
and $g$, and a constant $0 < c_g < 12^{4g^2}$ depending only on $g$
such that, for a prime $\ell \notin S_{K, g}$,
and a $g$-dimensional abelian variety $A$
with a mod-$\ell$ associated character $\psi$
of {\weight} $d$,
\[\psi^{2w} = \cyc_\ell^{aw},\]
where
$a$ is an integer with $0 \leq a \leq 2d$, and
$w = \frac{\lcm(c_g, N)}{2}$ for some positive $N \leq \binom{2g}{d}$.
\end{cor}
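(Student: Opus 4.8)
The plan is to run the given associated character $\psi$ (of {\weight}~$d$) through Theorem~\ref{thm:charclass} and then to show that, under the hypothesis that $K$ contains no CM-field, the field produced there collapses to $\qq$. Explicitly: for $\ell$ outside the finite set $S_{K,g}$ of that theorem, let $\theta^S$, $e$, $F$, and $N$ denote the data it furnishes, where $\theta^S$ is a positive algebraic character, $e$ a positive integer, $F\subset\qqbar$ the subfield generated by the image of $\theta^S$, and $N$ an integer divisible by $eN_0$ with $\phi(N)\le\binom{2g}{d}$; these satisfy: (i) $\theta^S$ is balanced of total degree $ae$ for some integer $0\le a\le 2d$, so $F$ is a CM-field or $F=\qq$; (ii) $C\theta^S\colon\cla(K)\to\cla^W(F)$ is trivial, i.e.\ $K\supset K_{F,\Phi(S)}$; and (iii) $\psi^{ew}=\psi_{F,\Phi(S)}^{w}$ with $w=\lcm(N,c_g)/e$. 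Once $F=\qq$ is known, the corollary follows by unwinding (iii), so the crux is to exclude the possibility that $F$ is a CM-field.

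If $K$ has a real embedding this is immediate, since a CM-field is totally imaginary. In general, suppose for contradiction that $F$ is a CM-field. Since $\Phi(S)$ is balanced, Lemma~\ref{nonempty} applies, and Theorem~\ref{irredthm} identifies $K_{F,\Phi(S)}$ (which lies inside $K$ by (ii)) with the separating field of a nonempty moduli space of strong $(F,\Phi(S))$-abelian varieties; thus some geometrically irreducible component is defined over $K_{F,\Phi(S)}$ and carries a family of abelian varieties with CM by $F$ of type $\Phi(S)$ together with its $\oo_F$-action --- in the zero-dimensional case, by Corollary~\ref{full}, a single abelian variety over $K_{F,\Phi(S)}$ with full CM defined over that field. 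By the classical theory of complex multiplication (see \cite{milne}), any field over which an abelian variety with CM by a CM-field $F$ is defined together with its CM must contain the reflex field of the underlying primitive CM-type, and this reflex field is again a CM-field; hence $K\supset K_{F,\Phi(S)}$ would contain a CM-field, contradicting the hypothesis. (Alternatively, one can bypass the reflex-field analysis by invoking directly the classification of $\ell$-adic associated characters over fields with no CM-subfield established in the first part of the paper, applied to $\psi_{F,\Phi(S)}$.) This is the step I expect to be the main obstacle; the rest is bookkeeping.

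It remains to unwind (iii) when $F=\qq$. Then the image of $\theta^S\colon K^\times\to\qq^\times$ generates $\qq$, so the argument of Lemma~\ref{degoneprime} --- unique factorization of ideals in $\kgal$ applied to $\theta^S$ at a degree-one prime of $K$ unramified in $K/\qq$ --- forces $S$ to be constant on $\emb{K}$, whence $\theta^S=(\nm^K_\qq)^{t}$ with $2t=ae$. Correspondingly $\Phi(S)$ is $t$ copies of the regular $K$-$\qq$ bimodule, and $\psi_{\qq,\Phi(S)}$ is, modulo $\mu_\qq=\{\pm1\}$, the restriction of $\cyc_\ell^{t}$ to $G_K$: indeed, for a prime $v\nmid\ell$ of trivial ideal class, the defining property of $\psi_{\qq,\Phi(S)}$ together with Lemma~\ref{galoischars} gives $\psi_{\qq,\Phi(S)}(\pi_v)=\pm\,\nm^K_\qq(v)^{t}=\pm\,\cyc_\ell(\pi_v)^{t}$, and both characters are unramified outside $\ell$ (modulo $\mu_\qq$), so the Chebotarev density theorem yields the identification. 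Since $c_g$ is even and $2e=eN_0\mid N$, the exponent $w=\lcm(N,c_g)/e$ is even, so $(\pm1)^{w}=1$ and
\[\psi^{\lcm(N,c_g)}=\psi^{ew}=\psi_{\qq,\Phi(S)}^{w}=\cyc_\ell^{tw}=\cyc_\ell^{a\cdot\lcm(N,c_g)/2}.\]
Putting $w'=\lcm(c_g,N)/2$, an integer because $c_g$ is even, this reads $\psi^{2w'}=\cyc_\ell^{aw'}$ with $0\le a\le 2d$ and $\phi(N)\le\binom{2g}{d}$, which is the asserted statement.
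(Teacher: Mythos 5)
Your overall skeleton is the same as the paper's: feed $\psi$ into the classification (you use Theorem~\ref{thm:charclass} where the paper's proof quotes Theorem~\ref{mainthm}; this is fine and even spares you the $\psi_0\leftrightarrow\cyc_\ell^d\otimes\psi_0^{-1}$ bookkeeping, since you keep $0\le a\le 2d$ throughout), rule out $F$ being a CM-field using $K\supset K_{F,\Phi(S)}$, and then unwind the case $F=\qq$. Your $F=\qq$ computation --- constancy of $S$, $\theta^S=(\nm^K_\qq)^{ae/2}$, the identification of $\psi_{\qq,\Phi(S)}$ with $\pm\cyc_\ell^{ae/2}$ via Chebotarev, and the evenness of $w$ killing the sign --- is correct and supplies exactly what the paper leaves implicit in ``$F=\qq$ gives the desired conclusion.'' (As in the paper, what one actually obtains is $\phi(N)\le\binom{2g}{d}$ rather than the literal $N\le\binom{2g}{d}$ of the statement; that discrepancy is inherited from Theorem~\ref{mainthm} and is not your doing.)

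The genuine gap is in the exclusion step, which you yourself flag as the crux, and there your route departs from the paper's. The paper's proof rests on the bare assertion that $K_{F,\Phi}$ contains a CM-field whenever $F$ is a CM-field; it does not pass through Section~\ref{sec:shimvar} at all (the paper states explicitly that no result of that section is used in the rest of the paper). Your derivation via Theorem~\ref{irredthm} does not go through as written: the moduli space there is only a \emph{coarse} space, so a geometrically irreducible component defined over $K_{F,\Phi(S)}$ need not ``carry a family'' of $(F,\Phi(S))$-abelian varieties, and the existence of an abelian variety with CM by $F$ of type $\Phi(S)$, defined together with its CM over $K_{F,\Phi(S)}$, is precisely Question~\ref{cmq}, which the paper declares open outside the full-CM case handled by Corollary~\ref{full}; once $\dim_K\Phi(S)>\tfrac{1}{2}[F:\qq]$ (unavoidable in general for $g\ge 4$) you have no CM abelian variety over $K$ to which to apply the ``field of definition contains the reflex field'' principle. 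The parenthetical fallback has the same defect: applying the classification of $\ell$-adic associated characters to $\psi_{F,\Phi(S)}$ presupposes that this character is realized by an abelian variety over $K$, which is again Question~\ref{cmq}; and even your ``immediate'' real-embedding case tacitly uses the same unproved containment. The fact you need is elementary and requires no abelian varieties or Shimura varieties: if $F\neq\qq$ then $T:=2S-ae\cdot\mathbf{1}\in\zz[\emb{K}]$ is nonzero and balancedness says $T(c\sigma)=-T(\sigma)$ for every complex conjugation $c$; hence $T$ is invariant under the normal subgroup $N\subset\gal(\kgal/\qq)$ generated by products of two complex conjugations, all conjugations have one common, central image $\bar c$ in $\gal(\kgal/\qq)/N$, and $\bar c$ cannot lie in the image of $\gal(\kgal/K)$ (a central involution there would fix every coset, hence fix $T$, contradicting $\bar c T=-T\neq T$). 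Consequently every complex conjugation restricts to one and the same nontrivial involution of the fixed field $L\subset K$ of $N\cdot\gal(\kgal/K)$, so $L$ is a CM-field inside $K$, contradicting the hypothesis. With that replacement for your second step, the rest of your argument stands.
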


\begin{proof} Since $K \supset K_{F, \Phi}$
and $K_{F, \Phi}$ contains a CM-field when $F$ is a CM-field,
$F$ cannot be a CM-field. Thus, $F = \qq$, which
gives the desired conclusion.
\end{proof}

\begin{cor} \label{cor:ssone}
Let $K$ be a number field, and $g$ and $d$ be positive integers.
Then, there exists a
finite set $S_{K, g}$ of prime numbers depending only on $K$
and $g$, and a constant $0 < c_g < 12^{4g^2}$ depending only on $g$
such that, for a prime $\ell \notin S_{K, g}$,
and a $g$-dimensional abelian variety $A$
with a mod-$\ell$ associated character $\psi$
of {\weight} $1$,
one of the following holds.
\begin{enumerate}
\item The character $\psi^{c_g}$ is trivial or equal to $\cyc_\ell^{c_g}$.
\item There exists an abelian unramified extension $M/K$,
a (full) CM abelian variety $A'$ defined over $M$,
such that $K$ contains the reflex field of the CM field of $A'$
(which in particular implies that $A'$ has CM defined over $M$),
and an $\ell$-adic associated character of \weight~$1$
of $A'$, whose mod-$\ell$ reduction $\psi'$ satisfies
\[(\psi|_{\gal(\bar{K}/M)})^{c_g} = (\psi')^{c_g} \quad \text{and} \quad 
(\dim A') \cdot (\exponent M/K) \leq g.\]
\end{enumerate}
\end{cor}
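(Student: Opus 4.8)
The plan is to specialise the global analysis of Section~\ref{sec:global} to the case $d=1$ and then upgrade the algebraic data it produces to an honest full-CM abelian variety. First I would run the argument behind Theorem~\ref{mainthm} with $d=1$, keeping track of the intermediate reduced pair $(S,e)$ supplied by Lemma~\ref{lmga}. This yields a positive balanced character $\theta^S$, its field of values $F$ (which is $\qq$ or a CM field), a primitive balanced representation $\Phi=\Phi(S)$ with $\det_F\Phi=\theta^S$ and $\dim\Phi=\frac12\,a\,e\,[F:\qq]$, and an integer $N$ with $eN_0\mid N$ and $\phi(N)\le\binom{2g}{1}=2g$, such that $\psi^{ew}\equiv\psi_{F,\Phi}^{w}$ with $w=\lcm(N,c_g)/\gcd(e,c_g)$, subject to $e\cdot[F:\qq]\le 2g$ and $0\le a\le 2$. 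Using the Weil-pairing twist, i.e.\ replacing $\psi$ by $\cyc_\ell\otimes\psi^{-1}$ if necessary as in the proof of Theorem~\ref{mainthm}, I may assume $0\le a\le 1$; in particular $\dim\Phi\le\frac12\,e\,[F:\qq]\le g$.

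If $F=\qq$ then $\theta^S$ is a power of $\nm^K_\qq$, so $\psi_{F,\Phi}$ is a power of $\cyc_\ell$, and unwinding $\psi^{ew}\equiv\psi_{F,\Phi}^w$ shows that some fixed power of $\psi$ is a power of $\cyc_\ell$; combined with $0\le a\le 1$ this should pin $\psi^{c_g}$ down to $1$ or $\cyc_\ell^{c_g}$, which is conclusion~(1). If $F$ is a CM field then $K\supset K_{F,\Phi}$ by Theorem~\ref{thm:charclass}(2), and I would follow the pattern of the proof of Theorem~\ref{ssoverell}: pass to the unramified abelian extension $M/K$ over which the relevant obstruction dies, so that over $M$ the formal datum becomes a genuine primitive full CM type $\Phi'$ with reflex field $K_{F,\Phi'}\subset M$, $\dim\Phi'=\frac12[F:\qq]$, and $(\exponent M/K)$ bounded by $e$, whence $(\dim\Phi')\cdot(\exponent M/K)\le\frac12\,e\,[F:\qq]\le g$. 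Corollary~\ref{full} then produces a full-CM abelian variety $A'$ of type $(F,\Phi')$ defined over $K_{F,\Phi'}\subset M$, and Theorem~\ref{psifphi} identifies an $\ell$-adic weight-one associated character of $A'$ with $\psi_{F,\Phi'}$ modulo $\mu_F$; combining with $\psi^{ew}\equiv\psi_{F,\Phi}^w$ over $M$ and the same exponent-collapse as in the $F=\qq$ case gives $(\psi|_{\gal(\bar K/M)})^{c_g}=(\psi')^{c_g}$ for $\psi'$ the mod-$\ell$ reduction of that character of $A'$, which is conclusion~(2).

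The hard part will be the bookkeeping that replaces the raw exponent $w=\lcm(N,c_g)/\gcd(e,c_g)$, together with the ancillary integer $e$, by the clean exponent $c_g$. One must (i) recognise each root of unity that enters --- the element $X$ in the proof of Lemma~\ref{lmun} and the root of unity in $F$ relating $\psi_{F,\Phi'}$ to $\psi_{F,\Phi}$ --- as the reduction modulo $\ell$ of the order of an element of some $\spg_{2g}(\ff_p)$, so that $N\mid c_g$ by Definition~\ref{defcg} and hence the exponent simplifies; and (ii) build the extension $M$ in the CM case so that the remaining factor $e$ is genuinely absorbed into $(\exponent M/K)$ rather than surviving in the final exponent. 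The other ingredients --- the Weil-pairing twist, Corollary~\ref{full}, the character comparison of Theorem~\ref{psifphi}, and Corollary~\ref{psiunr} for ramification away from $\ell$ --- are formal consequences of what has already been proved.
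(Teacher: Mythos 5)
Your skeleton is essentially the paper's (its proof is one line: apply Theorem~\ref{ssoverell}, note that $F=\qq$ gives conclusion~(1), and that in the CM case $m=\tfrac12[F:\qq]$ so Corollary~\ref{full} applies), but there is a genuine gap in exactly the place you flag as your unresolved ``hard part (ii)'': the integer $e$. This corollary is the semistable-at-$\ell$ statement (that is how the introduction frames it, and it is why the paper proves it via Theorem~\ref{ssoverell} rather than Theorem~\ref{mainthm}); under semistability at the primes over $\ell$, Lemma~\ref{lmga} (via Lemma~\ref{localnormss}) gives $e=1$ from the outset, so with $d=1$ and $a\le 1$ the type $\Phi$ is already full, $m=\tfrac12 a [F:\qq]$. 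Your plan to keep $e$ general and ``absorb'' it into $(\exponent M/K)$ cannot work: $e$ records the ramification of $\psi$ at places over $\ell$ (Lemma~\ref{localnorm}), and base change to an unramified abelian extension $M$ does not alter $\psi|_{I_v}$ for $v\mid\ell$; moreover the reducedness of the pair $(S,e)$ (Definition~\ref{corresp}) says precisely that $\theta^S$ is not an $e$th power, so when $e>1$ there is no CM type $\Phi'$ with $\det_F\Phi=(\det_F\Phi')^e$, and $\Phi$ itself has $\dim_K\Phi=\tfrac12 e[F:\qq]>\tfrac12[F:\qq]$, hence is not full over $K$ or over any extension. Your claim that $\exponent M/K$ is bounded by $e$ also conflates two different integers: in Theorem~\ref{ssoverell} the extension $M=K\cdot K_{F,\Phi}$ is cut out by the kernel of $C\theta^S\colon\cla(K)\to\cla^W(F)$, so $\exponent M/K$ is the exponent $e'$ of the image of $C\theta^S$, and the inequality $(\dim A')\cdot(\exponent M/K)\le g$ comes from Lemma~\ref{lmcl}, namely $e'\cdot[F:\qq]\cdot e\le\binom{2g}{1}=2g$ with $e=1$, not from any relation between $\exponent M/K$ and $e$.

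By contrast, your ``hard part (i)'' is not hard: $\phi(N)\le\binom{2g}{1}=2g$, so Lemma~\ref{cyclotomicfield} gives $N\mid c_g$, and since $eN_0\mid N$ the $\mu_F$-ambiguity in Theorem~\ref{psifphi} also disappears after raising to the power $c_g$; with $e=1$ the exponent $w=\lcm(N,c_g)/\gcd(e,c_g)$ collapses to $c_g$ exactly. The remaining ingredients you list --- the Weil-pairing twist to reach $0\le a\le 1$ (noting that the twisted character is again the reduction of a \weight~$1$ $\ell$-adic associated character of the same $A'$), the case $F=\qq$ yielding conclusion~(1), and Corollary~\ref{full} together with Theorem~\ref{psifphi} in the CM case --- are exactly the paper's. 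So the fix is not further bookkeeping with $w$ and $M$, but invoking the input $e=1$ supplied by the semistability hypothesis through Theorem~\ref{ssoverell}.
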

\begin{proof}
Here, we use Theorem~\ref{ssoverell}.
The two cases correspond to $F = \qq$ and $F$ a CM-field.
In the second case,
we have $m = \frac{1}{2} \cdot [F : \qq]$,
so the result follows from Corollary~\ref{full}.
\end{proof}

The next two Corollaries list the possible
\weight~1 associated characters for $g \in \{2, 3\}$.
(Note that the easiest way of computing $c_g$ is to use
Theorem~\ref{thmcg} of Section~\ref{sec:effective} below.)

\begin{cor}\label{absurf}
Let $K$ be a number field.
Then there exists a
finite set $S_{K, 2}$ of prime numbers depending only on $K$
such that, for a prime $\ell \notin S_{K, 2}$,
and an abelian surface $A$
with a mod-$\ell$ associated character $\psi$
of {\weight} $1$,
one of the following holds.
\begin{enumerate}
\item There exists a full CM abelian surface $A'$
over $K$ whose CM is defined over $K$, with an
$\ell$-adic \weight~$1$ associated character
whose mod-$\ell$ reduction $\psi'$ satisfies
\[\psi^{120} = (\psi')^{120}.\]
\item
There exists an abelian unramified extension $L/K$ of exponent
at most $2$, a CM
elliptic curve $E'$ defined over $L$,
such that $K$ contains the CM field, and
an $\ell$-adic \weight~$1$ associated character of $E'$ whose mod-$\ell$
reduction $\psi'$ satisfies
\[\psi|_{\gal(\bar{K}/L)}^{120} = (\psi')^{120}.\]
\item For some $a \in \{0, 60, 120\}$, we have
\[\psi^{120} = \cyc_\ell^a.\]
\end{enumerate}
\end{cor}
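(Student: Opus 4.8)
The plan is to specialize Theorem~\ref{mainthm} to $g = 2$ and $d = 1$ and to observe that the numerical constraints it imposes are rigid enough to force precisely the three listed cases. I would begin by recording that $\binom{2g}{d} = \binom{4}{1} = 4$, that $c_2 = 120 = 2^3 \cdot 3 \cdot 5$ (the value supplied by Theorem~\ref{thmcg}), and that one may take $S_{K, 2}$ to be the set $S_{K, g}$ of Theorem~\ref{mainthm} with $g = 2$. Applying that theorem to the given mod-$\ell$ associated character produces a CM-type $(F, \Phi)$ with $K \supset K_{F, \Phi}$, a primitive balanced $\Phi$ of dimension $m = \frac{1}{2} a e [F : \qq]$, and integers $a \le d = 1$, $e, N > 0$ with $e[F : \qq] \le 4$, $\phi(N) \le 4$, and a congruence $\psi^{e w} \equiv \psi_{F, \Phi}^{w} \pmod \ell$ with $w = \lcm(N, c_2)/\gcd(e, c_2)$, where $\psi$ is either the given character or its Weil-pairing complement $\cyc_\ell \otimes \psi^{-1}$. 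This last ambiguity is harmless: $\cyc_\ell \otimes (\psi')^{-1}$ is again the mod-$\ell$ reduction of an $\ell$-adic associated character of the same abelian variety, and $\cyc_\ell^{120} \otimes (\cyc_\ell^{a})^{-1} = \cyc_\ell^{120 - a}$ permutes $\{0, 60, 120\}$, so each of the three conclusions is symmetric under it. Since $\phi(N) \le 4$ forces $N \mid 120$ and $e \le 4$ forces $e \mid 120$, we get $w = 120/e$, hence $\psi^{120} \equiv \psi_{F, \Phi}^{120/e} \pmod \ell$.

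Next I would split on the field $F$. If $F = \qq$ --- which in particular covers the degenerate possibility $a = 0$, $\Phi = 0$ --- then $\psi_{F, \Phi}$ is a power of the cyclotomic character exactly as in the proof of Corollary~\ref{cor:nocmfield}, and $\psi^{120}$ comes out equal to $\cyc_\ell^{a}$ for some $a \in \{0, 60, 120\}$, which is case~(3). If $F$ is imaginary quadratic, then $[F : \qq] = 2$, so $e \in \{1, 2\}$ and $m \in \{1, 2\}$; primitivity of $\Phi$ forces $\Phi$ to be a one-dimensional CM type $\Phi_0$ (if $e = 1$) or $\Phi_0$ taken with multiplicity two (if $e = 2$), so that $\theta_{\Phi} = \theta_{\Phi_0}^{e}$ and $\psi_{F, \Phi} \equiv \psi_{F, \Phi_0}^{e}$ modulo $\mu_F$ on their common domain. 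Let $E'$ be the CM elliptic curve of type $(F, \Phi_0)$, which by Corollary~\ref{full} is defined, together with its CM, over $K_{F, \Phi_0}$, and set $L = K \cdot K_{F, \Phi_0}$; then $F \subset K$ because $\Phi_0$ realizes $F$ inside $\mend_K(K) = K$, and the relation $K \supset K_{F, \Phi}$ translates into $e \cdot C\theta_{\Phi_0}$ vanishing on $\cla(K)$, so $L/K$ is abelian, unramified, and of exponent dividing $e \le 2$. Restricting $\psi^{120} \equiv \psi_{F, \Phi}^{120/e}$ to $\gal(\bar{K}/L)$ and using $\psi_{F, \Phi_0}|_{\gal(\bar{K}/L)} \equiv \psi_{E', \sigma}$ modulo $\mu_F$ (Theorem~\ref{psifphi}, applicable since $L \supset K_{F, \Phi_0}$) together with $|\mu_F| \mid 120$ gives $(\psi|_{\gal(\bar{K}/L)})^{120} = (\psi')^{120}$ with $\psi'$ the mod-$\ell$ reduction of $\psi_{E', \sigma}$, which is case~(2). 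Finally, if $F$ is a quartic CM field, then $e[F : \qq] \le 4$ forces $e = 1$ and $m = 2$, so $\Phi$ is a primitive full CM type; Corollary~\ref{full} then produces a full-CM abelian surface $A'$ of type $(F, \Phi)$ defined over $K_{F, \Phi} \subset K$, and the congruence reads $\psi^{120} \equiv \psi_{F, \Phi}^{120}$, which (up to $\mu_F^{120} = 1$) is the $120$th power of the mod-$\ell$ reduction of an $\ell$-adic \weight-$1$ associated character of $A'$; this is case~(1).

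The step I expect to be the main obstacle is the imaginary quadratic case: one must verify carefully that primitivity of $\Phi$ really does pin $e$ down to $\{1, 2\}$ and determine $\Phi$ up to the shape $\Phi_0$ or $\Phi_0^{\oplus 2}$, descend from the possibly-non-simple $(F, \Phi)$-variety to an honest CM elliptic curve $E'$ while identifying $\psi_{F, \Phi}$ with $\psi_{F, \Phi_0}^{e}$, and then recognize the discrepancy between $K$ and $K_{F, \Phi_0}$ as exactly the unramified abelian extension $L/K$ of exponent at most $2$, bounding its exponent by $e$ via the factorization $\theta_{\Phi} = \theta_{\Phi_0}^{e}$ and the inclusion $K \supset K_{F, \Phi}$. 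The cases $F = \qq$ and $[F : \qq] = 4$ are then routine specializations. In each case it is the passage to the $120$th power that simultaneously absorbs the $\mu_F$-ambiguity, the $\gcd(e, c_2)$-factor from Theorem~\ref{mainthm}, and the root-of-unity factor of order $N$, since $120 = c_2$ is divisible by $c_2$, by every $e \le 4$, by every $N$ with $\phi(N) \le 4$, and by $|\mu_F| \in \{2, 4, 6\}$.
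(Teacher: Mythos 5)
Your derivation is correct and is essentially the intended one: the paper gives no separate argument for this corollary, which is meant to be exactly the specialization of Theorem~\ref{mainthm} to $g=2$, $d=1$, using $c_2=120$ from Theorem~\ref{thmcg} and $\binom{4}{1}=4$, with the three cases corresponding to $[F:\qq]=4$, $[F:\qq]=2$, and $F=\qq$, and with Corollary~\ref{full} and Theorem~\ref{psifphi} supplying the CM abelian surface, respectively the CM elliptic curve, together with the comparison of characters; your treatment of the $\psi_0$ versus $\cyc_\ell\otimes\psi_0^{-1}$ ambiguity via the Weil pairing is also the right one. The one place where your justification is not quite the right mechanism is the step you yourself flag: the shape of $\Phi$ in the imaginary quadratic case does not follow from ``primitivity'' (which the paper never defines), but it does follow from a short orbit argument. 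Since the image of $\theta^S=\det_F\Phi$ generates $F$, the exponent vector $S\in\zz[\emb{K}]$ is $G_F$-invariant (Proposition~\ref{torusmaps}); if $F$ were quadratic but not contained in $K$, then $G_F$ would act transitively on $\emb{K}$, forcing $S$ constant and hence $F=\qq$, a contradiction — so $F\subset K$ automatically, and then $G_F$ has exactly two orbits on $\emb{K}$, swapped by every complex conjugation. Thus $S$ takes two values $s_1,s_2$ with $s_1+s_2=ae=e$ by Lemma~\ref{lmcc}, and $s_1\neq s_2$ (else $F=\qq$), so for $e=2$ one gets $\{s_1,s_2\}=\{0,2\}$, i.e.\ $S=2S_0$ with $S_0$ a $G_F$-invariant one-dimensional CM type; this yields $\Phi\simeq\Phi_0^{\oplus 2}$, $\theta^S=(\theta^{S_0})^2$, and the rest of your argument (the exponent-$\le 2$ unramified extension $L=K\cdot K_{F,\Phi_0}$ from triviality of $2\,C\theta^{S_0}$ on $\cla(K)$, and absorption of $\mu_F$ and all root-of-unity factors into the $120$th power) goes through as you wrote it.
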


\begin{cor}\label{abthree}
Let $K$ be a number field.
Then there exists a
finite set $S_{K, 3}$ of prime numbers depending only on $K$
such that, for a prime $\ell \notin S_{K, 3}$,
and an abelian threefold $A$
with a mod-$\ell$ associated character $\psi$
of {\weight} $1$,
one of the following holds.
\begin{enumerate}
\item
There exists a full CM abelian surface or threefold $A'$
over $K$ whose CM is defined over $K$, with an
$\ell$-adic \weight~$1$ associated character
whose mod-$\ell$ reduction $\psi'$ satisfies
\[\psi^{2520} = (\psi')^{2520}.\]
\item
There exists an abelian unramified extension $L/K$ of exponent
at most $3$, a CM
elliptic curve $E'$ defined over $L$,
such that $K$ contains the CM field, and
an $\ell$-adic \weight~$1$ associated character of $E'$ whose mod-$\ell$
reduction $\psi'$ satisfies
\[\psi|_{\gal(\bar{K}/L)}^{2520} = (\psi')^{2520}.\]
\item There exists a CM elliptic curve $E'$ over $K$,
such that $K$ contains the CM field, and
an $\ell$-adic \weight~$1$ associated character of $E'$ whose mod-$\ell$
reduction $\psi'$ satisfies
\[\psi^{2520} = (\psi' \otimes \cyc_\ell)^{840}.\]
\item For some $a \in \{0, 1260, 2520\}$, we have
\[\psi^{2520} = \cyc_\ell^a.\]
\end{enumerate}
\end{cor}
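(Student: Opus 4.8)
The plan is to specialize Theorem~\ref{mainthm} to $g = 3$ and $d = 1$ and then unwind the finitely many possibilities it produces. By Theorem~\ref{thmcg} one has $c_3 = 2520 = 2^3\cdot 3^2\cdot 5\cdot 7$, and $\binom{2g}{d} = \binom{6}{1} = 6$. I would run the argument through Theorem~\ref{thm:charclass} and Lemmas~\ref{lmcl}, \ref{lmun}, keeping the exponent $e'$ of $C\theta^S\colon\cla(K)\to\cla^W(F)$ separate rather than absorbing it into $e$ as in Theorem~\ref{mainthm}: this yields a CM field or $\qq$ named $F$, a primitive balanced $\Phi$, and integers $a,e,N$ with $0\le a\le 1$, $m=\frac{1}{2}\cdot a\cdot e\cdot[F:\qq]$, $\phi(N)\le 6$ and $e\cdot e'\cdot[F:\qq]\le 6$, such that after passing to the unramified abelian extension $M = K\cdot K_{F,\Phi}$ of exponent $e'$ we have $\psi^{ew}\equiv\psi_{F,\Phi}^{w}\pmod\ell$ on $G_M$, with $w = \lcm(N,c_3)/\gcd(e,c_3)$ and $\psi\in\{\psi_0,\ \cyc_\ell\otimes\psi_0^{-1}\}$.

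First I would collapse the exponents. An elementary check shows every positive integer with Euler $\phi$-value at most $6$ divides $\lcm(7,8,9,5)=2520$; hence $N\mid 2520$, so $\lcm(N,c_3)=c_3$, and $e\le 6$ gives $e\mid 2520$, so $\gcd(e,c_3)=e$ and $w=2520/e$. Thus $ew=2520$ and the conclusion reads $\psi^{2520}\equiv\psi_{F,\Phi}^{2520/e}\pmod\ell$ on $G_M$. The same check gives $N_0:=\#\mu_F\mid N\mid 2520$, so raising to the $2520$th power erases the $\mu_F$-ambiguity in $\psi_{F,\Phi}$; it therefore suffices, in each case, to realize $\psi_{F,\Phi}$ up to $2520$th powers by an $\ell$-adic \weight~$1$ associated character of an explicit CM abelian variety.

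Next I would run through the cases. If $a=0$ then $\psi_{F,\Phi}$ is trivial; if $F=\qq$, Corollary~\ref{cor:nocmfield} (with $d=1$, hence $0\le a\le 2$) gives $\psi^{2520}=\cyc_\ell^{a}$ with $a\in\{0,1260,2520\}$: both fall into case~(4). So assume $F$ is a CM field and $a=1$, so $[F:\qq]\in\{2,4,6\}$ and $e\cdot e'\cdot[F:\qq]\le 6$ forces $e\le 3$. When $e=1$ the relevant variety has full CM by $F$ with $m=[F:\qq]/2\in\{1,2,3\}$, and Corollary~\ref{full} together with Theorem~\ref{psifphi} produces an $(F,\Phi)$-abelian variety $A'$ of dimension $m$ over $K_{F,\Phi}$ whose $\ell$-adic \weight~$1$ character reduces to $\psi$ (up to $2520$th powers) on $G_M$. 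If $[F:\qq]\in\{4,6\}$ then $e'=1$, so $M=K$ and $A'$ --- a full CM surface or threefold --- is defined over $K$ with CM defined over $K$ (as $K\supset K_{F,\Phi}\supset$ the reflex field): this is case~(1). If $[F:\qq]=2$, then $\det_F\Phi=\nm^K_F$ so $F\subset K$, $K_{F,\Phi}$ is the Hilbert class field of $F$, and $E'=A'$ is a CM elliptic curve over $M=L$ with $\exponent L/K=e'\le 3$: this is case~(2). The only other surviving configuration is $e=3$, $[F:\qq]=2$ (whence $e'=1$, $M=K$, $m=3$), where the variety is not full CM: decomposing the primitive bimodule $\Phi$ exhibits it as a full CM type $\Phi_1$ of $F$ together with one copy of the norm character, so $\psi_{F,\Phi}=\psi_{F,\Phi_1}\cdot\cyc_\ell$ and $K_{F,\Phi}=K_{F,\Phi_1}\subset K$; then $\psi^{2520}=\psi_{F,\Phi}^{840}=(\psi'\otimes\cyc_\ell)^{840}$ with $\psi'$ the reduction of the \weight~$1$ character of a CM elliptic curve $E'$ over $K$, which is case~(3).

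The hard part will be the bookkeeping just sketched: enumerating the finitely many reduced pairs $(S,e)$ and the fields $F$ they cut out, showing that the configuration $e=2$ --- which would force $[F:\qq]=2$ --- cannot occur, since a primitive $2$-dimensional bimodule $\Phi$ over a quadratic CM field $F$ has $\det_F\Phi$ valued in $\qq$, contradicting minimality of its field of definition; and then, in each surviving case, pinning down exactly which CM abelian variety $\psi_{F,\Phi}$ reduces from, over which field it and its CM are defined, and why $K$ contains the relevant CM field in cases~(2) and~(3), including the bound $\exponent L/K\le 3$ via Lemma~\ref{lmcl}. Once $c_3=2520$ is fixed, the remaining exponent arithmetic ($N\mid 2520$, $e\mid 2520$, $N_0\mid 2520$, $2520/3=840$) is routine.
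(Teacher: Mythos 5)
Your proposal is correct and is essentially the paper's (implicit) proof: the paper states Corollary~\ref{abthree} as a direct specialization of Theorem~\ref{mainthm} together with the Theorem~\ref{ssoverell}-style bookkeeping of the class-group exponent $e'$, using $c_3=2520$ from Theorem~\ref{thmcg} and $\binom{6}{1}=6$, exactly as you do. Your enumeration of the configurations $(a,e,e',[F:\qq])$, the exclusion of $e=2$, and the identification of $e=3$ with $\det_F\Phi=\nm^K_F\cdot\nm^K_\qq$ (hence case~(3)) is the intended unwinding.
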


\section{\label{sec:curves} The Special Case of Elliptic Curves}

In this section, we assume in addition that $g = 1$,
i.e.\ that $A = E$ is an elliptic curve. In this
case, we can prove a slightly stronger theorem.

\subsection{\boldmath Theorem~\ref{mainthm} in the Case $g = 1$}

\begin{lm} \label{mainthmforcurves}
Let $K$ be a number field.
Then, there exists a
finite set $S_K$ of prime numbers depending only on $K$
such that, for a prime $\ell \notin S_K$,
and an elliptic curve $E$ over $K$ for which $E[\ell]\otimes\bar{\ff}_\ell$ 
is reducible with {\weight} $1$ associated character $\psi$,
one of the following holds.
\begin{enumerate}
\item \label{eccyc} We have $\psi^{12} \in \{1, \cyc_\ell^6, \cyc_\ell^{12}\}$.
\item \label{eccm} There exists a CM
elliptic curve $E'$, which is defined over $K$
and whose CM-field is contained in $K$, with
an $\ell$-adic {\weight} $1$ associated character
whose mod-$\ell$ reduction $\psi'$ satisfies:
\[\psi^{12} = (\psi')^{12}\]
\end{enumerate}
\end{lm}

\begin{proof}
If $F = \qq$ in Theorem~\ref{mainthm},
Case~\ref{eccyc} holds.
Otherwise, $F$ is imaginary quadratic, and
we can take $E' = \cc / \oo_F$,
since $\psi_{F, \Phi} = \psi'$ and $K_{F, \Phi}$ is the
Hilbert class field of $F$.
\end{proof}

\subsection{Effective Chebotarev Theorem \label{sec:cheb}}

Under GRH, we have the following effective version
of the Chebotarev Density Theorem, due to
Lagarias and Odlyzko, with improvements due to Bach.

\begin{cthm}[Effective Chebotarev]\label{thm:chebotarev}
Let $E/K$ be a Galois extension of number fields
with $E \neq \qq$.
Then under GRH, there exists an effectively computable absolute constant $\cceb$
such that every conjugacy class of $\gal(E/K)$ is represented
by a Frobenius element
of a prime ideal $v \in \Sigma_K$ which is unramified in $E$,
such that
\[\nm_{K/\qq}(v) \leq \cceb (\log \Delta_E)^2.\]
Moreover, we can take $v$ to be a prime of degree~1 and unramified in $K/\qq$.
\end{cthm}
\begin{proof}
See \cite{lo}, remark at end of paper
regarding the improvement to Corollary~1.2.
That we can take $\mathfrak{p}$ to be of degree~1
and unramified in $K/\qq$
follows from Theorem~3.1 
in~\cite{bach}.
\end{proof}

\begin{rem} \label{remuncond}
Unconditionally, a similar theorem is true,
with the bound of $\cceb (\log \Delta_E)^2$
replaced by $\exp(\cuncond\Delta_E)$, for an effectively
computable absolute constant $\cuncond$.
\end{rem}

In fact, we will need a very slight strengthening
of the above Theorem, to avoid possible issues
at the prime $3$.

\begin{cor}\label{cor:avo}
Let $E/K$ be a Galois extension of number fields
with $E \neq \qq$, and let $N$ be a positive (rational) integer.
Then under GRH, there exists an effectively computable
absolute constant $\ccebavo$
such that every conjugacy class of $\gal(E/K)$ is represented
by a Frobenius element
of a prime ideal $v \in \Sigma_K$ which is unramified in $E$,
such that
\[\nm_{K/\qq}(v) \leq \ccebavo \cdot (\log \Delta_E + n_E \log N)^2.\]
Moreover, we can take $v$ to be of degree~1, unramified in $K/\qq$,
and not divide $N$.
\end{cor}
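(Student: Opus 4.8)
The plan is to deduce Corollary~\ref{cor:avo} directly from Theorem~\ref{thm:chebotarev} by replacing $E$ with a mild enlargement in which every prime dividing $N$ is forced to be \emph{ramified}, while keeping $\log|\Delta|$ of comparable size; then a prime handed to us by the effective Chebotarev theorem, being unramified, cannot divide $N$. The one thing that has to be done carefully is the choice of auxiliary extension: one must resist adjoining $\zeta_N$, whose discriminant grows polynomially in $N$ and would wreck the target bound $\ccebavo(\log\Delta_E + n_E\log N)^2$. Since we only need to control ramification (not splitting) at primes dividing $N$, a single quadratic field $\qq(\sqrt M)$ suffices, and its discriminant is only linear in $N$.

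Concretely, first I would choose a squarefree integer $M$ divisible by every prime dividing $N$, and also by $2$ whenever $N$ is even: say $M=\prod_{p\mid N}p$ if $N$ is odd, $M=2\prod_{p\mid N,\ p\text{ odd}}p$ if $N$ is even, and $M=1$ if $N=1$. Then $|M|\le N$, so $|\Delta_{\qq(\sqrt M)}|\le 4|M|\le 4N$, and the classical description of ramification in quadratic fields shows every prime dividing $N$ ramifies in $\qq(\sqrt M)/\qq$. Set $L=E(\sqrt M)$; since $E/K$ and $\qq(\sqrt M)/\qq$ are Galois, $L/K$ is Galois, $E\subseteq L$, $[L:E]\le 2$, and $L\ne\qq$. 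The standard estimate for discriminants of composita, $|\Delta_{EF}|\le |\Delta_E|^{[EF:E]}|\Delta_F|^{[EF:F]}$, applied with $F=\qq(\sqrt M)$ gives
\[ |\Delta_L|\le |\Delta_E|^{2}\cdot|\Delta_{\qq(\sqrt M)}|^{n_E}\le |\Delta_E|^{2}\cdot(4N)^{n_E}, \]
so, using the Minkowski bound $n_E\ll\log|\Delta_E|$ (valid as $E\ne\qq$), one gets $\log|\Delta_L|\le c_0(\log|\Delta_E|+n_E\log N)$ for an absolute constant $c_0$. Now, given a conjugacy class $C\subset\gal(E/K)$, pick any conjugacy class $\tilde C\subset\gal(L/K)$ mapping onto $C$ under the surjection $\gal(L/K)\twoheadrightarrow\gal(E/K)$, and apply Theorem~\ref{thm:chebotarev} to $(L/K,\tilde C)$: this yields a prime $v\in\Sigma_K$ of degree~$1$, unramified in $K/\qq$ and in $L$ (hence in $E$), with $\mathrm{Frob}_v|_E\in C$ and $\nm_{K/\qq}(v)\le\cceb(\log|\Delta_L|)^2\le \cceb c_0^2(\log|\Delta_E|+n_E\log N)^2$, so $\ccebavo:=\cceb c_0^2$ works.

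It remains to check $v\nmid N$: if $v$ lay over a prime $p\mid N$, then $p$ ramifies in $\qq(\sqrt M)\subseteq L$, so any prime of $L$ over $v$ has ramification index at least $2$ over $p$; since $v$ is unramified in $K/\qq$, this ramification must occur in $L/K$, contradicting that $v$ is unramified in $L$. The unconditional variant (cf.\ Remark~\ref{remuncond}) follows by the identical argument, using the unconditional form of Theorem~\ref{thm:chebotarev} for $L$. The only real obstacle here is the discriminant bookkeeping of the middle paragraph — in particular convincing oneself that a \emph{quadratic} twist, not a cyclotomic one, is already enough to pin down the behavior at all primes dividing $N$; everything else is a routine invocation of the cited effective Chebotarev theorem and of elementary ramification theory in quadratic fields.
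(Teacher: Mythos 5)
Your proof is correct and follows essentially the same route as the paper's: the paper likewise reduces to $N$ squarefree, adjoins $\sqrt{N}$ to $E$ so that every prime dividing $N$ ramifies, applies Theorem~\ref{thm:chebotarev} to the enlarged extension, and bounds its discriminant (citing Proposition~5 of \cite{quelques} where you use the compositum discriminant bound plus Minkowski). The only differences are cosmetic --- your $\sqrt{M}$ with $M$ the $2$-adjusted radical of $N$ versus the paper's $\sqrt{N}$, and the choice of discriminant estimate.
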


\begin{proof}
This is proven in \cite{quelques}
for $K = \qq$. Thanks to Bach's improvement to the effective
Chebotarev theorem (that we can take $v$ to be unramified in $K/\qq$),
the same argument works for arbitrary number fields.
For completeness, we recall the proof below.

Clearly, we can assume that $N$ is squarefree.
Define $E' = E[\sqrt{N}]$.
Then every prime of $E'$ lying over a prime divisor of $N$
is ramified in $E'/\qq$.

Now, we apply effective Chebotarev to $\gal(E' / K)$,
to conclude that every conjugacy class of $\gal(E'/K)$
is represented by a Frobenius element of a
prime ideal $v \in \Sigma_K$ of degree~$1$ which is unramified
in $E'$ and in $K/\qq$, and thus is coprime to $N$,
such that $\nm^K_\qq(v) \leq \cceb (\log \Delta_{E'})^2$.
Since $E'/E$ is ramified only at primes dividing $2N$,
we can bound $\Delta_{E'}$ using Proposition~5 of
section~1.3 of \cite{quelques},
and thereby conclude we can take $v$ so that
\begin{align*}
\nm^K_\qq(v) &\leq \cceb (\log \Delta_{E'})^2 \\
&\leq \cceb \cdot \left(2 \log \Delta_E + n_E \log 2N + n_E \log 2\right)^2 \\
&\leq \ccebavo \cdot (\log \Delta_E + n_E \log N)^2
\end{align*}
for some effectively computable absolute constant $\ccebavo$.
\end{proof}

\subsection{Proof of Theorem~\ref{ellcurve}}

\begin{thm} \label{ellcurve}
Let $K$ be a number field.
Then, there exists a
finite set $S_K$ of prime numbers depending only on $K$
such that, for a prime $\ell \notin S_K$,
and an elliptic curve $E$ over $K$ for which $E[\ell]\otimes\bar{\ff}_\ell$ 
is reducible with {\weight} $1$ associated character $\psi$,
one of the following holds.
\begin{enumerate}
\item\label{ellcurve1} There exists a CM
elliptic curve $E'$, which is defined over $K$
and whose CM-field is contained in $K$, with
an $\ell$-adic {\weight} $1$ associated character
whose mod-$\ell$ reduction $\psi'$ satisfies:
\begin{equation*}
\psi^{12} = (\psi')^{12}
\end{equation*}
\item \label{ellcurve2}
The Generalized Riemann Hypothesis fails for $K[\sqrt{-\ell}]$, and 
\begin{equation*}
\psi^{12} = \cyc_{\ell}^6,
\end{equation*}
where $\cyc_\ell$ is the cyclotomic character.
(Moreover, in this case we must have $\ell \equiv 3$ mod $4$ and
the representation $\rho_{E, \ell}$ is already reducible over $\ff_\ell$.)
\end{enumerate}
\end{thm}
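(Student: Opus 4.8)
The plan is to derive Theorem~\ref{ellcurve} from Lemma~\ref{mainthmforcurves} (which is Theorem~\ref{mainthm} specialized to $g=1$) by eliminating the ``purely cyclotomic'' possibilities and analyzing the remaining ``half--cyclotomic'' one. If Case~\ref{eccm} of Lemma~\ref{mainthmforcurves} holds we are immediately in Case~\ref{ellcurve1}, so we may assume we are in Case~\ref{eccyc}, i.e.\ $\psi^{12}\in\{1,\cyc_\ell^6,\cyc_\ell^{12}\}$. First I would dispose of $\psi^{12}=1$ and $\psi^{12}=\cyc_\ell^{12}$: if $\psi^{12}=1$ then $\psi$ has order dividing $12$, so over an extension $K'/K$ of degree at most $12\,n_K$ the isogeny character becomes trivial and $E_{K'}$ acquires a $K'$-rational point of order $\ell$; by the (effective) uniform boundedness of torsion \cite{merel}, this forces $\ell$ into a finite set depending only on $n_K$. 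Since $\cyc_\ell\otimes\psi^{-1}$ is also a \weight~$1$ associated character of $E$ by Galois-invariance of the Weil pairing, the case $\psi^{12}=\cyc_\ell^{12}$ reduces to the previous one. Enlarging $S_K$, we may henceforth assume $\psi^{12}=\cyc_\ell^6$; note this is the case $F=\qq$, $a=1$ of Theorem~\ref{mainthm}, which forces $e=2$.

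The heart of the argument concerns this case. For a prime $v\nmid\ell$ of good reduction, let $\alpha_v,\bar\alpha_v$ be the roots of the Frobenius polynomial (Lemma~\ref{frobpoly}): conjugate Weil numbers with $\alpha_v\bar\alpha_v=\nm(v)$ and $|\alpha_v+\bar\alpha_v|\le 2\sqrt{\nm(v)}$, and $\psi(\pi_v)\equiv\psi_\cc(v)\pmod{\mathfrak l}$ with $\psi_\cc(v)$ one of them. From $\psi^{12}=\cyc_\ell^6$ we get $\psi_\cc(v)^{12}\equiv\nm(v)^6=(\alpha_v\bar\alpha_v)^6\pmod{\mathfrak l}$, so the reduction of $\alpha_v/\bar\alpha_v$ is a sixth root of unity in $\bar{\ff}_\ell$. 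A short computation using $\alpha_v\bar\alpha_v=\nm(v)$ and $\alpha_v+\bar\alpha_v\in\zz$ shows that if in addition $\nm(v)<\ell/4$ and $\nm(v)$ is not a power of $3$, then $\alpha_v+\bar\alpha_v=0$, i.e.\ $E$ is supersingular at $v$. Thus $E$ is supersingular at all but the finitely many bad primes $v$ with $\nm(v)<\ell/4$ not lying over $3$. Using Corollary~\ref{cor:avo} (under GRH for $K[\sqrt{-\ell}]$, whose discriminant is polynomial in $\ell$ and $\Delta_K$), applied to an auxiliary field cutting out, say, the mod-$3$ representation of $E$ --- with ramification, hence the bad primes of $E$, controlled by N\'eron--Ogg--Shafarevich --- one produces (unless $E$ has complex multiplication) a prime $v$ with $\nm(v)<\ell/4$ at which $E$ is ordinary; in that case the bound on $\nm(v)$ forces $\ell$ into a finite set depending only on $K$, so $\ell\in S_K$.

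So we may assume $E$ has CM, necessarily by an imaginary quadratic field $F'$. The equality $e=2$ in Theorem~\ref{mainthm} means $E$ does not have semistable reduction at the primes over $\ell$ but acquires it over a ramified quadratic extension; combining this with Raynaud's description of $\psi|_{I_\ell}$ (Lemma~\ref{localnormss} and \cite{raynaud}), and with the fact --- already used above --- that $E$ is supersingular at many small primes (which bounds $|\bar\rho_{E,\ell}(G_K)|$), one sees that the mod-$\ell$ image lies in a split Cartan, hence $\rho_{E,\ell}$ is reducible already over $\ff_\ell$; that the relevant quadratic field is $K[\sqrt{-\ell}]$; and that $\ell\equiv 3\pmod 4$, since otherwise this quadratic field would be real, contradicting that the CM field of $E$ is imaginary, so the case does not occur and $\ell\in S_K$. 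If $F'\subset K$, then the CM of $E$ is defined over $K$ and the two $\ell$-adic \weight~$1$ associated characters of $E$ (cf.\ Theorem~\ref{psifphi}, with $B=E$; their product is $\cyc_\ell$) reduce modulo $\ell$ to $\psi$ and $\cyc_\ell\otimes\psi^{-1}$, so Case~\ref{ellcurve1} holds with $E'=E$. If $F'\not\subset K$, then $F'\cdot K=K[\sqrt{-\ell}]$, which forces $-\ell$ to differ from $\Delta_{F'}$ by a square in $K^\times$ --- possible for only finitely many primes $\ell$ since $F'$ depends only on $E$ and $K$ has only finitely many quadratic subfields of $\bar{\qq}$ --- so again $\ell\in S_K$. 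Putting this together: outside a finite set $S_K$ depending only on $K$, either Case~\ref{ellcurve1} holds, or GRH fails for $K[\sqrt{-\ell}]$ and we are in Case~\ref{ellcurve2}. All inputs (\cite{merel}, Corollary~\ref{cor:avo}, Remark~\ref{remuncond}) are effective, so $S_K$ is effectively computable, yielding the bound of Theorem~\ref{effectivethm}.

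I expect the main obstacle to be the combination of the second and third paragraphs: extracting from the numerical identity $\psi^{12}=\cyc_\ell^6$, together with the Weil bounds, the strong conclusion that $E$ is supersingular at essentially all small primes, and then running the complex-multiplication dichotomy with a bound depending \emph{only} on $K$ rather than on $E$ or its conductor. This is precisely where the effective Chebotarev theorem (Corollary~\ref{cor:avo}), and hence the hypothesis of GRH for $K[\sqrt{-\ell}]$, is indispensable, and where the auxiliary fields must be chosen carefully so that their discriminants remain controlled. The local analysis at $\ell$ that pins down $K[\sqrt{-\ell}]$, the congruence $\ell\equiv 3\pmod 4$, and reducibility already over $\ff_\ell$ is technical but follows the pattern of Momose's work \cite{momose} combined with the results of Section~\ref{sec:local}.
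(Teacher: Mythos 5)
Your opening moves match the paper: reduce to Lemma~\ref{mainthmforcurves}, note that Case~\ref{eccm} gives Case~\ref{ellcurve1}, and dispose of $\psi^{12}\in\{1,\cyc_\ell^{12}\}$ by the Weil-pairing symmetry plus Merel applied over the degree-$\leq 12$ extension trivializing $\psi$. The gap is in your handling of $\psi^{12}=\cyc_\ell^6$. You propose to show $E$ is supersingular at all small primes away from $3$ and then run a CM/non-CM dichotomy by applying Corollary~\ref{cor:avo} to a field cutting out the mod-$3$ representation of $E$ to produce an ordinary prime of norm $<\ell/4$. This fails for two reasons. First, that auxiliary field depends on $E$: its discriminant involves the conductor of $E$, so effective Chebotarev (and the GRH hypothesis it requires, which would then be GRH for a field attached to $E$, not for $K[\sqrt{-\ell}]$ as the statement of Case~\ref{ellcurve2} demands) yields a bound on $\nm(v)$, hence on $\ell$, depending on $E$ rather than only on $K$ --- destroying exactly the uniformity the theorem asserts. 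Second, even granting GRH for every field in sight, there is no result guaranteeing that a non-CM elliptic curve has an ordinary prime of norm bounded in terms of $K$ alone; ``supersingular at all primes of norm $<\ell/4$ but not CM'' is not a configuration you can rule out by Chebotarev on any field independent of $E$. The remaining claims of your third paragraph (that the analysis pins down $K[\sqrt{-\ell}]$, forces $\ell\equiv 3\bmod 4$, and gives reducibility over $\ff_\ell$) are asserted via ``one sees'' and inherit the same unsupported CM dichotomy.

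The paper's actual argument in this case needs no CM dichotomy and no field depending on $E$. From $\psi_1^{12}=\psi_2^{12}=\cyc_\ell^6$ one first gets reducibility of $\rho_{E,\ell}$ over $\ff_\ell$ from Lemma~$18'$ of \cite{quelques} (irreducibility would force the projective image in $\pgl_2(\ff_\ell)$ to have order at most $6$, whereas it contains an element of order at least $(\ell-1)/4$), and then $\ell\equiv 3\bmod 4$ because $\cyc_\ell^6$ is a $12$th power of an $\ff_\ell^\times$-valued character while $\cyc_\ell$ surjects onto $\ff_\ell^\times$. Assuming GRH for $K[\sqrt{-\ell}]$ --- a field depending only on $K$ and $\ell$ --- Corollary~\ref{cor:avo} with $N=3$ produces a degree-$1$ prime $v$, split in $K[\sqrt{-\ell}]$, not over $3$, of norm $O\big((\log\Delta_K+n_K\log\ell)^2\big)$. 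The congruence $\psi_1(\pi_v)+\psi_2(\pi_v)\equiv \sqrt{\nm^K_\qq(v)}\,(\zeta+\bar\zeta)\bmod \mathfrak{l}$ with $\zeta^{12}=1$, together with the fact that the right side cannot be a nonzero rational (as $v\nmid 3$) and a norm estimate, forces $\psi_1(\pi_v)+\psi_2(\pi_v)=0$ once $\ell$ exceeds a constant depending only on $K$; then $(\psi_1(\pi_v)-\psi_2(\pi_v))^2=-4\nm^K_\qq(v)$ is a quadratic nonresidue mod $\ell$ (by $\ell\equiv 3\bmod 4$ and the splitting of $v$ in $K[\sqrt{-\ell}]$), contradicting that both eigenvalues lie in $\ff_\ell$. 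This is the mechanism your second and third paragraphs would need to be replaced by.
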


\begin{proof}
Since $\rho_{E, \ell}$ is reducible and $2$-dimensional,
its semisimplification is the direct sum of two associated characters
$\psi_1$ and $\psi_2$.
If $\psi_1^{12} \notin \{1, \cyc_\ell^6, \cyc_\ell^{12}\}$, then
by Lemma~\ref{mainthmforcurves}, Case~\ref{ellcurve1} holds.
Hence, it remains to show that Case~\ref{ellcurve2} holds
when $\psi_1^{12} \in \{1, \cyc_\ell^6, \cyc_\ell^{12}\}$.

\paragraph{\boldmath Case 1: $\psi_1^{12} \in \{1, \cyc_\ell^{12}\}$.}
If $\psi_1^{12} = \cyc_\ell^{12}$, then the Weil pairing gives
\[\psi_2^{12} = (\cyc_\ell \otimes \psi_1^{-1})^{12} = \cyc_\ell^{12} \otimes (\psi_1^{12})^{-1} = 1.\]
Thus, by interchanging indices if necessary,
we can assume without loss of generality that $\psi_1^{12}$ is trivial.

Then, $\psi_1$ defines a degree (at most) $12$ extension $M$ of $K$.
By construction, the Galois group $\gal(K^\text{ab}/M)$
is killed by $\psi_1$,
so when we consider $E$ as a curve over $M$,
the character $\psi_1$ is trivial.
Thus, we have a Galois-invariant subspace $V\subset E[\ell]$
such that either $V$ is pointwise fixed by
$G_M=\gal(\bar{K}/M)$, or the quotient $E[\ell] / V$
is pointwise fixed by $G_M$.
In the first case, $E$ has an $\ell$-torsion
point defined over $M$, and in the second case,
the isogenous curve $E / V$ has an $\ell$-torsion point defined
over $M$. Writing $n_M \leq 12 n_K$ for the degree of $M$,
Merel's Theorem \cite{merel} implies that
\[\ell \leq \left(\sqrt{3^{n_M}} + 1\right)^2 \leq \left(3^{6n_K} + 1\right)^2.\]
Thus, so long as we choose $S_K$ to contain all primes at most
$\left(3^{6n_K} + 1\right)^2$, we are done.

\paragraph{\boldmath Case 3: $\psi_1^{12} = \cyc_\ell^6$.}
The Weil pairing implies
\[\psi_2^{12} = (\cyc_\ell \otimes \psi_1^{-1})^{12} = \cyc_\ell^{12} \otimes (\psi_1^{12})^{-1} = \cyc_\ell^{12} \otimes (\cyc_\ell^6)^{-1} = \cyc_\ell^6.\]
In other words,
\[\tilde{\rho}_{E, \ell}^{12} = \cyc_\ell^6 \oplus \cyc_\ell^6.\]

If $\rho_{E, \ell}$ is irreducible over $\ff_\ell$, then
$\tilde{\rho}_{E, \ell} = \rho_{E, \ell}$, so the projective
image of $\rho_{E, \ell}$ in $\pgl_2(\ff_\ell)$ has order at
most $6$. But by Lemma~$18'$ of \cite{quelques}
(which is stated for $K = \qq$, but the same proof
works as long as $\ell$ is unramified in $K$),
the projective image has an element of order at least $(\ell - 1)/4$.
Hence, as long as we choose $S_K$ to contain all primes 
less than $25$, it follows that
$\rho_{E, \ell}$ is already reducible over $\ff_\ell$.

In particular, $\cyc_\ell^6$ is the $12$th power
of some character valued in $\ff_\ell^\times$.
Since we are assuming $\ell$ is unramified in $K$,
the cyclotomic character surjects onto
$\ff_\ell^\times$. Thus, every $12$th power in $\ff_\ell^\times$
is a $6$th power, so $\ell \equiv 3$ mod $4$.

Now, suppose GRH holds for $K[\sqrt{-\ell}]$.
Then, by Corollary~\ref{cor:avo},
we could find a prime ideal $v$ of $K$
such that $v$ is split in $K[\sqrt{-\ell}]$,
of degree~$1$, does not lie over $3$,
and satisfies the inequality
\begin{align*}
\nm^{K}_\qq(v) &\leq \ccebavo \cdot (\log \Delta_{K[\sqrt{\pm \ell}]} + n_{K[\sqrt{\pm \ell}]} \log 3)^2 \\
&= 4\ccebavo \cdot (2\log \Delta_K + 2n_K \log 3 + n_K \log \ell)^2.
\end{align*}

We claim that $\psi_1(\pi_v) + \psi_2(\pi_v) = 0$,
for any $\ell$ more than some constant depending on $K$ alone. Indeed we have
\[\psi_1(\pi_v) + \psi_2(\pi_v) \equiv \sqrt{\nm^K_\qq(v)} \cdot (\zeta + \bar{\zeta}) \mod \mathfrak{l}\]
for some $12$th root of unity $\zeta$.
Since $v$ does not lie over $3$, the right-hand side
cannot be a nonzero rational number.
Thus, if $\psi_1(\pi_v) + \psi_2(\pi_v) \neq 0$, we have
a nontrivial divisibility condition on $\ell$:
\[\ell \, \left| \, \nm^{\qq\left[\sqrt{\nm^K_\qq(v)} \cdot (\zeta + \bar{\zeta})\right]}_\qq \left(\sqrt{\nm^K_\qq(v)} \cdot (\zeta + \bar{\zeta}) - (\psi_1(\pi_v) + \psi_2(\pi_v))\right)\right. .\]
But under any complex embedding, we have
\[\left|\sqrt{\nm^K_\qq(v)} \cdot (\zeta + \bar{\zeta}) - (\psi_1(\pi_v) + \psi_2(\pi_v))\right| \leq \left(1 + \sqrt{\nm^K_\qq(v)}\right)^2.\]
Since $\qq\left[\sqrt{\nm^K_\qq(v)} \cdot (\zeta + \bar{\zeta})\right]$
is a quadratic field, we have
\[\ell \leq \left(1 + \sqrt{\nm^K_\qq(v)}\right)^4 \leq \big(1 + 2\sqrt{\ccebavo} \cdot (2\log \Delta_K + 2n_K \log 3 + n_K \log \ell)\big)^4.\]

But this is clearly impossible for all $\ell$ larger
than a constant depending on $K$ alone.
Thus, we have $\psi_1(\pi_v) + \psi_2(\pi_v) = 0$. Hence,
\[(\psi_1(\pi_v) - \psi_2(\pi_v))^2 = (\psi_1(\pi_v) + \psi_2(\pi_v))^2 - 4 \psi_1(\pi_v) \psi_2(\pi_v) = - 4 \nm^K_\qq v, \]
which is a quadratic nonresidue mod $\ell$,
contradicting the reducibility of $\rho_{E, \ell}$.
\end{proof}

\begin{cor}\label{isogeny}
Under GRH,
the degrees of prime degree isogenies of elliptic curves over $K$
are bounded uniformly if and only if $K$ does not contain the Hilbert class field
of an imaginary quadratic field $F$ (i.e.\ if and only if there
are no elliptic curves with CM defined over $K$).
\end{cor}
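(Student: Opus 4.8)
The plan is to deduce Corollary~\ref{isogeny} directly from Theorem~\ref{ellcurve}. The ``only if'' direction (uniform boundedness from the absence of CM fields) is the substantive one and uses the theorem essentially as a black box; the converse follows from classical complex multiplication theory.

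First I would prove that if $K$ contains no Hilbert class field of an imaginary quadratic field, then the degrees of prime-degree isogenies of elliptic curves over $K$ are uniformly bounded --- concretely, by $\max_{\ell \in S_K} \ell$. A degree-$\ell$ isogeny $E \to E'$ over $K$ has kernel a $G_K$-stable subgroup of order $\ell$, hence yields a $G_K$-stable line in $E[\ell]$; in particular $E[\ell] \otimes \bar{\ff}_\ell$ is reducible with a \weight~$1$ associated character, so for $\ell \notin S_K$ Theorem~\ref{ellcurve} applies. I would then rule out both of its cases. In Case~\ref{ellcurve1} there is a CM elliptic curve $E'/K$ whose CM field $F$ is contained in $K$; since $F \subset K$, the action of $G_K$ on $\mend_{\bar K}(E') \otimes \qq = F$ is trivial, so every endomorphism of $E'$ is $K$-rational, and therefore $F(j(E')) \subset K$ is a ring class field of $F$ and hence contains the Hilbert class field $H_F$ --- forcing $K \supset H_F$, contrary to hypothesis. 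In Case~\ref{ellcurve2}, GRH fails for $K[\sqrt{-\ell}]$, contradicting the standing GRH assumption. Hence for $\ell \notin S_K$ no elliptic curve over $K$ admits an $\ell$-isogeny, which gives the bound.

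For the converse I would argue contrapositively: suppose $K \supset H_F$ for some imaginary quadratic $F$. Then $j(\cc/\oo_F) \in H_F \subset K$, so there is an elliptic curve $E'$ over $K$ with CM by $\oo_F$; as $F \subset K$, the CM is defined over $K$, so the $G_K$-action on each $E'[\ell]$ is $\oo_F$-linear. For every prime $\ell$ split in $F$ --- a set of density $1/2$ by Chebotarev --- one has $\oo_F/\ell\oo_F \cong \ff_\ell \times \ff_\ell$, so $E'[\ell]$ decomposes as a direct sum of two $G_K$-stable lines, each giving a degree-$\ell$ isogeny defined over $K$; thus the degrees are unbounded. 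The parenthetical reformulation of the hypothesis --- that $K$ admits no CM elliptic curve --- is equivalent to ``$K \supset H_F$ for some imaginary quadratic $F$'' by the same standard CM facts (a CM elliptic curve over $K$ has CM field embedding into $K$ and $j$-invariant generating a ring class field contained in $K$).

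I do not anticipate a real obstacle: the entire difficulty of the result is already contained in Theorem~\ref{ellcurve}. The only points requiring care are bookkeeping ones --- that a CM elliptic curve over $K$ whose CM field lies in $K$ has $K$-rational CM and hence forces $K$ to contain a ring (and so Hilbert) class field, and the classical equivalence between ``$K$ contains the Hilbert class field of an imaginary quadratic field'' and ``$K$ carries an elliptic curve with CM.''
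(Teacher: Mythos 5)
Your proposal is correct and follows essentially the same route as the paper: rule out Case~\ref{ellcurve1} of Theorem~\ref{ellcurve} via the classical fact that a CM elliptic curve over $K$ with CM field inside $K$ forces $K$ to contain the Hilbert class field, rule out Case~\ref{ellcurve2} by GRH, and for the converse use a curve with CM by $\oo_F$ and the split primes of $F$ to produce unboundedly many prime-degree isogenies. You simply spell out the CM bookkeeping (ring class fields, the splitting of $E'[\ell]$ at split primes) that the paper leaves implicit.
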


\begin{proof} 
If $K$ does not contain the Hilbert class field of an imaginary
quadratic field $F$, then there are no CM elliptic curves
which are defined over $K$ and whose CM-field is contained in $K$.
Thus, for $\ell \notin S_K$, we have that $\rho_{E, \ell}$
is absolutely irreducible for any elliptic curve $E$
defined over $K$. In particular, $E$ cannot admit an isogeny
of degree $\ell$.

Conversely, if $K$ contains the Hilbert class field of an imaginary
quadratic field $F$, then there is a CM elliptic curve defined
over $K$, whose CM-field is contained in $K$. But such a curve has
an isogeny of degree $\ell$ for all primes $\ell$ split in $F$.
\end{proof}

\section{Effective Results \label{sec:effective}}

In this section, we prove Theorem~\ref{effectivethm},
which makes Theorems~\ref{mainthm} and \ref{ellcurve}
(as well as Corollary~\ref{isogeny}) effective.
The method of proof does not depend essentially on GRH;
we only use GRH in Subsection~\ref{subsec:grheff}
when putting everything together
to get the final bound.

\subsection{\label{sec:cg} \boldmath The Quantity $c_g$}

\begin{lm} \label{cyclotomicfield}
Suppose $n$ is a positive integer and $[\qq[\zeta_n] : \qq] \leq 2g$.
Then $n \mid c_g$.
\end{lm}
\begin{proof}
It suffices to prove this Lemma in the case where
$n = q^k$ is a prime power. For every prime $p \neq q$,
we have a natural symplectic form on $\oo_{\qq[\zeta_n]} / (p)$
defined by
\[(\alpha, \beta) \mapsto \tr^{\qq[\zeta_n]}_\qq \left(\alpha \bar{\beta} - \beta \bar{\alpha}\right).\]
Multiplication by $\zeta_n$ thus defines an element
of $\spg_{[\qq[\zeta_n] : \qq]}(\ff_p)$ of order $n$
for all odd primes $p \neq q$.
Since we have an injection
$\spg_{[\qq[\zeta_n] : \qq]}(\ff_p) \hookrightarrow \spg_{2g}(\ff_p)$,
it follows that $n \mid c_g$ by the definition of $c_g$.
\end{proof}

\begin{thm} \label{thmcg}
We have an explicit formula
\[c_g = \prod_{\substack{
\text{prime powers $p^n$} \\
(p - 1)p^{n - 1} \leq 2g < (p - 1)p^n}}
p^n.\]
In particular, $c_1 = 12$. In
general, $c_g$ can be bounded by an exponential in $g$,
\[c_g \leq \ccg \cdot (7.4)^g \leq \ccg \cdot 8^g\]
for an effectively computable absolute constant $\ccg$.
\end{thm}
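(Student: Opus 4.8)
The plan is to prove the identity $c_g = C$, where $C$ denotes the product displayed in the statement, and then to read off $c_1$ and the exponential bound from the explicit form of $C$. First I would rewrite the product: grouping factors by primes gives $C = \prod_{p} p^{n_p}$, where for each prime $p \le 2g+1$ the exponent $n_p$ is the largest integer with $\phi(p^{n_p}) = (p-1)p^{n_p-1} \le 2g$, and primes $p > 2g+1$ do not occur. Since $\phi$ is multiplicative, this is exactly $C = \lcm\{\,q^a : q^a \text{ a prime power},\ \phi(q^a) \le 2g\,\} = \lcm\{\,n : \phi(n) \le 2g\,\}$. With this description the divisibility $C \mid c_g$ is immediate from Lemma~\ref{cyclotomicfield}, which says that every $n$ with $\phi(n) = [\qq[\zeta_n]:\qq] \le 2g$ divides $c_g$; hence so does their least common multiple.

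The substantive direction is $c_g \mid C$. It suffices to check that every integer $n$ occurring in the definition of $c_g$ satisfies $\phi(n) \le 2g$ (then $n \mid C$, so $c_g = \lcm(\cdots) \mid C$), and by multiplicativity of $\phi$ it is enough to bound each prime power $q^a$ with $q^{a} \,\|\, n$. Since an element of order $n$ in $\spg_{2g}(\ff_p)$ powers down to one of order $q^a$, it is enough to prove: \emph{if $\phi(q^a) > 2g$, then $\spg_{2g}(\ff_p)$ contains no element of order $q^a$ for infinitely many odd primes $p \ne q$} --- this contradicts the requirement in the definition of $c_g$ that $n$ work for all but at most one odd prime. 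The key input is a necessary condition on prime-to-$p$ orders in $\spg_{2g}(\ff_p)$: if $M \in \spg_{2g}(\ff_p)$ has order $q^a$ with $p \ne q$, then $M$ is semisimple, its $\bar{\ff}_p$-eigenvalues are $q^a$-th roots of unity, at least one of which (say $\zeta$) has exact order $q^a$, and the eigenvalue multiset is stable under $\gal(\bar{\ff}_p/\ff_p)$ (its characteristic polynomial lies in $\ff_p[x]$) and under $\lambda \mapsto \lambda^{-1}$ (that polynomial is palindromic, as $\det M = 1$). Hence the multiset contains the Frobenius orbit of $\zeta$ together with that of $\zeta^{-1}$; writing $d := \ord_{q^a}(p)$, this forces $d \le 2g$ in all cases, and $2d \le 2g$ unless $\zeta^{-1}$ lies in the Frobenius orbit of $\zeta$, i.e.\ unless $-1 \in \langle p \rangle \subset (\zz/q^a\zz)^\times$.

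It then remains to produce, whenever $\phi(q^a) > 2g$, a ``bad'' residue $r \in (\zz/q^a\zz)^\times$, meaning one with $\ord(r) > 2g$, or with $g < \ord(r) \le 2g$ and $-1 \notin \langle r \rangle$; for such $r$, the necessary condition above shows $\spg_{2g}(\ff_p)$ has no element of order $q^a$ once $p \equiv r \pmod{q^a}$. For $q$ odd, $(\zz/q^a\zz)^\times$ is cyclic of order $\phi(q^a) > 2g$, so a generator has order $> 2g$. For $q = 2$, the hypothesis $\phi(2^a) > 2g$ forces $a \ge 3$, so $(\zz/2^a\zz)^\times \cong \langle -1 \rangle \times \langle 5 \rangle$ with $|\langle 5 \rangle| = 2^{a-2}$; since $2^{a-1} = \phi(2^a) > 2g$ is even we have $2^{a-2} \ge g+1 > g$, and a generator $r$ of $\langle 5 \rangle$ is bad (whether or not $2^{a-2}$ exceeds $2g$) because $-1 \notin \langle 5 \rangle$. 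In either case Dirichlet's theorem on primes in arithmetic progressions produces infinitely many primes $p \equiv r \pmod{q^a}$, all odd and $\ne q$, none of whose $\spg_{2g}(\ff_p)$ contains an element of order $q^a$. This contradiction completes the proof that $c_g \mid C$, hence $c_g = C$.

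Finally, $c_1 = \lcm\{n : \phi(n) \le 2\} = \lcm(1,2,3,4,6) = 12$. For the asymptotic bound I would write $\log c_g = \sum_{p \le 2g+1} n_p \log p = \vartheta(2g+1) + \sum_{p\,:\,p(p-1)\le 2g}(n_p - 1)\log p$, where $\vartheta$ is the Chebyshev function; the error sum ranges over at most $\sqrt{2g}$ primes, each contributing at most $\log(2g)$, so it is $O(\sqrt{g}\,\log g)$, while an effective form of the prime number theorem ($\vartheta(x) < (1+\epsilon)x$ for $x$ large, in the style of Rosser--Schoenfeld) gives $\vartheta(2g+1) = 2g + o(g)$. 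Hence $\log c_g = 2g + o(g)$, and since $\log 7.4 > 2$ we obtain $c_g \le \ccg \cdot (7.4)^g \le \ccg \cdot 8^g$ after absorbing the subexponential factor, and the finitely many small $g$, into the effectively computable constant $\ccg$. The crux of the whole argument is the middle step: the element-order analysis in $\spg_{2g}(\ff_p)$, in particular keeping careful track of when inverse-closedness of the eigenvalue multiset doubles the dimension required (the dichotomy $-1 \in \langle p \rangle$ or not), together with the non-cyclic case $q = 2$.
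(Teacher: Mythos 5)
Your proposal is correct and takes essentially the same route as the paper: the divisibility $C \mid c_g$ via Lemma~\ref{cyclotomicfield}, and $c_g \mid C$ by using Dirichlet to pick primes $p$ in a residue class of large multiplicative order modulo $q^a$ and then counting, among the at most $2g$ eigenvalues, the Frobenius orbit of a primitive $q^a$-th root of unity together with its inverse orbit --- the paper does exactly this with a generator of $(\zz/q^a\zz)^\times$ for odd $q$ and a prime $\equiv 3 \pmod{2^a}$ for $q = 2$, followed by the same prime-number-theorem estimate. Two cosmetic points only: inverse-closedness of the eigenvalue multiset comes from the symplectic condition ($M$ is conjugate to $M^{-T}$), not from $\det M = 1$; and the correct reduction is directly from ``each prime power $q^a$ exactly dividing $n$ has $\phi(q^a) \le 2g$'' to $n \mid C$ (which is what you in fact use), since multiplicativity of $\phi$ does not yield $\phi(n) \le 2g$ from the prime-power bounds.
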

\begin{proof}
From the prime number theorem, we have
\[\prod_{\substack{\text{prime powers $p^n$} \\ (p - 1)p^{n - 1} \leq 2g < (p - 1)p^n}} p^n \ = \ e^{2g(1 + o(1))} \leq \ccg \cdot 7.4^g \leq \ccg \cdot 8^g\]
for some effectively computable absolute constant $\ccg$.
Thus, it suffices to verify the formula for $c_g$.
From Lemma~\ref{cyclotomicfield}, it is clear that $c_g$
is divisible by the above product.
To see the reverse implication, we need to show that every
prime power $p^n$ dividing $c_g$ divides the above product.

\paragraph{\boldmath Case 1: $p$ is odd.} By Dirichlet's Theorem,
we can find an odd prime $q \neq p$ whose class modulo $p^n$
generates the cyclic group $(\zz / p^n \zz)^\times$.
Now, suppose there is an element $X \in \spg_{2g}(\ff_q)$
of order $p^n$.
By applying the Frobenius automorphism of $\bar{\ff}_q$
to $X$, we see that if $X$ has an eigenvalue $\omega$,
then it must also have an eigenvalue $\omega^q$.
Therefore, every primitive $(p^n)$th root of unity is an
eigenvalue of $X$.
It follows that $(p - 1) p^{n - 1} = |(\zz / p^n\zz)^\times| \leq 2g$,
which completes the proof.

\paragraph{\boldmath Case 2: $p = 2$.} By Dirichlet's Theorem,
we can find an odd prime $q \equiv 3$ mod $2^n$.
Now, suppose there is an element $X \in \spg_{2g}(\ff_q)$
of order $2^n$.
By applying the Frobenius automorphism of $\bar{\ff}_q$
to $X$, we see that if $X$ has an eigenvalue $\omega$,
then it must also have an eigenvalue $\omega^q$.
Since $X \in \spg_{2g}(\ff_q)$,
we see that if $X$ has an eigenvalue $\omega$,
it must also have an eigenvalue $\omega^{-1}$.

But it is a standard fact from elementary number theory that
$q \equiv 3$ and $-1$ generate the multiplicative group
$(\zz / 2^n \zz)^\times$. Since $X$ has order $2^n$,
it has some eigenvalue
which is a primitive $(2^n)$th root of unity;
therefore, every primitive $(2^n)$th root of unity is an
eigenvalue of $X$.
It follows that $2^{n - 1} = |(\zz / 2^n\zz)^\times| \leq 2g$, which completes the proof.
\end{proof}

\subsection{Balanced Characters \label{sec:actunits}}

In this subsection, we make Lemma~\ref{thetaisbalanced} effective;
we also note that balanced characters have a useful
condition on their archimedean valuations, which will be
helpful for making other arguments effective.

\begin{lm}\label{balancedabsval}
When $\theta^S$ is balanced with $S(\sigma) + S(\tau) = a'$,
then for any archimedean absolute value $|\ |$
on $K$, we have
\[|\theta^S(x)|= \sqrt{\nm^K_\qq(x)}^{a'}.\]
\end{lm}
\begin{proof}
Clear from Definition~\ref{def:good}.
\end{proof}

There is only a finite set of $\ell$ that can possibly 
be associated to a unbalanced character for some abelian variety. In what follows
we will quantify this possible unbalanced set.
For every unbalanced character $\theta^S$, there exists some unit $u_S$
for which $\theta^S(u_S)$ is not a root of unity.
Now, observe that we have a natural action of $\gal(\bar{\qq}/\qq)$
on the elements $S$ of $\zz[\Gamma_K]$.
If $S$ and $S'$ are related under this action,
then $\theta^S(u)$ is Galois-conjugate to $\theta^{S'}(u)$;
in particular, $\theta^S(u)$ is a root of unity if and only if
$\theta^{S'}(u)$ is a root of unity.
This implies that we can choose the $u_S$ so they depend only
on the orbit of $S$ under the action of $\gal(\bar{\qq}/\qq)$.
If we do this, then
\[\bchar(K, g) := \prod_{\text{$\theta^S$ unbalanced}} \left(1 - \theta^S(u_S)^{c_g/e}\right)\]
is a rational integer.
Moreover, by Definition~\ref{corresp}, any prime $\ell$
for which a corresponding character
$\theta^S$ is unbalanced must divide $\bchar(K, g)$.

\begin{lm} \label{bchar}
There exists an effectively computable absolute constant $\cchar$
such that we can choose the $u_S$ so that the following inequality holds:
\[\bchar(K, g) \leq \cchar \cdot \exp \left(\frac{2 \cdot R_K \cdot (r_K + 2)! \cdot (2g \cdot c_g + 1)^{n_K + 1} \cdot (\log n_K)^3}{|\log \log n_K|^3}\right).\]
\end{lm}

\begin{rem} 
The constant $\bchar(K, g)$ can be effectively directly computed for any
field $K$,
and in many interesting cases (such as quadratic imaginary fields)
is small.
\end{rem}

\begin{proof}
Define the multiplicative Minkowski embedding $\mu$ and functions $f^S$
as in the proof of Lemma~\ref{restons}. Suppose that $\theta^S$ is a unbalanced character.

Write $\Lambda_S \subset \Lambda$ for the kernel of $f^S$.
Since $\theta^S$ is unbalanced, it follows that $\Lambda_S \subsetneq \Lambda$.
Observe that by definition, $f^S$ does not vanish on any element of
the quotient lattice $\Lambda / \Lambda_S$.
Therefore, to choose $u_S$ so that $f^S(u_S)$
is small, it suffices to find a short
lattice vector of the quotient lattice $\Lambda / \Lambda_S$,
which we can do via Minkowski's Theorem if we first bound
\[\vol(\Lambda / \Lambda_S) = \frac{\vol(\Lambda)}{\vol(\Lambda_S)} = \frac{R_K \sqrt{r_K + 1}}{\vol(\Lambda_S)}.\]
(Here, we think about $\Lambda / \Lambda_S$ sitting inside the real
vector space $\rr_0^{r_K + 1} / \langle \Lambda_S \rangle$.
We give $\rr_0^{r_K + 1} / \langle \Lambda_S \rangle$
an inner product by identifying it with
the orthogonal complement of $\Lambda_S$
and taking the induced inner product from $\rr_0^{r_K + 1}$.)
To do this, we use the following theorem of Voutier:

\begin{cthm}[Voutier \cite{voutier}]
For any $u \in \oo_K^\times$ which is not a root of unity, we have the inequality
\[\log\left(\prod_{i = 1}^{n_K} \max(1, |\sigma_i(u)|)\right) \geq \alpha \qquad \text{where} \qquad \alpha = \frac{1}{4} \left(\frac{\log\log n_K}{\log n_K}\right)^3.\]
\end{cthm}

It follows that the length of any unit in $\Lambda$
under the $L^1$ norm (i.e.\ the sum of the absolute values
of the coordinates) satisfies
\[||\mu(u)||_{L^1} = \sum_{i = 1}^{r_1} \big|\log |\sigma_i(u)|\big| + \sum_{i = r_1 + 1}^{r_1 + r_2} 2 \big|\log |\sigma_i(u)|\big| = 2 \cdot \log \left(\prod_{i = 1}^{n_K} \max(1, |\sigma_i(u)|)\right) \geq 2\alpha.\]
(For the second equality above,
we have used $\sum_{i = 1}^{n_K} \log |\sigma_i(u)| = 0$.)
Now, extend the lattice $\Lambda_S$ to a lattice $\Lambda_S'$
of dimension $r_K + 1$ by adding more basis vectors which
are mutually orthogonal, orthogonal to $\Lambda_S$,
and whose length (under the Euclidean metric) equals
$2\alpha$. From the above Theorem, the (open) unit $L^1$-ball of
radius $2\alpha$ intersects the lattice $\Lambda_S'$
only at the origin. Therefore, by Minkowski's Theorem,
\[\vol(\Lambda_S') \geq \frac{1}{2^{r_K + 1}} \cdot \left(\frac{2^{r_K + 1}}{(r_K + 1)!} \cdot (2\alpha)^{r_K + 1}\right) = \frac{(2\alpha)^{r_K + 1}}{(r_K + 1)!}.\]
Now, write $c$ for the codimension of $\Lambda_S$ in $\Lambda$.
Then, we have
\[\vol(\Lambda_S) =  \frac{1}{(2\alpha)^{c + 1}} \cdot \vol(\Lambda_S') \geq \frac{1}{(2\alpha)^{c + 1}} \cdot \frac{(2\alpha)^{r_K + 1}}{(r_K + 1)!} = \frac{(2\alpha)^{r_K - c}}{(r_K + 1)!}\]
which gives
\[\vol(\Lambda / \Lambda_S) = \frac{\vol(\Lambda)}{\vol(\Lambda_S)} \leq \frac{R_K \cdot \sqrt{r_K + 1} \cdot (r_K + 1)!}{(2\alpha)^{r_K - c}}.\]

Applying Minkowski's Theorem again, we can find some vector
of $\Lambda / \Lambda_S$ whose length under the Euclidean
metric 
(we use the Euclidean metric because the $L^1$ metric on $\rr^{r_K + 1}$
does not induce the $L^1$ metric on $\rr^{r_K + 1}_0 / \langle\Lambda_S\rangle$)
is bounded by (here, we use that the volume of the unit
Euclidean $c$-ball is greater than that of the unit $L^1$-ball,
$2^c / c!$)
\begin{align*}
\left(\vol(\Lambda / \Lambda_S) \cdot c!\right)^\frac{1}{c} &\leq \left(\frac{R_K \cdot \sqrt{r_K + 1} \cdot (r_K + 1)!}{(2\alpha)^c} \cdot c!\right)^\frac{1}{c} \\
&= \frac{1}{2\alpha} \cdot \left(R_K \cdot \sqrt{r_K + 1} \cdot (r_K + 1)! \cdot c!\right)^\frac{1}{c} \\
\intertext{The above function is a decreasing function of $c$,
since $R_K \geq 1/5$ by \cite{reg}. Thus,}
&\leq \frac{R_K \cdot \sqrt{r_K + 1} \cdot (r_K + 1)!}{2\alpha}.
\end{align*}
Thus, we can find some vector
of $\Lambda / \Lambda_S$ whose length under the metric induced
from the $L^1$ metric on $\rr^{r_K + 1}$
is bounded by
\[\sqrt{r_K + 1} \cdot \frac{R_K \cdot \sqrt{r_K + 1} \cdot (r_K + 1)!}{2\alpha} \leq \frac{R_K \cdot (r_K + 2)!}{2\alpha}.\]
Now, observe that $|f^S(u)| \leq 2g \cdot c_g \cdot ||\mu(u)||_{L^1}$ for any $S$.
Therefore, for any unbalanced character $\theta^S$,
we can select $u_S$ such that $\theta^S(u_S)$ is not a root of unity, and
\[ \big| \log |\theta^{S'}(u_S)^{c_g/e}| \big| \leq M \qquad \text{where} \qquad M := 2g \cdot c_g \cdot \frac{R_K \cdot (r_K + 2)!}{2\alpha} \]
where $S'$ is any subset of $\emb{K}$.
This gives
\begin{align*}
\bchar(K, g) &= \prod_{\substack{S \subset \emb{K} \\ \text{$\theta^S$ is unbalanced}}} \left(1 - \theta^S(u_S)^{c_g/e}\right) \\
&\leq \prod_{\substack{S \subset \emb{K} \\ \text{$\theta^S$ is unbalanced}}} \left(1 + |\theta^S(u_S)^{c_g/e}| \right) \\
&\leq (1 + \exp(M))^{(2g \cdot c_g + 1)^{n_K}}.
\end{align*}
Since $\exp(M) \geq \cbar \cdot (2g \cdot c_g + 1)^{n_K}$
for some effectively computable absolute constant $\cbar$,
we have for some effectively computable absolute constant $\cchar$,
\begin{align*}
\bchar(K, g) &\leq \cchar \cdot \exp(M \cdot (2g \cdot c_g + 1)^{n_K}) \\
&= \cchar \cdot \exp \left(\frac{2 \cdot R_K \cdot (r_K + 2)! \cdot (2g\cdot c_g + 1)^{n_K + 1} \cdot (\log n_K)^3}{(\log \log n_K)^3}\right). && \qedhere
\end{align*}
\end{proof}

For the remainder of this paper, we suppose that
$\ell \nmid \bchar(K, g)$; in particular, this implies
that $\theta^S$ is a balanced character.

\subsection{Bounds for Theorem~\ref{thm:charclass}}

We begin this section by bounding the number of possible
values of $\psi_\cc(v)$.

\begin{lm} \label{allabelian}
Let $v \subset \oo_K$ be a prime ideal,
with $\nm^K_\qq v \leq V$.
Then, as $A$ ranges over all abelian varieties of dimension $g$,
$d$ ranges over all integers between $0$ and $2g$,
and $\ell$ ranges over all rational primes coprime to $x$,
there are at most
\[\bposs(K, g, V) = \cpossfrob \cdot \left(256 \cdot V\right)^{\frac{g(g + 1)}{4}}\]
possible values of $\psi_\cc(v)$, for some effectively
computable absolute constant $\cpossfrob$. Moreover,
the magnitude of $\psi_\cc(v)$ under any complex
embedding is bounded by $V^g$.
\end{lm}
\begin{proof}
First, we count the number of possible polynomials
$P_\pi$ satisfying Lemma~\ref{frobpoly},
assuming that all of its roots have magnitude
equal to the square root of the norm of $v$, under any complex embedding.
The first $g + 1$ coefficients of $P_\pi$
determine the rest, since the roots of $P_\pi$ come in pairs
which multiply to the norm of $v$.
Thus, to find the total number of possible such polynomials,
we can multiply
together the number of possible values for the coefficient
of $z^{2g - i}$ in $P_\pi$ for $0 \leq i \leq g$.
This gives
\begin{align*}
\prod_{0 \leq i \leq g} \left(2 \cdot \binom{2g}{i} \cdot \left(\sqrt{\nm^K_\qq (x)}\right)^i\right) &= 2^{g + 1} \cdot \left(\nm^K_\qq (x)\right)^{\frac{g(g + 1)}{4}} \cdot \prod_{0 \leq i \leq 3} \binom{2g}{i} \cdot \prod_{4 \leq i \leq g} \binom{2g}{i} \\
&\leq 2^{g + 1} \cdot \left(\nm^K_\qq (x)\right)^{\frac{g(g + 1)}{4}} \cdot \prod_{0 \leq i \leq 3} \binom{2g}{i} \cdot \left(2^{2g}\right)^{g - 3} \\
&\leq \frac{\ctmp}{64^g} \cdot \left(256 \cdot \nm^K_\qq (x)\right)^{\frac{g(g + 1)}{4}}
\end{align*}
for some effectively computable absolute constant $\ctmp$.

Now, any value of $\psi_\cc(v)$
can be written as a product of distinct roots of such a polynomial,
times a $c_g$th root of unity, times a power of $\nm^K_\qq v$
which is at most $g$. Therefore, the number of possible values of
$\psi_\cc(v)$ is at most the product of the number of possible
polynomials calculated above and
$2^{2g} \cdot c_g \cdot (g + 1) \leq \ccg \cdot 64^g$.
This implies the statement of this lemma.
\end{proof}

\noindent
Let us fix a direct sum decomposition
\[\cla(K) \simeq \zz / h_K'\zz \oplus H(K),\]
for a subgroup $H(K) \subset \cla(K)$, and fix a generator
$\alpha$ of $\zz / h_K' \zz$.
From elementary group theory, the image of any homomorphism
from $\cla(K)$ to a cyclic group is generated by the image
of some element in $\alpha + H(K)$.

Choose prime ideals
$\{v_1, v_2, \ldots, v_{h_K / h_K'}\}$
which are of degree~$1$, coprime to $c_g \cdot h_K$,
and represent each element of $\alpha + H(K)$.
Also, write $(v_i)^{h_K'} = (x_i)$.

In order to make Theorem~\ref{thm:charclass}
effective, we note that if $\psi$ is an associated character
which does not satisfy the conclusion of Theorem~\ref{thm:charclass},
then our proof shows that either $\theta^S$ is unbalanced,
or there is some $i$ so that
Lemma~\ref{equality} fails for $v_i$.
Thus the quantity $\brat(K, g)$, which we define to be
the product of
\[\prod_{\text{balanced characters $\theta^S$}}\left(\prod_{i = 1}^{h_K / h_K'} \prod_{\substack{\text{possible values of $\psi(\pi_{v_i})$} \\ \psi(\pi_{v_i})^{c_g \cdot h_K'} \neq \theta^S(x_i)^{c_g/e}}} \left(\psi(\pi_{v_i})^{c_g \cdot h_K'} - \theta^S(x_i)^{c_g/e}\right)\right)\]
with all primes which lie under one of the $v_i$
or divide $\Delta_K$,
is a rational integer with the property
that we can take $S_{K, g}$ to consist of all primes
dividing $\brat(K, g) \cdot \bchar(K, g)$.

\begin{lm} \label{lm:brat} We have
\[\brat(K, g) \leq \left(2 \cdot V^{2g \cdot c_g \cdot h_K'}\right)^{\sqrt{\left(2g \cdot c_g + 1\right)^{n_K}} \cdot (h_K/h_K') \cdot \bposs(K, g, V)} \cdot V^{h_K / h_K'} \cdot \Delta_K,\]
where $V$ is the maximum norm of the $v_i$.
\end{lm}

\begin{proof} Under any complex embedding, Lemma~\ref{balancedabsval}
implies that
\begin{align*}
\left|\psi(\pi_{v_i})^{c_g \cdot h_K'} - \theta^S(x_i)^{c_g/e}\right| &\leq \left|\psi(\pi_{v_i})^{c_g \cdot h_K'}\right| + \left|\theta^S(x_i)^{c_g/e}\right| \\
&\leq 2 \cdot |\nm^K_\qq v_i|^{2g \cdot c_g \cdot h_K'} \\
&\leq 2 \cdot V^{2g \cdot c_g \cdot h_K'}
\end{align*}
which completes the proof, using Lemma~\ref{allabelian},
together with the fact that there at most
$\sqrt{(2g \cdot c_g + 1)^{n_K}}$
balanced characters. (The factor of $V^{h_K / h_K'}$ is there to
account for the primes which lie under one of the $v_i$.)
\end{proof}

\subsection{\label{subsec:grheff} Proof of Theorem~\ref{effectivethm}}

In this subsection, we prove Theorem~\ref{effectivethm}.
While Theorem~\ref{mainthm} is true unconditionally, assuming
GRH allows us to get a significantly better bound.

\begin{rem}
Using an unconditional version of the Chebotarev Density Theorem
(see Remark~\ref{remuncond}),
one could make Theorem~\ref{effectivethm} unconditional using
the same method.
In the case of $g = 1$, there is also an unconditional
bound due to David; see Theorem~2 of \cite{david}.
(The logarithm of David's bound is roughly the $n_K$th power
of the logarithm of the conditional bound given here.)
\end{rem}

\begin{thm}\label{effectivethm}
Under GRH,
there are effectively computable absolute constants
$\cell$, $\cprod$, and $\cblah$ such that
we can take in
Theorems~\ref{ellcurve} and~\ref{mainthm}
\begin{align*}
\prod_{\ell \in S_K} \ell &\leq \exp\Big(\cell^{n_K} \cdot \left(R_K \cdot n_K^{r_K} + h_K^2 \cdot (\log \Delta_K)^2 \right)\Big) \\
\prod_{\ell \in S_{K, g}} \ell &\leq \exp\left(\cprod^{n_K} \cdot \Big(
8^{g(n_K + 1)} \cdot R_K \cdot n_K^{r_K} + 3^{g \cdot n_K} \cdot \big(\cblah \cdot g \cdot h_K \cdot n_K \cdot \log \Delta_K\big)^{\frac{g(g + 1)}{2} + 1}\Big)\right).
\end{align*}
\end{thm}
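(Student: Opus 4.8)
The plan is to collect the effective estimates already established in Section~\ref{sec:effective} and feed them into the description of $S_{K,g}$ found there. Recall from the paragraph preceding Lemma~\ref{lm:brat} that, after enlarging by the harmless finite set of primes dividing $c_g\cdot\Delta_K$, we may take $S_{K,g}$ to be the set of rational primes dividing the integer $\brat(K,g)\cdot\bchar(K,g)$; hence $\prod_{\ell\in S_{K,g}}\ell\le c_g\cdot|\Delta_K|\cdot\brat(K,g)\cdot\bchar(K,g)$, and it suffices to estimate the three factors. The first is Theorem~\ref{thmcg} ($c_g\le\ccg\cdot 8^g$), the third is Lemma~\ref{bchar}, and the second is Lemma~\ref{lm:brat} --- except that Lemma~\ref{lm:brat} is stated in terms of the quantity $V$, the largest norm of the auxiliary primes $v_1,\dots,v_{h_K/h_K'}$, which still needs to be bounded.

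To bound $V$, I would apply the effective Chebotarev theorem in the form of Corollary~\ref{cor:avo}, taking the Galois extension to be the Hilbert class field $H_K/K$ and the integer $N$ to be $c_g\cdot h_K$: this produces, for every ideal class, a representative prime of degree~$1$, unramified in $K/\qq$ and coprime to $c_g h_K$, of norm at most $\ccebavo\bigl(\log|\Delta_{H_K}|+n_{H_K}\log(c_g h_K)\bigr)^2$. Because $H_K/K$ is unramified we have $|\Delta_{H_K}|=|\Delta_K|^{h_K}$ and $n_{H_K}=h_K n_K$, so after a standard upper bound for $h_K$ in terms of $|\Delta_K|$ and $n_K$, and $c_g\le\ccg 8^g$, this gives $V\le(\cblah\cdot g\,h_K\,n_K\log|\Delta_K|)^2$ for an absolute constant $\cblah$. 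Substituting this into $\bposs(K,g,V)$ and then into Lemma~\ref{lm:brat}, and using throughout the crude bounds $\binom{2g}{d}\le 4^g$, $r_K<n_K$, $h_K'\mid h_K$, $R_K\ge\tfrac15$, the estimate $1/\alpha\le(\text{abs.\ const.})\cdot n_K$ for the Voutier constant $\alpha$ of Lemma~\ref{bchar}, $(r_K+2)!\le(n_K+1)!$, and $c_g\le\ccg 8^g$, one checks that $\log\bchar(K,g)$ and $\log\brat(K,g)$ are each bounded by $(\text{abs.\ const.})^{n_K}$ times the first and second summands inside the exponential in the statement. The decisive point here is to route every occurrence of $g$ into the displayed factors $8^{g(n_K+1)}$ and $3^{gn_K}$ --- possible since $g\le 8^g$, so parasitic factors such as $g^{n_K}$, $c_g^{n_K}$, $16^{g(g+1)/2}$ are all dominated by powers of $8^{gn_K}$ --- and every occurrence of $K$ into the combinations $R_K n_K^{r_K}$, $h_K$, $n_K$, $\log|\Delta_K|$.

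For the elliptic-curve bound on $\prod_{\ell\in S_K}\ell$, I would go back through the proof of Theorem~\ref{ellcurve} and record exactly which primes must be excluded beyond those already lying in $S_{K,1}$ (handled by the previous paragraph specialised to $g=1$, $c_1=12$). These are: the primes $\ell<25$ used to force reducibility over $\ff_\ell$; the Merel primes $\ell\le(3^{6n_K}+1)^2$; and, from Case~3 of that proof, the primes for which the Chebotarev argument over $K[\sqrt{-\ell}]$ does not yet yield a contradiction, which satisfy $\ell\le C\bigl(\log|\Delta_K|+n_K\log\ell\bigr)^4$ for an absolute $C$, hence $\ell\le C'\bigl((\log|\Delta_K|)^4+n_K^{8}\bigr)$. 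Since the product of all primes up to a threshold $X$ is at most the exponential of a constant times $X$, the first two families contribute $\exp\bigl((\text{abs.\ const.})^{n_K}\bigr)$ and the Case~3 family contributes $\exp\bigl(C''((\log|\Delta_K|)^4+n_K^8)\bigr)$. Both are absorbed into $\exp\bigl(\cell^{n_K}(R_K n_K^{r_K}+h_K^2(\log|\Delta_K|)^2)\bigr)$: for the Case~3 term this uses the GRH-conditional effective Brauer--Siegel lower bound $h_K R_K\gg_{\varepsilon}|\Delta_K|^{\frac12-\varepsilon}$ together with the trivial upper bound on $R_K$, which jointly force $\max\bigl(h_K^2(\log|\Delta_K|)^2,\,R_K n_K^{r_K}\bigr)$ to dominate $(\log|\Delta_K|)^4$.

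The work is almost entirely bookkeeping, but the delicate part --- and the only place where anything beyond the earlier lemmas is needed --- is twofold: pinning down $V$ via effective Chebotarev over the Hilbert class field in a way compatible with the coprimality conditions imposed in Section~\ref{sec:effective}, and, for the elliptic-curve statement, invoking GRH a second time through Brauer--Siegel so that the $(\log|\Delta_K|)^4$ from Case~3 fits under the asserted bound. Verifying that the resulting constants $\cell,\cprod,\cblah$ really are independent of both $g$ and $K$ is the step most prone to error, which is why the estimates above are arranged so that all $g$-dependence is visibly confined to the explicit $8^{g(\cdot)}$ and $3^{g(\cdot)}$ factors.
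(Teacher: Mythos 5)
Your proposal matches the paper's proof essentially step for step: you bound $V$ by applying Corollary~\ref{cor:avo} to the Hilbert class field with $N = c_g\cdot h_K$ (using $\Delta_{H_K}=\Delta_K^{h_K}$, $n_{H_K}=h_K n_K$ and the standard bound on $h_K$), feed this into Lemmas~\ref{bchar} and~\ref{lm:brat} via Remark~\ref{besteff} exactly as the paper does, and treat the extra elliptic-curve primes (Merel and Case~3 of Theorem~\ref{ellcurve}) by Chebyshev-type bounds plus the GRH-effective Brauer--Siegel theorem, again as in the paper. The only deviation is cosmetic: you bound each Case~3 prime by $(\log \Delta_K)^4 + n_K^8$ directly instead of comparing against $\Delta_K^{1/4}$, but Brauer--Siegel enters at the same point and for the same purpose, so the argument is the same in substance.
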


\begin{rem} \label{besteff} In fact, we have proven (independently of GRH) that
\[\prod_{\ell \in S_{K, g}} \ell \leq \bchar(K, g) \cdot \brat(K, g).\]
\end{rem}

\begin{proof}
Under GRH, Corollary~\ref{cor:avo}
(applied to $N = c_g \cdot h_K$),
and $\log h_K \leq \frac{3}{2} \log \Delta_K$
(which follows from \cite{lenstra}, Theorem~6.5),
implies that we can choose the $v_i$ so that
\[V = \max \nm^K_\qq(v_i) \leq \ccebavo \cdot \big(\log \Delta_{H_K} + n_{H_K} \log (c_g \cdot h_K)\big)^2 \leq \cvbound \cdot g^2 \cdot h_K^2 \cdot n_K^2 \cdot (\log \Delta_K)^2\]
for some effectively computable absolute constant $\cvbound$.

We use the notation $f \lesssim g$ to mean that
$f \leq C \cdot g$ for an effectively computable absolute
constant $C$.

By Remark~\ref{besteff} above, it suffices to show
both of $\log \bchar(K, g)$ and $\log \brat(K, g)$
are bounded by a constant times
the logarithm of the right-hand side of the first inequality
appearing in the statement of the theorem.
First, we bound $\bchar(K, g)$, as follows.
\[\log \bchar(K, g) \lesssim \frac{R_K \cdot (r_K + 2)! \cdot (2g \cdot c_g + 1)^{n_K + 1} \cdot (\log n_K)^3}{|\log \log n_K|^3} \lesssim 8^{g(n_K + 1)} \cdot R_K \cdot (\cprod n_K)^{r_K}\]
for some effectively computable absolute constant $\cprod$.
Next, we bound $\brat(K, g)$, using $\log h_K \leq \frac{3}{2} \log \Delta_K$:
\begin{align*}
\log \brat(K, g) &\lesssim g \cdot c_g \cdot h_K' \cdot \sqrt{(2g \cdot c_g + 1)^{n_K}} \cdot (\log g + \log h_K + \log n_K + \log \log \Delta_K) \\
&\qquad \cdot (h_K / h_K') \cdot \left(256 \cdot \cvbound \cdot g^2 \cdot h_K^2 \cdot n_K^2 \cdot (\log \Delta_K)^2\right)^{\frac{g(g+1)}{4}} \\
&\lesssim (\cblahblah \cdot 3^{g})^{n_K} \cdot \big(\cblah \cdot g \cdot h_K \cdot n_K \cdot \log \Delta_K\big)^{\frac{g(g+1)}{2} + 1}
\end{align*}
for effectively computable absolute constants $\cblah$ and $\cblahblah$.

Now, from the proof of Theorem~\ref{ellcurve}, it suffices
to show that the logarithm of the product of all primes $\ell$ for which
\[\ell \leq \left(3^{12n_K} + 1\right)^2 \quad \text{or} \quad \ell \leq \big(1 + 2\sqrt{\ccebavo} \cdot (2\log \Delta_K + 2n_K \log 3 + n_K \log \ell)\big)^8\]
is bounded by an absolute constant times the logarithm of the
right-hand side of the second inequality appearing in the statement
of the theorem. For the product of all primes satisfying
the first of the above two inequalities, this is clear from
the prime number theorem.

For the second factor, note that the
Brauer-Siegel theorem (which is effective under GRH) implies that
\[\cell^{n_K} \cdot \left(R_K \cdot n_K^{r_K} + h_K^2 \cdot (\log \Delta_K)^4 \right) \gtrsim h_K^2 + n_K^{n_K/2} \cdot R_K \gtrsim n_K^{n_K/3} \cdot (h_K R_K)^{2/3} \gtrsim \Delta_K^{1/4}.\]
Thus, it suffices to show that for $\ell \geq \sqrt[4]{\Delta_K}$ and more than
an effectively computable absolute constant, we have
\[\ell \geq \big(1 + 2\sqrt{\ccebavo} \cdot (2\log \Delta_K + 2n_K \log 3 + n_K \log \ell)\big)^4.\]
But this is clear, since Minkowski's bound together with the assumption
that $\ell \geq \sqrt[4]{\Delta_K}$ imply that the right-hand side of the above
inequality is at most an effectively computable absolute constant
times $(\log \ell)^8$.
This completes the proof.
\end{proof}

\bibliography{varieties-bibliography}{}

\begin{thebibliography}{10}

\bibitem{bach}
Eric Bach and Jonathan Sorenson.
\newblock Explicit bounds for primes in residue classes.
\newblock {\em Math. Comp.}, 65(216):1717--1735, 1996.

\bibitem{david}
Agn\`es David.
\newblock Caract\`ere d'isog\'enie et crit\`eres d'irr\'eductibilit\'e.
\newblock Preprint available online at \url{http://arxiv.org/abs/1103.3892}.

\bibitem{travshim}
Pierre Deligne.
\newblock Travaux de {S}himura.
\newblock In {\em S\'eminaire {B}ourbaki, 23\`eme ann\'ee (1970/71), {E}xp.
  {N}o. 389}, pages 123--165. Lecture Notes in Math., Vol. 244. Springer,
  Berlin, 1971.

\bibitem{faltings}
G.~Faltings.
\newblock Endlichkeitss\"atze f\"ur abelsche {V}ariet\"aten \"uber
  {Z}ahlk\"orpern.
\newblock {\em Invent. Math.}, 73(3):349--366, 1983.

\bibitem{reg}
Eduardo Friedman.
\newblock Analytic formulas for the regulator of a number field.
\newblock {\em Invent. Math.}, 98(3):599--622, 1989.

\bibitem{monodromy}
Alexander Grothendieck.
\newblock Mod\`eles de {N}\'eron et monodromie.
\newblock In {\em S\'eminaire de G\'eom\'etrie Alg\'ebrique, Volume 7, Expos\'e
  9}.

\bibitem{lo}
J.~C. Lagarias and A.~M. Odlyzko.
\newblock Effective versions of the {C}hebotarev density theorem.
\newblock In {\em Algebraic number fields: {$L$}-functions and {G}alois
  properties ({P}roc. {S}ympos., {U}niv. {D}urham, {D}urham, 1975)}, pages
  409--464. Academic Press, London, 1977.

\bibitem{lenstra}
H.~W. Lenstra, Jr.
\newblock Algorithms in algebraic number theory.
\newblock {\em Bull. Amer. Math. Soc. (N.S.)}, 26(2):211--244, 1992.

\bibitem{mazur}
B.~Mazur.
\newblock Rational isogenies of prime degree (with an appendix by {D}.
  {G}oldfeld).
\newblock {\em Invent. Math.}, 44(2):129--162, 1978.

\bibitem{merel}
Lo{\"{\i}}c Merel.
\newblock Bornes pour la torsion des courbes elliptiques sur les corps de
  nombres.
\newblock {\em Invent. Math.}, 124(1-3):437--449, 1996.

\bibitem{defmoduli}
J.~S. Milne.
\newblock Abelian varieties defined over their fields of moduli. {I}.
\newblock {\em Bull. London Math. Soc.}, 4:370--372, 1972.

\bibitem{milne}
J.S. Milne.
\newblock Complex multiplication (v0.00), 2006.
\newblock Available at \url{www.jmilne.org/math/}.

\bibitem{momose}
Fumiyuki Momose.
\newblock Isogenies of prime degree over number fields.
\newblock {\em Compositio Math.}, 97(3):329--348, 1995.

\bibitem{raynaud}
Michel Raynaud.
\newblock Sch\'emas en groupes de type {$(p,\dots, p)$}.
\newblock {\em Bull. Soc. Math. France}, 102:241--280, 1974.

\bibitem{rizov}
Jordan Rizov.
\newblock Fields of definition of rational points on varieties.
\newblock 2005.
\newblock Available at \url{http://arxiv.org/abs/math/0505364}.

\bibitem{abelianladic}
Jean-Pierre Serre.
\newblock {\em Abelian {$l$}-adic representations and elliptic curves}.
\newblock McGill University lecture notes written with the collaboration of
  Willem Kuyk and John Labute. W. A. Benjamin, Inc., New York-Amsterdam, 1968.

\bibitem{serre}
Jean-Pierre Serre.
\newblock Propri\'et\'es galoisiennes des points d'ordre fini des courbes
  elliptiques.
\newblock {\em Invent. Math.}, 15(4):259--331, 1972.

\bibitem{quelques}
Jean-Pierre Serre.
\newblock Quelques applications du th\'eor\`eme de densit\'e de {C}hebotarev.
\newblock {\em Inst. Hautes \'Etudes Sci. Publ. Math.}, (54):323--401, 1981.

\bibitem{symplect}
Goro Shimura.
\newblock Algebraic number fields and symplectic discontinuous groups.
\newblock {\em Ann. of Math. (2)}, 86:503--592, 1967.

\bibitem{voutier}
Paul Voutier.
\newblock An effective lower bound for the height of algebraic numbers.
\newblock {\em Acta Arith.}, 74(1):81--95, 1996.

\end{thebibliography}


\begin{thebibliography}{ram}

\bibitem{conrad}
B. Conrad, \textit{Lifting global representations with local properties}, 
2010 preprint available at the webpage \url{http://math.stanford.edu/~conrad/papers/locchar.pdf}.

\bibitem{mordell} B. Conrad, \textit{Semistable reduction for abelian varieties},
seminar lecture notes available at the webpage \url{http://math.stanford.edu/~akshay/ntslearn.html}.


\bibitem{FontainePeriods} J-M. Fontaine, ``Le corps des p\'eriodes $p$-adiques'' in
\textit{P\'eriodes $p$-adiques}, Ast\'erisque 223 (1994), pp. 59--111. 

\bibitem{FontaineSST} J-M. Fontaine, ``Repr\'esentations $p$-adiques semi-stables'' in
\textit{P\'eriodes $p$-adiques}, Ast\'erisque 223 (1994), pp. 113--184.



\bibitem{serreapp} J-P. Serre, \textit{Abelian $\ell$-adic representations and elliptic curves} (2nd ed.), 
A.K. Peters, 1998. 

\end{thebibliography}
\bibliographystyle{plain}

\appendix

\section{\label{detcrys} A Determinantal Comparison (by Brian Conrad)}

\subsection{Motivation}

For a linear representation $\rho$ of
a group $\Gamma$ on a finitely generated module $V = \prod V_i$ 
over a finite product $\prod F_i$ of fields $F_i$, 
we get a determinant $\det \rho\colon \Gamma \to \prod F_i^{\times}$
via the $\Gamma$-action on $\prod \det(V_i)$.
(This is the usual $(\prod F_i)$-linear determinant when $V$ is free as a $(\prod F_i)$-module.)
The case of interest to us
will be $\Gamma = G_K := \gal(K_s/K)$
for a finite extension $K$ of
$\qq_p$  and the action of $G_K$ on $V = \vv_p(A)$
for an abelian variety $A$ of dimension $g > 0$ over $K$.

Consider a commutative subfield $F \subseteq \mend_K^0(A)$
(which could even be $\qq$, though
that case will not be interesting), so for $F_p := \qq_p \otimes_{\qq} F$,  
there is a natural continuous $F_p$-linear representation $\rho_{A,p}$
of $G_K$ on $\vv_p(A)$.  This 
yields a continuous determinant homomorphism
$G_K^{\rm{ab}} \to F_p^{\times}$.
Composing with
the Artin map $r_K\colon K^{\times} \to G_K^{\rm{ab}}$,
we get a composite map
\[\begin{CD}
\psi_A \colon K^\times @>{r_K}>> G_K^{\rm{ab}} @>{\det_{F_p} \rho_{A,p}}>> F_p^{\times}.
\end{CD}\]

A natural question, inspired by the use of the reflex norm in the Main Theorem
of complex multiplication, 
is to ask whether the restriction of $\psi_A$
to an open subgroup of $\oo_K^{\times}$ 
can be described in terms of 
the $F$-action on $\lie(A)$. 
To be precise, $\lie(A)$ is naturally a module over $K \otimes_{\qq} F = K \otimes_{\qq_p} F_p$,
so $K$ acts $F_p$-linearly on $\lie(A)$ and hence we can take the $F_p$-linear
determinant of (the inverse of) the 
$K^{\times}$-action on $\lie(A)$:
\[\chi_A(a) = \det_{F_p} \big((x \mapsto a \cdot x) \colon \lie(A) \to \lie(A) \big).\]

It is natural to ask if the homomorphisms $\psi_{A}, \chi_A\colon
K^{\times} \rightrightarrows F_p^{\times}$
are equal on an open subgroup of $\oo_K^{\times}$
when $F$ is a totally real or CM field.  The answer is affirmative
without archimedean restrictions on $F$, and 
by using $p$-adic Hodge theory in the form due to Fontaine
(see \cite{FontainePeriods}, \cite{FontaineSST}), which goes 
beyond what was used by Serre and Tate (who worked
in the area before the discovery of $B_{\rm{crys}}$) we can say a bit more:

\begin{thm}\label{maps} The two maps $K^{\times} \rightrightarrows F_p^{\times}$ coincide
on an open subgroup of $\oo_K^{\times}$.  If $A$ has semistable reduction then these maps 
agree on $\oo_K^{\times}$.
\end{thm}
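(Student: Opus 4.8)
The plan is to realize $\psi_A$ as the composite of the local Artin map with a $p$-adic Hodge-theoretically constrained character, and to compute the Hodge--Tate weights of that character directly in terms of $\lie(A)$. Write $\eta := \det_{F_p}\rho_{A,p}\colon G_K^{\mathrm{ab}}\to F_p^\times$, so that $\psi_A = \eta\circ r_K$. The input I would import from $p$-adic Hodge theory is that $V := V_p(A)$ is a de Rham representation of $G_K$ which is potentially semistable, which is semistable as soon as $A$ has semistable reduction, and which is crystalline when $A$ has good reduction; for abelian varieties this may be deduced from Fontaine's $C_{\mathrm{st}}$ (Tsuji) and $C_{\mathrm{cris}}$ (Faltings), or more directly from the theory of $p$-divisible groups, since semistable reduction of $A$ exhibits $A[p^\infty]$ as an extension of an $\oo_K$-extendable $p$-divisible group by a multiplicative one. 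All of these comparison isomorphisms are functorial in $A$, hence $F$-equivariant for $F\subseteq\mend_K^0(A)$, so they descend to the $F_p$-linear determinant $\eta$.

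The heart of the argument is then the Hodge--Tate weight computation. The de Rham comparison identifies $D_{\mathrm{dR}}(V)$ with the de Rham homology $H_1^{\mathrm{dR}}(A/K) = H^1_{\mathrm{dR}}(A/K)^\vee$ as a filtered $K\otimes_{\qq}F_p$-module; its Hodge filtration has exactly two nonzero graded pieces, namely $\lie(A)$ in degree $-1$ and the cotangent space $\lie(A^\vee)^\vee$ in degree $0$ (the tangent part being the one that carries the Tate twist). Since $D_{\mathrm{dR}}$ commutes with the $F_p$-linear determinant and the filtration jump of a determinant is the sum of the individual jumps, after composing with an embedding $\sigma\colon F\hookrightarrow\qqbar_p$ and decomposing $\lie(A)\otimes_{F,\sigma}\qqbar_p=\bigoplus_{\tau\colon K\hookrightarrow\qqbar_p}\lie(A)_{\tau,\sigma}$ into $K$-eigenlines, I would read off that the $\sigma$-component of $\eta$ has Hodge--Tate weight $\dim_{\qqbar_p}\lie(A)_{\tau,\sigma}$ at $\tau$ --- the cotangent summand contributes nothing because it lies in $\mathrm{Fil}^0$. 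But $a\mapsto\prod_\tau\tau(a)^{\dim_{\qqbar_p}\lie(A)_{\tau,\sigma}}$ is precisely the $\sigma$-component of $\chi_A$, by definition of $\chi_A$ as the $F_p$-determinant of the $K$-action on $\lie(A)$.

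To conclude I would invoke the classification of de Rham, and of crystalline, characters of $G_K$ with coefficients in a finite extension of $\qq_p$: the composite of such a character with $r_K$ agrees on an open subgroup of $\oo_K^\times$ with the algebraic character $u\mapsto\prod_\tau\tau(u)^{-n_\tau}$ attached to its Hodge--Tate weights $n_\tau$ (Serre's theorem that Hodge--Tate characters are locally algebraic, via Tate--Sen theory), and it agrees with that algebraic character on all of $\oo_K^\times$ when the character is crystalline (because a crystalline character is a product of the relevant Lubin--Tate characters with an unramified one, the Lubin--Tate characters realizing the norm compatibility exactly while unramified characters die on $\oo_K^\times$ under $r_K$). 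Applying this to each $\sigma$-component of $\eta$, and using that a rank-one semistable representation is automatically crystalline, gives $\psi_A = \chi_A$ on an open subgroup of $\oo_K^\times$ in general, and on all of $\oo_K^\times$ when $A$ has semistable reduction; the only remaining ambiguity is the inverse forced by whether one normalizes $r_K$ by arithmetic or geometric Frobenius, which is fixed once and for all.

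The step I expect to be the main obstacle is the middle one: making the identification $D_{\mathrm{dR}}(V)\cong H_1^{\mathrm{dR}}(A/K)$ genuinely $F_p$-linear and filtered, and then tracking, embedding by embedding, how the $K$-$F$ bimodule $\lie(A)$ governs the Hodge--Tate weights of the $F_p$-determinant. This is exactly where one must use $B_{\mathrm{dR}}$ --- and, for the semistable-reduction refinement, $B_{\mathrm{st}}$ --- rather than merely Tate's $\cc_p$-theory, and it is what lets the argument run with no archimedean hypothesis on $F$, in contrast to the approach available to Serre and Tate.
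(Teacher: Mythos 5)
Your proposal is essentially correct, but it takes a genuinely different route from the appendix. The appendix never invokes the de~Rham or semistable comparison isomorphism for $A$ itself: it first uses the N\'eron model and Grothendieck's orthogonality theorem to replace $\vv_p(A)$ by the Tate module $\vv_p(\Gamma)$ of the ``finite part'' $p$-divisible group $\Gamma$ over $\oo_K$ --- the quotient $\vv_p(A)/\vv_p(\Gamma)$ being unramified, this changes neither character on inertia, and one also has $\lie(A)=\lie(\Gamma)[1/p]$ --- and then works entirely with $\Gamma$: the Hodge--Tate decomposition of $\vv_p(\Gamma)$ gives the weights of $\psi$, the weights of the algebraic character attached to $\lie(\Gamma)[1/p]$ are computed by hand from Lubin--Tate groups together with the crystallinity of algebraic characters (Prop.~A.3 and B.4 of Conrad's preprint cited there), and equality of weights plus crystallinity of both sides yields agreement on all of $\oo_K^\times$. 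You instead apply the comparison theorems directly to $A$: identify $D_{\mathrm{dR}}(\vv_p(A))$ with $H_1^{\mathrm{dR}}(A/K)$ as a filtered $K\otimes_{\qq_p}F_p$-module, read off the weights of the $F_p$-determinant embedding by embedding from $\lie(A)$, and then quote the classification of Hodge--Tate (locally algebraic) and crystalline characters. That is a legitimate alternative: it avoids the N\'eron-model reduction, and for the unconditional statement you in fact only need the Hodge--Tate property; the cost is a heavier input in the semistable case, namely that $\vv_p(A)$ is a semistable representation, whereas the paper only needs crystallinity of $\vv_p(\Gamma)$ and of algebraic characters.

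One justification in your sketch is not valid as stated: semistable reduction does not exhibit $\vv_p(A)$ as a semistable representation ``directly from the theory of $p$-divisible groups'' merely because $A[p^\infty]$ sits in an extension involving an $\oo_K$-extendable part and a multiplicative/unramified part --- an extension of crystalline (or unramified) by crystalline representations need not be semistable, or even Hodge--Tate, so this step genuinely requires the monodromy/uniformization arguments of Fontaine, Coleman--Iovita, or Tsuji combined with K\"unnemann's semistable models, which you may of course cite. Fortunately, for your argument only the determinant matters, and there the extension structure \emph{does} suffice: $\det_{F_p}\vv_p(A)$ is the product of $\det_{F_p}\vv_p(\Gamma)$, which is crystalline, with an unramified character --- and that is exactly the reduction the paper performs. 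Finally, fix your sign conventions carefully (weight of $\qq_p(1)$, direction of the $K^\times$-action on $\lie(A)$, arithmetic versus geometric Frobenius in $r_K$); the statement as formulated, like Theorem~\ref{pdiv}, involves a reciprocal that must come out right rather than be waved away.
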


\begin{exe} If $A$ is a CM abelian variety with good reduction and $F$ is a CM field with
$[F:\qq] = 2g$ then this theorem recovers the inertial description of 
the reflex norm in the theory of complex multiplication when there is good reduction.
\end{exe}

We will use Grothendieck's orthogonality theorem
in the semistable case to reduce Theorem \ref{maps} to a more general assertion about 
$p$-divisible groups over the valuation ring of a finite extension of $K$.
The main point is to then recast this general assertion in terms of $p$-adic
Hodge theory, since that admits a robust tensorial structure
whereas $p$-divisible groups do not.  In what follows,
we will use the covariant Fontaine functors
(i.e., $(B \otimes_{\qq_p} V)^{G_K}$ rather than $\hom_{\qq_p[G_K]}(V,B)$).

\subsection{\boldmath Reformulation via $p$-divisible groups}

To prove Theorem \ref{maps}, it is harmless to 
replace $K$ with a finite extension so that $A$
has semistable reduction.  So we now assume this to be the case. 
Let $\cala$ denote the semi-abelian relative identity component
of the N\'eron model $N(A)$ over $\oo_K$ (i.e., the open complement in
$N(A)$ of the union of the non-identity components of the special fiber $N(A)_k$). 
The special fiber $\cala_k$ is an extension of an abelian variety $B$ by a torus $T$.  We will
use a filtration on $\vv_p(A)$ arising from the filtration on $\cala_k$ to reduce
our problem to an intrinsic question about $p$-divisible groups over $\oo_K$.

Now we recall Grothendieck's results on the structure
of $p$-adic Tate modules of abelian varieties with semistable reduction
over $p$-adic fields.  (This is developed in [SGA 7, Exp.\ IX];
see \cite[\S4--\S5]{mordell} for an exposition, especially \cite[Thm.\,5.5]{mordell}.)  Let
$a = \dim B$ and $t = \dim T$.  
The ``finite parts'' of the $\cala[p^n]$ (according to the decomposition of the quasi-finite flat separated
$\oo_K$-groups 
$\cala[p^n]$ via Zariski's Main Theorem) define a $p$-divisible group $\Gamma$ over
$\oo_K$ of height $2a + t$ lifting
$\cala_k[p^{\infty}]$.  This has generic fiber contained in
$A[p^{\infty}]$, and it is final among $p$-divisible groups over $\oo_K$ whose
generic fiber is equipped with a map to $A[p^{\infty}]$.
By Grothendieck's orthogonality theorem,  the quotient $\vv_p(A)/\vv_p(\Gamma)$
is the Galois representation
associated to the Cartier dual of the unique $p$-divisible group $\Gamma'_{\rm{t}}$ over $\oo_K$ 
lifting the $p$-divisible group $T'[p^{\infty}]$ of the maximal torus $T'$ in the special fiber of
the N\'eron model of the dual abelian variety $A'$.  Since
$\Gamma'_{\rm{t}}$ has \'etale Cartier dual (as this
holds for its special fiber $T'[p^{\infty}]$), it follows
that $\vv_p(A)/\vv_p(\Gamma)$ is {\em unramified}.  Hence, 
the inertial restriction of
$\psi_A$ is unaffected by replacing 
$\vv_p(A)$ with $\vv_p(\Gamma)$.  

The Lie algebra of
$A$ is the generic fiber of  $\lie(\cala)$, and
$\lie(\cala)$ is naturally identified with the Lie algebra of the formal $\oo_K$ group 
$\widehat{\cala} = {\rm{Spf}}(\oo^{\wedge}_{\cala,0})$ of $\cala$
(completion along the identity section over $\oo_K$).  But $\widehat{\cala}$
is the formal group over $\oo_K$ corresponding
to the connected part of the $p$-divisible group $\Gamma$
under the Serre--Tate equivalence between connected
$p$-divisible groups and commutative formal Lie groups
on which $[p]$ is an isogeny (over any complete
local noetherian ring with residue characteristic $p$).  
Hence, $\lie(A) = \lie(\Gamma)[1/p]$ functorially
in the isogeny category over $K$.  (Note
that $\lie(\Gamma)[1/p]$ is functorial
with respect to $K$-homomorphisms in
$\Gamma$, due to Tate's isogeny theorem for
$p$-divisible groups over $\oo_K$.) 
The $F_p$-action on $\vv_p(\Gamma)$
arises from an $F_p$-action on $\Gamma$ in the isogeny
category over $\oo_K$. 

We conclude that our entire problem is intrinsic to the $p$-divisible group $\Gamma$
over $\oo_K$, in the sense that it involves relating
the inertial action on $\det_{F_p}(\vv_p(\Gamma))$
to the $F_p$-determinant of the
$K^{\times}$-action on $\lie(\Gamma)[1/p]$. 
In this way, our problem makes sense more generally for
an arbitrary $p$-divisible group over $\oo_K$
equipped with an action by $F_p$ in the isogeny category over $\oo_K$. 
Decomposing $\Gamma$ (up to isogeny over $\oo_K$) according to the idempotents of $F_p$,
and renaming each factor field of $F_p$ as $F$, thereby
reduces Theorem \ref{maps} to: 

\begin{thm}\label{pdiv} Let $K$ be a finite extension of
$\qq_p$, $\Gamma$ a $p$-divisible group over $\oo_K$, 
and $F$ a finite
extension of $\qq_p$ equipped with an action on $\Gamma$ in the isogeny category
over $\oo_K$.  

Let $\chi\colon K^{\times} \to F^{\times}$
be defined by the reciprocal of the
$F$-linear determinant of the $K^{\times}$-action on $\lie(\Gamma)[1/p]$,
and let the composite map 
\[\psi\colon \begin{CD}
K^{\times} @>{r_K}>>  G_K^{\rm{ab}} @>>> F^{\times}
\end{CD}\]
be defined by the $F$-linear determinant of the $G_K$-action on $\vv_p(\Gamma)$.
Then $\psi|_{\oo_K^{\times}} = \chi|_{\oo_K^{\times}}$.
\end{thm}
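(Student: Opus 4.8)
The plan is to prove Theorem~\ref{pdiv} by recognizing $\det_F \vv_p(\Gamma)$ as a \emph{crystalline} character of $G_K$ and reading off its Hodge--Tate weights directly from $\lie(\Gamma)$. First I would observe that, since $\Gamma$ is a $p$-divisible group over $\oo_K$, the representation $\vv_p(\Gamma)$ is crystalline (Fontaine) with Hodge--Tate weights in $\{0,1\}$; as crystalline representations form a subcategory stable under tensor and exterior powers, the one-dimensional $F$-line $\bigwedge_F^{\dim_F \vv_p(\Gamma)} \vv_p(\Gamma)$ is a crystalline character $G_K \to F^{\times}$, and its composite with $r_K$ is exactly $\psi$. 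All of this is automatically $F$-equivariant, because $\vv_p$, $\lie$ and the Fontaine functors are additive and compatible with tensor operations, so the $F$-action — given only in the isogeny category — causes no trouble (here one uses Tate's full faithfulness theorem for $p$-divisible groups over $\oo_K$).

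Next I would invoke the classification of crystalline characters. Choose a finite extension $E/\qq_p$ containing the Galois closures of both $K$ and $F$, so that $F \otimes_{\qq_p} E \cong \prod_{\sigma\colon F\hookrightarrow E} E$. For each $\sigma$, the character $\sigma\circ(\det_F\vv_p(\Gamma))\colon G_K \to E^{\times}$ is crystalline, hence (Lubin--Tate theory, or the description of locally algebraic abelian characters together with the fact that crystallinity forces the ramified part to vanish) its composite with the Artin map restricts on $\oo_K^{\times}$ to the algebraic character $u \mapsto \prod_{\tau\colon K\hookrightarrow E}\tau(u)^{-h_{\sigma,\tau}}$, where the $h_{\sigma,\tau}$ are the $\tau$-labelled Hodge--Tate weights of $\sigma\circ(\det_F\vv_p(\Gamma))$. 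This is precisely where we get equality on \emph{all} of $\oo_K^{\times}$ rather than on a mere open subgroup: for a de Rham but non-crystalline character one would only control matters up to a finite-order twist, and in the setting of Theorem~\ref{maps} the passage to the $p$-divisible group $\Gamma$ over $\oo_K$ — legitimate once $A$ is semistable, via Grothendieck orthogonality — is exactly what makes the relevant character crystalline.

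Then I would compute the weights $h_{\sigma,\tau}$ using the Hodge--Tate decomposition of the Tate module of a $p$-divisible group over $\oo_K$ in Fontaine's refined crystalline form: the jump of the Hodge filtration of $\vv_p(\Gamma)$ is governed by $\lie(\Gamma)$, the weight-$0$ contribution coming only from the Cartier dual $\Gamma^\vee$. Taking $F$-linear determinants and extending scalars to $E$, it follows that $h_{\sigma,\tau}$ equals the multiplicity $m_{\sigma,\tau}$ with which the $(\sigma,\tau)$-isotypic component appears in $\lie(\Gamma)[1/p]$ as an $(F\otimes_{\qq_p}K)$-module (each weight-$1$ line contributes $1$, each weight-$0$ line contributes $0$). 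On the other hand, by definition $\sigma(\chi(u))$ is $\sigma$ applied to the reciprocal of the $F$-determinant of multiplication by $u$ on $\lie(\Gamma)[1/p]$; since multiplication by $u\in\oo_K^{\times}$ acts on the $(\sigma,\tau)$-isotypic component as the scalar $\tau(u)$, this equals $\prod_{\tau} \tau(u)^{-m_{\sigma,\tau}}$. Comparing, $\sigma(\psi(u)) = \sigma(\chi(u))$ for every $\sigma\colon F\hookrightarrow E$, and since these embeddings separate $F$ we conclude $\psi|_{\oo_K^{\times}} = \chi|_{\oo_K^{\times}}$.

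The main obstacle is not any of the conceptual steps but the normalization bookkeeping: one must fix consistently the covariant (rather than contravariant) Fontaine functors used in this appendix, the Tate twist in the Hodge--Tate decomposition, the reciprocal built into the definition of $\chi$, and the convention that $r_K$ carries uniformizers to arithmetic Frobenius, so that every sign lines up and the two characters agree on the nose. A lesser point is to verify that the $F$-equivariance of the Hodge--Tate decomposition is genuinely available in the isogeny category over $\oo_K$; this follows from the functoriality of all the constructions involved together with Tate's theorem, as noted above.
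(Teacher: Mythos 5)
Your proposal is correct, and it arrives at the same numerical matching as the appendix --- the $\tau$-labelled Hodge--Tate weights of the $\sigma$-component of $\det_F \vv_p(\Gamma)$ equal the multiplicities of the $(\sigma,\tau)$-isotypic pieces of $\lie(\Gamma)[1/p]$ --- but it distributes the $p$-adic Hodge theory differently. You put the crystalline structure on $\psi$ itself: you quote Fontaine's theorem that $\vv_p(\Gamma)$ is crystalline for a $p$-divisible group over $\oo_K$ (crystallinity then passes to the $F$-linear determinant, a subquotient of a $\qq_p$-exterior power), and then the classification of crystalline characters with coefficients, whose inertial restriction is the algebraic character with exponents the labelled weights; this gives equality on all of $\oo_K^\times$ in one stroke, with the weights read off from the Hodge--Tate decomposition. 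The appendix never uses crystallinity of $\vv_p(\Gamma)$: it uses only Tate's Hodge--Tate decomposition for $\Gamma$, and instead attaches the crystalline structure to the auxiliary character $\theta_{\chi}$ built from $\chi$ (via the torus-homomorphism description in Conrad's preprint), computing $D_{\mathrm{dR}}(\theta_{\chi})$ explicitly from Serre's Lubin--Tate description of the characters $\theta_{\sigma}$; this forces the extra reductions you avoid --- making $\oo_F$ act integrally, passing to an extension of $F$ splitting $K$ (harmless since only powers of the characters are affected), and the bootstrap that a crystalline character with finite inertia image is unramified, which upgrades agreement on an open subgroup to agreement on $\oo_K^\times$. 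In short, your route is shorter but leans on heavier black boxes whose content (Lubin--Tate computation plus crystalline rigidity) is exactly what the appendix proves by hand; and the normalization bookkeeping you flag does close up --- with covariant functors, the arithmetic Artin map, and the reciprocal in the definition of $\chi$, the signs match, as one checks on $\Gamma = \mu_{p^\infty}$ with $F = \qq_p$.
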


\subsection{Proof of Theorem \ref{pdiv}}

In view of how $\chi$ is constructed from a $(K \otimes_{\qq_p} F)$-module, 
it arises from a homomorphism of
$\qq_p$-tori
$\res^K_{\qq_p} \gm{K} \to \res^F_{\qq_p} \gm{F}$. Thus, by 
\cite[Prop.\,B.4(i)]{conrad}, $\chi|_{\oo_K^{\times}}$ 
is the $I_K$-restriction of a crystalline representation $G_K^{\rm{ab}} \to F^{\times}$. 
Hence, if $\psi$ and $\chi$ agree on an open subgroup of $\oo_K^{\times}$
then their ratio on $\oo_K^{\times}$ is
the $I_K$-restriction of a crystalline representation that is finite on $I_K$.  But
a crystalline $p$-adic representation of $G_K$ with finite
image on $I_K$ is unramified, so it would follow that $\chi$ and $\psi$ coincide
on $\oo_K^{\times}$.  In particular, if 
$\chi^e$ and $\psi^e$ coincide on $\oo_K^{\times}$ for some $e > 0$
(so $\chi$ and $\psi$ agree on the open subgroup
$(\oo_K^{\times})^e$) then we will be done.  

It is harmless to replace $\Gamma$ with an $\oo_K$-isogenous $p$-divisible group, so we may and do
assume that $\oo_F$ acts on $\Gamma$ (not just in the isogeny category).  
If $F'/F$ is a finite extension then it is harmless to replace $\Gamma$ with its power
$\oo_{F'} \otimes_{\oo_F} \Gamma$ (defined in the evident manner), since
at the determinant level we would be replacing $\chi$ and $\psi$
with their $[F':F]$th powers, which we have seen is harmless.  Thus, we may and do arrange that 
$F$ splits $K/\qq_p$.

Let $\Gamma^{\vee}$ denote the dual of $\Gamma$, and consider
 the $\cc_K$-linear $G_K$-equivariant canonical Hodge--Tate decomposition
$\cc_K \otimes_{\qq_p} \vv_p(\Gamma) \simeq (\cc_K(1) \otimes_K t_{\Gamma}) \oplus
(\cc_K \otimes_K \hom_K(t_{\Gamma^{\vee}}, K))$, 
where $t_{\Gamma} := \lie(\Gamma)[1/p]$ (and similarly for $t_{\Gamma^{\vee}}$).
For later purposes, it will be convenient to apply the follow elementary lemma to rewrite
the second summand.

\begin{lm} Let $K$ and $F$ be finite separable extensions
of a field $k$.  For any finitely generated $(K \otimes_k F)$-module $W$,
the $(K \otimes_k F)$-modules $\hom_K(W,K)$ and $\hom_F(W,F)$
are naturally isomorphic, where
$F$ acts $K$-linearly on $\hom_K(W,K)$ through
functoriality applied to its $K$-linear action on $W$
and similarly for the $F$-linear $K$-action on $\hom_F(W,F)$.
\end{lm}

\begin{proof}
It suffices to prove that the natural $(K \otimes_k F)$-linear map
\[\hom_K(W,K) \to \hom_k(W,k) \quad \text{defined via} \quad \ell \mapsto \tr^K_k \circ \ell\]
is an isomorphism,
as then we can argue similarly with the roles of $K$ and $F$ swapped. 
This only involves the underlying $K$-vector space of $W$
(ignoring the $F$-action), so we can reduce to the trivial case
$W = K$.
\end{proof}

\noindent
We now rewrite the Hodge--Tate decomposition in the form
\begin{equation}\label{new}
\cc_K \otimes_{\qq_p} \vv_p(\Gamma) \simeq (\cc_K(1) \otimes_K t_{\Gamma}) \oplus
(\cc_K \otimes_K \hom_F(t_{\Gamma^{\vee}}, F)),
\end{equation}
where $\hom_F(t_{\Gamma^{\vee}},F)$ is a
$K$-vector space through functorality applied to the $F$-linear $K$-action on $t_{\Gamma^{\vee}}$. 
Since $F$ splits $K/\qq_p$, any $(K \otimes_{\qq_p} F)$-module $W$ 
(such as $t_{\Gamma}$ and $t_{\Gamma^{\vee}}$) 
decomposes into $F$-subspaces
\[W = \oplus_{\sigma} W_{\sigma}\]
according to a $\qq_p$-embedding
$\sigma\colon K \to F$ through which $K$ acts.  
That is, for $w \in W_{\sigma}$ we have $(c \otimes 1)w = \sigma(c)w$
for $c \in K$. 
We can therefore compute 
the $(\cc_K \otimes_{\qq_p} F)$-linear determinant on both sides of \eqref{new} 
by first collapsing the $K$-action into the $F$-structure
by decomposing modules into isotypic subspaces according to $\qq_p$-embeddings
$\sigma\colon K \to F$, then decomposing those subspaces into isotypic $\cc_K$-subspaces 
according to the $\qq_p$-embedding $F \to \cc_K$
through which $F$ acts, and then finally forming the $\cc_K$-determinant
of each such subspace of the latter sort.  Thus, 
 the $(\cc_K \otimes_{\qq_p} F)$-determinant of the left side 
of \eqref{new} is $\cc_K \otimes_{\qq_p} \psi = \cc_K \otimes_K (K \otimes_{\qq_p} \psi)$ and the 
$(\cc_K \otimes_{\qq_p} F)$-determinant of the right side of \eqref{new} is 
\begin{equation}\label{rhs}
\bigoplus_{\sigma\colon K \to F} \cc_K \otimes_{K,\sigma} \left(\det_F(t_{\Gamma,\sigma}(1)) \otimes_F
\det_F(\hom_F(t_{\Gamma^{\vee}},F)_{\sigma})\right)
\end{equation}
as $\sigma$ varies through the $\qq_p$-embeddings of $K$ into $F$. 

Let $\theta_{\chi}\colon G_K^{\rm{ab}} \to \oo_F^{\times}$
correspond to a map extending $\chi|_{\oo_K^{\times}}$ via $r_K$, so it suffices
 to prove that $\psi$ and $\theta_{\chi}$ coincide
on an open subgroup of $\oo_K^{\times}$.  It is equivalent to say
that the ratio of these $\oo_F^{\times}$-valued Hodge--Tate characters has finite image
on inertia, or in other words that 
their $\cc_K$-scalar extensions (over $\qq_p$) are $(\cc_K \otimes_{\qq_p} F)$-linearly 
and $G_K$-equivariantly isomorphic. 
In other words, it suffices to prove that $\cc_K \otimes_{\qq_p} \theta_{\chi}$
is $(\cc_K \otimes_{\qq_p} F)$-linearly and $G_K$-equivariantly isomorphic to \eqref{rhs}.
It is harmless to replace $G_K$-equivariance with $H$-equivariance for
an open subgroup $H$, such as the Galois group of $\overline{K}$
over the Galois closure $F'$ of $F/\qq_p$.

Our remaining task is to compute the Hodge--Tate weights of
the $\cc_K$-semilinear $G_{F'}$-representation $\cc_K \otimes_{j,F} \theta_{\chi}$
for each $\qq_p$-embedding $j\colon F \to \cc_K$.  
For any such $j$ the image $j(F)$ contains $K$ and so induces
a $\qq_p$-embedding of $K$ into $F$.  
Since we use covariant Fontaine functors,
the Hodge--Tate weight of $\qq_p(n)$ is $-n$
($B_{\rm{HT}}(n)$ has its $G_K$-invariants occurring in degree $-n$). 
It therefore suffices to prove that for each $\qq_p$-embedding 
\mbox{$\sigma\colon K \to F$}, the $K$-dimension of
the $\sigma$-isotypic part of the 
$(K \otimes_{\qq_p} F)$-module ${\rm{gr}}^n(D_{\rm{dR}}(\theta_{\chi}))$ vanishes
for $n \ne n_{\sigma} := -\dim_F t_{\Gamma,\sigma}$
and is $[F:K]$ for $n = n_{\sigma}$.  This means precisely 
that the $\sigma$-isotypic part of
$D_{\rm{dR}}(\theta_{\chi})$ is 1-dimensional over $F$ with its unique
nonzero ${\rm{gr}}^n$ occurring for $n = n_{\sigma}$.  

Combining these assertions over all $\sigma$, our task is to prove
that $D_{\rm{dR}}(\theta_{\chi})$ free
 of rank 1 over $K \otimes_{\qq_p} F$ and the 
$\sigma$-isotypic part of 
${\rm{gr}}^{\bullet}(D_{\rm{dR}}(\theta_{\chi}))$
is supported in degree $-\dim_F t_{\Gamma,\sigma}$.
By \cite[Prop.\,A.3]{conrad},
$\theta_{\chi}$ is crystalline
and $D_{\rm{crys}}(\theta_{\chi})$ is invertible as
a $(K_0 \otimes_{\qq_p} F)$-module.  Extending
scalars by $K_0 \to K$ yields
$D_{\rm{dR}}(\theta_{\chi})$, so the invertibility over
$K \otimes_{\qq_p} F$ holds.

It remains
to prove that the degree of the $\sigma$-isotypic $F$-line in 
${\rm{gr}}^{\bullet}(D_{\rm{dR}}(\theta_{\chi}))$ is equal to $-\dim_F t_{\Gamma,\sigma}$
for each $\sigma\colon K \to F$. 
Recall that by definition, $\chi\colon K^{\times} \to F^{\times}$ encodes
the $K$-action on the $F$-line 
\[\det_F(t_{\Gamma})^{-1} = \bigotimes_{\sigma\colon K \to F} \det_F(t_{\Gamma,\sigma})^{-1}.\]
In other words, for any $c \in K^{\times}$, $\chi(c) = \prod_{\sigma} \sigma(c)^{n_{\sigma}}$
as a product of $F^{\times}$-valued characters,
i.e.\ $\theta_{\chi} = \otimes_{\sigma} \theta_{\sigma}^{\otimes n_{\sigma}}$
where (i) $\theta_{\sigma}\colon G_K^{\rm{ab}} \to \oo_F^{\times}$
extends $r_K(u) \mapsto \sigma(u)$ for
$u \in \oo_K^{\times}$, and (ii) 
the tensor product is formed as 1-dimensional $F$-linear representations. 
Since we are using covariant Fontaine 
functors, $D_{\rm{dR}}(\theta_{\chi}) \simeq \otimes_{\sigma} D_{\rm{dR}}(\theta_{\sigma})^{\otimes n_{\sigma}}$
where the tensor product is formed over $K \otimes_{\qq_p} F$ and the definition of the
$F$-linear $K$-action on the $\sigma$-factor via
$\sigma\colon K \to F$. 

By Serre \cite[App.\,III, A.4]{serreapp}, the representation
$\theta_{\sigma}^{-1}$ corresponds to the scalar extension along $\sigma$
of a Lubin--Tate group $G_{\pi}$ over $\oo_K$ arising
from a uniformizer $\pi$ of $K$.   
The associated filtered $K$-vector space
$D_{\rm{dR}}(\vv_p(G_{\pi}))$ has 
${\rm{gr}}^{-1}$ of dimension 1
and ${\rm{gr}}^0$ of dimension
$[K:\qq_p]-1$ (since $G_{\pi}$ is 1-dimensional $p$-divisible group 
of height $[K:\qq_p]$ over $\oo_K$, and $\cc_K \otimes D_{\rm{dR}} = D_{\rm{HT}}$).

Using the $G_K$-equivariant $K$-linear structure on $\vv_p(G_{\pi})$,
view $D_{\rm{dR}}(\vv_p(G_{\pi}))$
as a filtered $(K \otimes_{\qq_p} K)$-module with
the left tensor factor encoding the $K$-linear
structure on $D_{\rm{dR}}$ (arising from $B_{\rm{dR}}$)
and the right tensor factor encoding the $K$-action arising
from $\vv_p(G_{\pi})$.  

\begin{lm} As a $(K \otimes_{\qq_p} K)$-module, 
$D_{\rm{dR}}(\vv_p(G_{\pi}))$ is free of rank $1$.
\end{lm}

\begin{proof}
The comparison isomorphism 
$B_{\rm{dR}} \otimes_K D_{\rm{dR}}(\vv_p(G_{\pi})) \simeq
B_{\rm{dR}} \otimes_{\qq_p} \vv_p(G_{\pi})$
has target that is visibly faithful over $K \otimes_{\qq_p} K$.
Hence, $D_{\rm{dR}}(\vv_p(G_{\pi}))$ is a faithful
$(K \otimes_{\qq_p} K)$-module, so 
by $K$-dimension reasons (for the left tensor structure)
it is free of rank 1.
\end{proof}

Using the $K$-structure for the right tensor factor,
$D_{\rm{dR}}(\theta_{\sigma}^{-1}) = D_{\rm{dR}}(\vv_p(G_{\pi})) \otimes_{K,\sigma} F$.
Thus, by the lemma, $D_{\rm{dR}}(\theta_{\sigma})$
is an invertible $(K \otimes_{\qq_p} F)$-module
equipped with a linear filtration whose associated graded
module is supported in degrees $1$ and $0$
with the term in degree $1$ equal to the $\sigma$-isotypic $F$-line
and the term in degree 0 equal to the span of the isotypic $F$-lines for
the other $\qq_p$-embeddings of $K$ into $F$. 
In particular, the $(K \otimes_{\qq_p} F)$-linear structure
canonically splits the filtration (via the decomposition into isotypic $F$-lines
for the $K$-action), so we may and do view $D_{\rm{dR}}(\theta_{\sigma})$
as a graded $(K \otimes_{\qq_p} F)$-module (equipped with the associated
tautologous filtration). 
Hence, $D_{\rm{dR}}(\theta_{\sigma})^{\otimes n_{\sigma}}$ is
an invertible $(K \otimes_{\qq_p} F)$-line equipped
with a linear grading such that the 
$\sigma$-isotypic $F$-line is in degree $n_{\sigma}$
and whose other isotypic $F$-lines are in degree 0
(since the factor rings of $K \otimes_{\qq_p} F$ are pairwise
orthogonal).   

Finally, 
the $(K \otimes_{\qq_p} F)$-linear tensor product over all $\sigma$
gives that $D_{\rm{dR}}(\theta_{\chi})$ is an invertible
$(K \otimes_{\qq_p} F)$-module equipped
with a linear grading such that its
$\sigma$-isotypic $F$-line is the tensor product
of the $\sigma$-isotypic $F$-line
in $D_{\rm{dR}}(\theta_{\sigma})^{\otimes n_{\sigma}}$
(which occurs in degree $n_{\sigma}$)
and the $\sigma$-isotypic $F$-lines
in the $D_{\rm{dR}}(\theta_{\tau})^{\otimes n_{\tau}}$'s
for $\tau \ne \sigma$ (which all occur in degree 0). 
To summarize, for every $\sigma$
the $\sigma$-isotypic $F$-line in
${\rm{gr}}^{\bullet}(D_{\rm{dR}}(\theta_{\chi}))$ occurs in
degree $n_{\sigma} = -\dim_F t_{\Gamma,\sigma}$, as desired.

\end{document}